\newcommand{\R}{{\mathbb R}}
\newcommand{\vol}{{\operatorname{vol}}}
\newcommand{\Z}{{\mathcal Z}}
\newcommand{\A}{{\mathcal A}}
\newcommand{\eps}{{\epsilon}}
\newcommand{\K}{{\mathcal K}}
\newcommand{\I}{{\mathcal I}}
\newcommand{\C}{{\mathbb C}}
\newcommand{\tr}{{\textup{tr}}}
\newcommand{\field}{{\mathbb K}}
\newtheorem{thm}{Theorem}
\newtheorem{prop}{Proposition}
\newtheorem{lemma}{Lemma}
\theoremstyle{definition}
\newtheorem{defini}{Definition}
\theoremstyle{remark}
\newtheorem{remark}{Remark}
\title[Beurling-Landau's density]{Beurling-Landau's density on compact manifolds}
\author{Joaquim Ortega-Cerd\`a}
\address{Dept.\ Matem\`atica Aplicada i An\`alisi,
 Universitat  de Barcelona,
Gran Via 585, 08007 Bar\-ce\-lo\-na, Spain}
\email{jortega@ub.edu}
\author{Bharti Pridhnani}
\address{Dept.\ Matem\`atica Aplicada i An\`alisi,
 Universitat  de Barcelona,
Gran Via 585, 08007 Bar\-ce\-lo\-na, Spain}
\email{bharti.pridhnani@ub.edu}
\thanks{Supported by the project MTM2008-05561-C02-01 and the CIRIT grant
2009SGR-1303}
\date{\today}
\subjclass[2000]{35P99, 58C35, 58C40}
\keywords{Beurling-Landau density, Interpolation, Marcinkiewicz-Zygmund, Fekete points}
\begin{document}

\maketitle

\begin{abstract}
Given a compact Riemannian manifold $M$, we consider the subspace of $L^2(M)$ 
generated by the eigenfunctions of the Laplacian of eigenvalue less than $L\geq 1$. 
This space behaves like a space of polynomials and we have an analogy with the 
Paley-Wiener spaces. We study the interpolating and Marcienkiewicz-Zygmund (M-Z)
families and provide necessary conditions for sampling and interpolation in terms 
of the Beurling-Landau densities. As an application, we prove the equidistribuition 
of the Fekete arrays on some compact manifolds.
\end{abstract}

\section*{Introduction}
Let $(M,g)$ be a smooth, connected, compact Riemannian manifold without
boundary, of dimension $m\geq 2$. Let $dV$ and $\Delta_M$ be the volume element 
and the Laplacian on $M$ associated to the metric $g$, respectively. The Laplacian 
is given in local coordinates by
\[
\Delta_{M}f=\frac{1}{\sqrt{|g|}}\sum_{i,j}\frac{\partial}{\partial
x_{i}}\left(\sqrt{|g|}g^{ij}\frac{\partial f}{\partial
x_{j}}\right),
\]
where $|g|=|\textup{det}(g_{ij})|$ and $(g^{ij})_{ij}$ is the inverse matrix
of $(g_{ij})_{ij}$. Since $M$ is compact, $g_{ij}$ and all its
derivatives are bounded and we assume that the metric
$g$ is non-singular at each point of $M$.\\
\\
By the compactness of $M$, the spectrum of the Laplacian is discrete
and there is a sequence of eigenvalues
\[
0\leq \lambda^2_{1}\leq\lambda^2_{2}\leq\cdots \to \infty
\]
and an orthonormal basis $\phi_{i}$ of smooth real eigenfunctions of the
Laplacian i.e. $\Delta_{M}\phi_{i}=-\lambda^2_{i}\phi_{i}$. Thus, $L^{2}(M)$ 
decomposes into an orthogonal direct sum of eigenfunctions of the Laplacian.\\
\\
We consider the following subspaces of $L^{2}(M)$.
\[
E_{L}=\left\{f\in L^{2}(M)\ :\
f=\sum^{k_{L}}_{i=1}\beta_{i}\phi_{i},\
\Delta_{M}\phi_{i}=-\lambda^2_{i}\phi_{i},\ \lambda_{k_{L}}\leq
L\right\},
\]
where $L\geq 1$ and $k_{L}=\dim E_{L}$. $E_L$ consists of functions in 
$L^2(M)$ with a restriction on the support of its Fourier transform. It is, in a 
sense, the Paley-Wiener space on $M$ with bandwidth $L$.\\
\\
The goal of this work is to extend the theory of Beurling-Landau on the 
discretization of functions in the Paley-Wiener space on $\R^n$ to functions 
in $M$. This should be possible because there is already a literature on the 
subject in the case $M=\mathbb S^m$ (see \cite{Mar} for more details). 
In the present work, we study the interpolating and Marcinkiewicz-Zygmund 
families for the spaces $E_L$. We prove some basic facts about them 
and give necessary conditions in terms of the 
Beurling-Landau's density. More precisely, our main result is:
\begin{thm}\label{densitythmintro}
Let $\Z$  be a triangular family in $M$. If $\Z$ is an $L^2$-\textup{M-Z} family 
then there exists a uniformly separated $L^2$-\textup{M-Z} family 
$\tilde{\Z}\subset \Z$ such that
\[
D^{-}(\tilde{\Z})\geq 1.
\]
If $\Z$ is an $L^2$-interpolating family then it is uniformly separated and
\[
D^{+}(\Z)\leq 1,
\]
where $D^{+}$ and $D^{-}$ are the upper and lower Beurling-Landau's 
density (see Definition \ref{defBeurlingLandauDensity} for more details), 
respectively,
\end{thm}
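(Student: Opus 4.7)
The plan is to adapt the classical Beurling--Landau argument from the Paley--Wiener space on $\R^m$ to the spaces $E_L$, exploiting the fact that at scale $1/L$ the manifold $M$ is essentially Euclidean and the reproducing kernel $K_L(x,y)$ of $E_L$ behaves like the Dirichlet kernel of $\R^m$. The three preliminary ingredients I would collect are: (i) Weyl's law, giving $k_L = \dim E_L \sim c_m\,\vol(M)\,L^m$; (ii) the on-diagonal asymptotic $K_L(x,x) \sim k_L/\vol(M) \sim c_m L^m$; and (iii) off-diagonal decay of the form $|K_L(x,y)|\lesssim L^m(1+L\,\dist(x,y))^{-N}$ for every $N$, inherited from H\"ormander's estimates on the spectral function. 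These three facts replace translation invariance by approximate translation invariance at the local scale and allow local computations to be transferred between $M$ and $\R^m$.

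For the M-Z half, I would first extract a uniformly separated subfamily $\tilde\Z\subset\Z$. A Bernstein-type inequality deduced from (ii)--(iii) shows that any $f\in E_L$ is nearly constant on balls of radius $\ll 1/L$, so a greedy thinning of $\Z_L$ at scale $\eps/L$ removes only nearly redundant points and preserves the lower M-Z constant. Once $\tilde\Z$ is uniformly separated, the density bound $D^-(\tilde\Z)\geq 1$ comes from Landau's concentration operator. For a geodesic ball $B=B(x,R/L)$, let $T_x = P_{E_L}\,M_{\chi_B}\,P_{E_L}$ act on $E_L$; its trace equals $\int_B K_L(y,y)\,dV(y)$ and is asymptotic to $\vol(B)\,k_L/\vol(M)\sim c'_m R^m$ by (i)--(ii). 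The off-diagonal decay (iii) forces the eigenvalues of $T_x$ to cluster near $0$ and $1$ with a transition zone of lower order in $R$, so the number of eigenvalues exceeding $1/2$ is roughly $c'_m R^m$. On the other hand, the M-Z inequality applied to functions in the top spectral subspace of $T_x$, which are concentrated in $B$, bounds this number above by $|\tilde\Z\cap B(x,(R+C)/L)|$. Dividing by $R^m$ and letting $R\to\infty$ after passing through the triangular limit yields $D^-(\tilde\Z)\geq 1$.

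For an interpolating family, uniform separation follows directly from the Bernstein-type inequality and the interpolation norm bound: if two points $z,z'\in\Z_L$ were at distance $o(1/L)$, the interpolant with data $1$ at $z$ and $0$ at $z'$ would contradict the fact that $|f(z)-f(z')|\lesssim L\,\dist(z,z')\,L^{m/2}\,\|f\|_{L^2}$. The upper density $D^+(\Z)\leq 1$ is then Landau's bound run in the dual direction: $|\Z\cap B(x,R/L)|$ is bounded above by the number of eigenvalues of $T_x$ not close to $0$ (on the orthogonal subspace, the interpolation norm would blow up on any eigenfunction), and this count is again controlled by the trace of $T_x$ up to a boundary error of order $R^{m-1}$.

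The main obstacle, technically, is controlling the transition region in the spectrum of $T_x$ on the manifold. On $\R^m$ this is Landau's celebrated asymptotics for time--frequency localisation operators and it relies on sharp estimates of the Paley--Wiener kernel; on $M$ the kernel $K_L$ is known only asymptotically, so one needs off-diagonal bounds uniform down to scales $1/L$ together with a trace-class estimate on the commutator $[P_{E_L},M_{\chi_B}]$ that is of order $R^{m-1}$, negligible against the main term $R^m$. A secondary but genuine difficulty is that the triangular structure of $\Z=\{\Z_L\}$ makes the geometric scale $L$-dependent, so every estimate must be quantitative and uniform in $L$ before the correct iterated limit ($L\to\infty$ then $R\to\infty$) can be taken to produce the Beurling--Landau densities.
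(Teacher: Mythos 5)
Your outline is morally on the right track — it is indeed Landau's concentration-operator method, which is what the paper does — but your ingredient (iii) is where the argument breaks, and this is precisely the obstacle the paper is designed to overcome. You assert that the reproducing kernel itself satisfies
\[
|K_L(x,y)| \lesssim L^m\bigl(1 + L\,d(x,y)\bigr)^{-N} \quad\text{for every }N,
\]
``inherited from H\"ormander's estimates,'' and the rest of your argument (eigenvalue clustering of $T_x = P_{E_L}M_{\chi_B}P_{E_L}$, the commutator estimate of order $R^{m-1}$, the claim that top eigenfunctions of $T_x$ are spatially concentrated in $B$) leans entirely on this decay. But this estimate is false in general. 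What H\"ormander and the subsequent literature give for the sharp spectral projector $K_L = \sum_{\lambda_i\le L}\phi_i\otimes\phi_i$ on a generic compact manifold is decay of order $(1+Ld(x,y))^{-(m-1)/2}$ at best, coming from stationary phase — nowhere near arbitrary polynomial decay. Arbitrary polynomial (indeed faster-than-any-polynomial) decay is available only for mollified spectral sums $\sum H(\lambda_i/L)\phi_i\otimes\phi_i$ with $H$ smooth and compactly supported in $[0,1]$; this is the content of Lemma~\ref{kernelestimates} in the paper, and the indicator function $\chi_{[0,1]}$ giving $K_L$ does not qualify. Without (iii), your trace-class commutator bound and your ``transition zone of lower order'' are unsupported.

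The paper resolves exactly this by replacing $\K^L_A = P_{E_L}M_{\chi_A}P_{E_L}$ with the modified concentration operator $T^\eps_{L,A} = B^\eps_L\,M_{\chi_A}\,B^\eps_L$, where $B^\eps_L$ is a smooth Bochner--Riesz-type approximation to $P_{E_L}$ (a mollified projection that agrees with the identity on $E_{L(1-\eps)}$ and has the desired off-diagonal decay). The trace estimates, the $\tr(T)-\tr(T^2)$ bound (Proposition~\ref{traceestimategeneral}), and the localization Lemmas~\ref{construcciocassampling} and~\ref{construcciocasinterpolacio} are then proved for $T^\eps_{L,A}$; one pays for the mollification with factors $(1\pm\eps)^m$, and these are removed by letting $\eps\to 0$ at the end. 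In the interpolating case there is a further wrinkle: the paper works with $T^\eps_{L(1+\rho),A}$, exploiting that $B^\eps_{L(1+\rho)}|_{E_L}$ is invertible, and the resulting lower eigenvalue bound $\delta = C\beta_\eps^2(1/(1+\rho))$ degenerates as $\eps\to0$ and $\rho\to 0$ in a way that has to be balanced against the $(1+\rho)^m$ and $(1-(1-\eps)^m)$ factors. If you want to salvage your outline, replace $T_x$ throughout by the mollified operator and redo the eigenvalue-counting lemmas with the smoothed kernel; as written, the argument cannot be completed.
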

In the last section, we study the Fekete families for the spaces $E_L$. Fekete points 
are the points that maximize a Vandermonde-type determinant that appears in the 
polynomial Lagrange interpolation formula. We show their connection with the 
interpolating and M-Z families and prove the asymptotic equidistribution of the Fekete 
points on some compact manifolds. Our main result in this direction is:
\begin{thm}
Let $M$ be an admissible manifold and $\Z=\left\{\Z(L)\right\}_{L\geq 1}$ be any 
array such that $\Z(L)$ is a set of Fekete points of degree $L$. Consider the measures 
$\mu_L=\frac{1}{k_L}\sum^{k_L}_{j=1}\delta_{z_{Lj}}$. Then 
$\mu_L$ converges in the weak-$\ast$ topology to the normalized volume measure on 
$M$.
\end{thm}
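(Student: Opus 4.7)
The plan follows the strategy of Marzo and Ortega-Cerd\`a on the sphere: show that any Fekete array is simultaneously $L^2$-interpolating and $L^2$-M-Z, invoke Theorem \ref{densitythmintro} to pin the Beurling-Landau densities to the critical value $1$, and then turn this numerical rigidity into the desired weak-$\ast$ convergence of the counting measures.

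First I would verify the interpolating property. Writing $\Z(L)=\{z_{Lj}\}_{j=1}^{k_L}$ for a Fekete array and $\ell_j\in E_L$ for the Lagrange functions determined by $\ell_j(z_{Li})=\delta_{ij}$, the maximality of $|\det(\phi_i(z_j))|$ immediately gives $|\ell_j(z)|\leq 1$ for every $z\in M$, since substituting $z$ for $z_{Lj}$ in the $j$-th row can only decrease the determinant. Combined with the Nikolskii--Bernstein inequalities furnished by the admissibility hypothesis, this pointwise bound yields the $L^2$-interpolating property.

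For the M-Z estimate I would work with the reproducing kernel $K_L(z,w)=\sum_i\phi_i(z)\phi_i(w)$ of $E_L$, which integrates to $k_L$ over $M$ and, on an admissible manifold, is expected to satisfy $K_L(z,z)\sim k_L/\vol(M)$ uniformly in $z$. A standard extremal comparison exploiting the Fekete maximality should then yield a two-sided bound
\[
C^{-1}\|f\|_2^2\leq\frac{\vol(M)}{k_L}\sum_{j=1}^{k_L}|f(z_{Lj})|^2\leq C\|f\|_2^2,\qquad f\in E_L,
\]
which is the $L^2$-M-Z property. Applying Theorem \ref{densitythmintro} then gives $D^+(\Z)\leq 1$ and $D^-(\tilde\Z)\geq 1$ for some uniformly separated subarray $\tilde\Z\subset\Z$. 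Since $|\Z(L)|=k_L$ agrees by Weyl's law with the count normalized to density $1$, these two inequalities must saturate: $D^+(\Z)=D^-(\Z)=1$ and $|\tilde\Z(L)|/|\Z(L)|\to 1$.

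The final step is to translate $D^+=D^-=1$ into weak-$\ast$ convergence. Uniform separation together with matching upper and lower densities forces $|\Z(L)\cap B|=(1+o(1))k_L\vol(B)/\vol(M)$ for every small geodesic ball $B\subset M$, and approximating an arbitrary continuous function by step functions supported on such balls gives $\mu_L\rightharpoonup dV/\vol(M)$. The most delicate step is the M-Z bound: the maximality of the Vandermonde is a pointwise statement about Lagrange functions, and upgrading it to a quantitative $L^2$ frame inequality requires both the Bernstein--Markov inequality and the diagonal kernel asymptotics built into the admissibility hypothesis. A secondary difficulty lies in the density-to-measure conversion, where one has to match the scale over which the Beurling-Landau density is averaged with the resolution needed to test against continuous functions on $M$.
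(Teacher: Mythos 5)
Your high-level plan (connect Fekete arrays to interpolating/M-Z families, invoke the density theorem, convert to weak-$\ast$ convergence) is the right strategy, but the central step is not carried out correctly and the gap is exactly at the point you yourself flag as delicate.

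The claim that the Fekete array $\Z$ is \emph{itself} simultaneously $L^2$-interpolating and $L^2$-M-Z is not established by the paper, nor does your sketch supply a proof. The pointwise bound $|\ell_j(z)|\leq 1$ only gives $|\sum_j c_j\ell_j(z)|\leq\sum_j|c_j|$, which is off from the desired $L^2$ control by a factor of order $k_L$; Bernstein inequalities alone do not repair this, since they control oscillation, not the global overlap of the $\ell_j$'s. Similarly, for the M-Z lower bound you would need to show that $f\mapsto(f(z_{Lj}))_j$ is bounded below on $E_L$, and no ``standard extremal comparison'' with the diagonal of $K_L$ achieves this without a localization mechanism. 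The paper's fix is to \emph{dilate the degree}: it proves that the arrays $\Z_\eps(L)=\Z([(1+\eps)L])$ are M-Z and $\Z_{-\eps}(L)=\Z([(1-\eps)L])$ are interpolating, for every $\eps>0$. The crucial technical device is a weighted Lagrange interpolation formula $f_L(z)=\sum_j p(z,z_{L_\eps j})f_L(z_{L_\eps j})\,l_{L_\eps j}(z)$, where the weight $p(z,\cdot)\in E_{L\eps/C}$ is built from the smoothed Bochner--Riesz kernel $B^\delta_{L\eps/C}$ and therefore decays rapidly off the diagonal while satisfying $\|p(z,\cdot)\|_1\lesssim 1/k_L$. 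It is exactly here that the admissibility (product property $E_L\cdot E_{\eps L}\subset E_{L(1+C\eps)}$) enters: it guarantees the weighted product $f_L\cdot p(z,\cdot)$ stays in a controlled space $E_{L_\eps}$ where the Lagrange functions for $\Z(L_\eps)$ make sense. Note that admissibility in this paper is a multiplicative-closure property, not a Bernstein/Nikolskii statement, so the phrase ``Nikolskii--Bernstein inequalities furnished by the admissibility hypothesis'' conflates two different ingredients. After this, the density theorem gives $D^\pm$ of the dilated arrays close to $1$, and letting $\eps\to 0$ pins the asymptotics of $\mu_L(B(\xi,R/L))/\sigma(B(\xi,R/L))$. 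Your final density-to-measure conversion is reasonable in outline (the paper does it carefully by mollifying continuous $f$ over balls of radius $r_L=R/L$ and using Fubini together with volume comparison estimates for geodesic versus Euclidean balls), but as written your proposal never actually obtains the needed hypotheses, because the Fekete array itself is not shown to be M-Z or interpolating.
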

For the precise definition of an admissible manifold see Definition \ref{defadmissible}. 
The key idea in proving the above theorem is the necessary condition for the interpolating and 
Marcienkiewicz-Zygmund arrays in terms of the Beurling-Landau densities given 
by Theorem \ref{densitythmintro}.\\
\\
In what follows, when we write $A\lesssim B$, $A\gtrsim B$ or $A\simeq B$,
we mean that there are constants depending only on the manifold such that 
$A\leq C B$, $A\geq CB$ or $C_{1}B\leq A\leq C_{2}B$, respectively. Also, the 
value of the constants appearing during a proof may change but they will be still 
denoted with the same letter. A geodesic ball in $M$ and an Euclidean ball in $\R^m$ 
are represented by $B(\xi,r)$ and $\mathbb B(z,r)$, respectively.

\section{Kernels associated to $E_L$}\label{estimatesReprKernel}
Let
\[
K_{L}(z,w):=\sum^{k_{L}}_{i=1}\phi_{i}(z)\phi_{i}(w)=\sum_{\lambda_{i}\leq
L}\phi_{i}(z)\phi_{i}(w).
\]
This function is the reproducing kernel of the space $E_L$, i.e.
\begin{align*}
\forall f\in E_L\quad f(z)=\langle f,K_L(z,\cdot)\rangle.
\end{align*}
Note that $\dim(E_L)=k_{L}=\#\left\{\lambda_{i}\leq L\right\}.$ The function
$K_{L}$ is also called the spectral function associated to the Laplacian.
H\"ormander in \cite{Horm}, proved the following estimates.
\begin{enumerate}
  \item $K_{L}(z,z)=\frac{\sigma_{m}}{(2\pi)^{m}}L^{m}+O(L^{m-1})$
(uniformly in $z\in M$), where $\sigma_{m}=\frac{2\pi^{m/2}}{m\Gamma(m/2)}$.\\
  \item $k_{L}=\frac{\vol(M)\sigma_{m}}{(2\pi)^{m}}L^{m}+O(L^{m-1})$.
\end{enumerate}
In fact, in \cite{Horm}, there are estimates for the
spectral function associated to any elliptic operator of order $n\geq 1$ with
constants depending only on the manifold.\\
\\
So, for $L$ big enough we have $k_{L}\simeq L^{m}$ and
\[
\left\|K_{L}(z,\cdot)\right\|^{2}_{2}=K_{L}(z,z)\simeq L^{m}\simeq k_{L}
\]
with constants independent of $L$ and $z$.\\
\\
We will also use the Bochner-Riesz Kernel associated to the Laplacian that is defined as
\[
S^{N}_{L}(z,w):=\sum^{k_L}_{i=1}\left(1-\frac{\lambda_{i}}{L}\right)^{N}\phi_{i}(z)\phi_{i}(w).
\]
Here $N\in\mathbb N$ is the order of the kernel. Using the definition, one has that 
for all $g\in L^2(M)$, the Bochner-Riesz transform of $g$ is
\[
S^{N}_L(g)(z)=\int_{M}S^{N}_L(z,w)g(w)dV(w)=
\sum^{k_L}_{i=1}\left(1-\frac{\lambda_i}{L}\right)^Nc_i\phi_i(z)\in E_L,
\]
where $c_i=\langle g,\phi_i\rangle$. Observe that $\|S^N_L(g)\|_2\leq \|g\|_2$.\\
\\
Note that $S^{0}_{L}(z,w)=K_{L}(z,w)$. The Bochner-Riesz Kernel satisfies the following 
estimate.
\begin{equation}\label{RieszKernelEstimate}
|S^{N}_L(z,w)|\leq C_NL^{m}\left(1+Ld(z,w)\right)^{-N-1},
\end{equation}
where $C_N$ is a constant depending on the manifold and the order $N$.
This estimate has its origins in H\"ormander's article \cite[Theorem
5.3]{HormRiesz}. Estimate \eqref{RieszKernelEstimate} can be found also in
\cite[Lemma 2.1]{Sogge}.\\
\\
Note that on the diagonal, $S^{N}_{L}(z,z)\simeq C_{N}L^{m}$. The upper
bound is trivial by the definition and the lower bound follows from
\[
 S^N_{L}(z,z)\ge \sum_{\lambda_i\leq L/2}\left(1-\frac{\lambda_{i}}{L}\right)^{N}
\phi_{i}(z)\phi_{i}(z)\ge 2^{-N}K_{L/2}(z,z)\simeq C_{N}L^{m}.
\]
Similarly we observe that $\left\|S^{N}_{L}(\cdot,\xi)\right\|^{2}_{2}\simeq
C_{N}L^{m}$.\\
\\
We can consider other Bochner-Riesz type kernels. From now on, 
we fix an $\eps>0$ and $B^{\eps}_L$ will denote a transform from $L^{2}(M)$ 
to $E_L$ with kernel
\begin{equation}\label{defnewkernel}
B^{\eps}_L(z,w)=\sum^{k_L}_{i=1}\beta_{\eps}\left(\frac{\lambda_i}{L}\right)\phi_i(z)\phi_i(w),
\end{equation}
i.e.
\[
B^{\eps}_L(f)(z)=\int_{M}B^{\eps}_L(z,w)f(w)dV(w)
=\sum^{k_L}_{i=1}\beta_{\eps}\left(\frac{\lambda_i}{L}\right)\langle f,\phi_i\rangle \phi_i(z),
\]
where $\beta_{\eps}:[0,+\infty)\to [0,1]$ is a function of class $\mathcal C^{\infty}$ 
supported in $[0,1]$ such that $\beta_{\eps}(x)=1$ for $x\in [0,1-\eps]$ and $\beta_{\eps}(x)=0$ if 
$x\notin [0,1)$.
\[
\includegraphics[width=4.5cm]{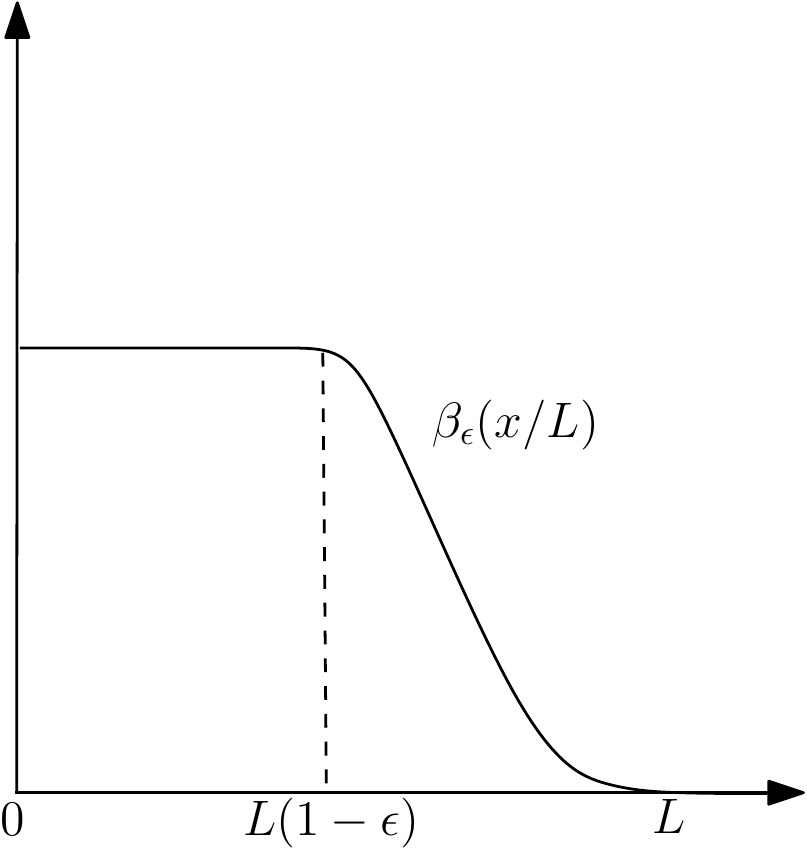}
\]
Observe that for $\eps=0$, the transform $B^0_L$ is just the orthogonal projection of 
the space $E_L$ i.e. the kernel $B^{0}_L(z,w)=K_L(z,w)$.\\
\\
We recall now an estimate for the kernel $B^{\eps}_L(z,w)$ that is similar to the Bochner-Riesz 
kernel estimate \eqref{RieszKernelEstimate}.
\begin{lemma}\label{kernelestimates}
Let $H:[0,+\infty)\to[0,1]$ be a function with continuous derivatives up to order $N\geq m$ 
with compact support in $[0,1]$. Then there exists a constant $C_N$ independent of $L$ 
such that
\begin{equation}\label{generalkernelestimate}
\left|\sum^{k_L}_{i=1}H(\lambda_i/L)\phi_{i}(z)\phi_i(w)\right|
\leq C_NL^{m}\frac{1}{(1+Ld(z,w))^{N}}, \quad \forall z,w\in M.
\end{equation}
\end{lemma}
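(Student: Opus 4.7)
The plan is to adapt the standard finite-propagation-speed argument used by H\"ormander in \cite{HormRiesz} to obtain \eqref{RieszKernelEstimate}, replacing the Bochner--Riesz symbol $(1-\lambda/L)^N_+$ by the general smooth symbol $H(\lambda/L)$.

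First I extend $H$ to an even function on $\R$ (still of class $C^N$, supported in $[-1,1]$) and denote by $\check H$ its inverse Fourier transform. Integrating by parts $N$ times yields
\[
|\check H(t)|\lesssim_N (1+|t|)^{-N}.
\]
By Fourier inversion and evenness,
\[
H(\lambda_i/L)=\int_{\R}\check H(t)\cos(t\lambda_i/L)\,dt,
\]
so that, after mollifying $H$ to legitimise the interchange of sum and integral,
\[
\sum_{i=1}^{k_L}H(\lambda_i/L)\phi_i(z)\phi_i(w)=\int_{\R}\check H(t)\,U_{t/L}(z,w)\,dt,
\]
where $U_{\tau}(z,w):=\sum_i\cos(\tau\lambda_i)\phi_i(z)\phi_i(w)$ is the Schwartz kernel of $\cos(\tau\sqrt{-\Delta_M})$.

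The essential geometric input is the finite propagation speed of the wave equation on the Riemannian manifold $M$: the distribution $U_\tau$ is supported in $\{(z,w):d(z,w)\leq|\tau|\}$. Fixing $(z,w)$ with $r:=d(z,w)$, the contributions from $|t|<Lr$ thus vanish. Combining the $|t|^{-N}$ decay of $\check H$ on the region $|t|\geq Lr$ with the Weyl-type bound $k_L\lesssim L^m$, which controls the size of $U_\tau$ tested against suitable test functions, one obtains
\[
\Bigl|\sum_{i=1}^{k_L}H(\lambda_i/L)\phi_i(z)\phi_i(w)\Bigr|\lesssim L^m(1+Lr)^{-N}.
\]
The hypothesis $N\geq m$ is used precisely to guarantee the convergence of the interchanged integral and series.

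The principal obstacle is making this last step rigorous: since $U_\tau$ is a distribution in $(z,w,t)$, the pairing above is only formal. The standard remedy, and the technical heart of the argument, is to prove the estimate first for the mollified symbols $H\ast\rho_\delta$, where $\rho_\delta$ is a smooth even bump at scale $\delta$, and to verify that the constants are uniform in $\delta$ before passing to the limit $\delta\to 0^+$. This is precisely the regularisation scheme carried out in \cite[Lemma 2.1]{Sogge} for the Bochner--Riesz symbol, and it extends without essential change to the present smooth compactly supported $H$.
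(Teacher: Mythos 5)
The paper does not prove this lemma itself; it cites \cite[Theorem 2.1]{FM-Bernstein}, where Filbir and Mhaskar derive the off-diagonal decay from Gaussian upper bounds on the heat kernel $e^{t\Delta_M}$, together with a representation of a general compactly supported $H$ in terms of the heat semigroup. Your wave-equation route is a genuinely different scheme --- the classical one of H\"ormander and Sogge --- so the mismatch in method is not by itself a problem.

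The gap is in the step you flag as the ``technical heart,'' and your proposed remedy addresses the wrong difficulty. Mollifying $H$ is unnecessary: $H$ is already $C^N$ with compact support, so $\check H$ is already $C^\infty$ with decay $O((1+|t|)^{-N})$, and the pairing $\int \check H(t)\,U_{t/L}(z,w)\,dt$ is already well defined as a pairing of a tempered distribution of finite order (namely $\tau\mapsto U_\tau(z,w)$) with a rapidly decaying smooth function. The actual obstruction is that for $z\neq w$ the half-wave kernel $U_\tau(z,w)$ is genuinely singular along the light cone $|\tau|=d(z,w)$, and the crude Weyl bound $k_L\lesssim L^m$ gives no control on that singularity whatsoever. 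What Sogge's Lemma 2.1 really rests on is the Hadamard/H\"ormander parametrix of $\cos(\tau\sqrt{-\Delta_M})$ for $|\tau|$ below the injectivity radius, which writes $U_\tau(z,w)$ modulo smoothing as an explicit oscillatory integral $\int_{\R^m}e^{i\langle \exp^{-1}_z w,\xi\rangle}\cos(\tau|\xi|)\,a(z,w,\xi)\,d\xi$; repeated integration by parts in $\xi$, combined with the decay of $\check H$, produces both the $L^m$ prefactor (from the $m$-dimensional $\xi$-integration, not from $k_L\lesssim L^m$) and the $(1+Ld(z,w))^{-N}$ decay, while the region where $|\tau|$ exceeds the injectivity radius is treated separately using the smallness of $\check H(L\tau)$ there. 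As written, your sketch gives no mechanism for bounding the distribution $U_\tau$ or for extracting the factor $L^m$, and mollification does not supply one; to close the argument you must either import the parametrix construction from \cite{Sogge} and \cite{HormRiesz}, or switch to the heat-kernel argument of \cite{FM-Bernstein} as the paper in fact does.
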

For a proof see \cite[Theorem 2.1]{FM-Bernstein} and some ideas can be traced back from 
\cite{Sogge}.

\section{Definitions and Notations}
Given $L\geq 1$ and $m_{L}\in\mathbb N$, we consider a triangular family of 
points in $M$, $\Z=\left\{\Z(L)\right\}_L$, denoted as
\[
\mathcal Z(L)=\left\{z_{Lj}\in M\ :\ 1\leq j\leq m_{L}\right\},L\geq 1,
\]
and we assume that $m_{L}\to \infty$ as $L$ increases.

\begin{defini}
A triangular family of points $\mathcal Z$ in $M$ is \textbf{uniformly separated} 
if there exists a positive $\epsilon$ such that for all $L\geq 1$
$$d_{M}(z_{Lj},z_{Lk})\geq\frac{\epsilon}{L},\ \ j\neq k,$$
where $\eps$ is called the separation constant of $\Z$.
\end{defini}

\begin{remark}
The natural separation is of order $1/L$ in view of Proposition \ref{interpsep} 
(see below) that shows that a necessary condition for interpolation is that the 
family should be uniformly separated with this order of separation. The key 
idea is the Bernstein's inequality:
\begin{equation}\label{BernsteinIneqforEL}
\|\nabla f_L\|_{\infty}\lesssim L\|f_L\|_{\infty},\quad \forall f_L\in E_L.
\end{equation}
This estimate has been proved recently in \cite[Theorem 2.2]{FM-Bernstein}. 
Thus, on balls of radius $1/L$, a bounded function of $E_L$ oscillates little. 
\end{remark}

\begin{defini}\label{definitionMZ}
Let $\mathcal Z=\left\{\Z(L)\right\}_{L\geq 1}$ be a triangular family in $M$ with 
$m_{L}\geq k_{L}$ for all $L$. Then $\Z$ is a 
$L^{2}$-\textbf{Marcinkiewicz-Zygmund} (\textup{M-Z}) family, if there exists a constant 
$C>0$ such that for all $L\geq 1$ and $f_{L}\in E_{L}$
$$\frac{C^{-1}}{k_{L}}\sum^{m_{L}}_{j=1}|f_{L}(z_{Lj})|^{2}
\leq \int_{M}|f_{L}|^{2}dV\leq\frac{C}{k_{L}}\sum^{m_{L}}_{j=1}|f_{L}(z_{Lj})|^{2}.$$
\end{defini}
\begin{remark}
The condition of being M-Z can be expressed in terms of the reproducing kernel of 
$E_L$: a family $\Z$ is M-Z if and only if the normalized reproducing kernels form 
a frame with uniform bounds in $L$, i.e.
\[
\sum^{m_L}_{j=1}|\langle f_L,\tilde{K}_L(z_{Lj},\cdot)\rangle|^2\simeq \|f_L\|^2_2,
\]
with constants independent of $L$, where 
$\tilde{K}_L(z,w)=\frac{K_L(z,w)}{\|K_L(z,\cdot)\|_2}$.
\end{remark}
\begin{defini}\label{definitionInterp}
Let $\mathcal Z=\left\{\Z(L)\right\}_{L\geq 1}$ be a triangular family in $M$ with 
$m_{L}\leq k_{L}$ for all $L$. Then $\Z$ is a $L^{2}$-\textbf{interpolating family} 
if for all family of values $c=\left\{c(L)\right\}_{L\geq 1}$,  
$c(L)=\left\{c_{Lj}\right\}_{1\leq j\leq m_{L}}$ such that
$$\sup_{L\geq 1}\frac{1}{k_{L}}\sum^{m_{L}}_{j=1}|c_{Lj}|^{2}<\infty,$$
there exists a sequence of functions $f_{L}\in E_{L}$ with
\[
\sup_{L\geq 1}\left\|f_L\right\|_{2}<\infty
\]
and $f_{L}(z_{Lj})=c_{Lj}$ ($1\leq j\leq m_{L}$). That is, 
$f_{L}(\Z(L))=c(L)$ for all $L\geq 1$.
\end{defini}
\begin{remark}
Equivalently, a family is interpolating if the normalized reproducing kernels form a 
Riesz sequence, i.e.
\[
\frac{1}{k_L}\sum_{j}|c_{Lj}|^2\simeq \|\sum_{j}c_{Lj}\tilde{K}_{L}(z_{Lj},\cdot)\|^2_2,
\]
with constants independent of $L$, whenever $c=\left\{c_{Lj}\right\}_{j,L}$ is a family 
satisfying
\[
\sup_{L}\frac{1}{k_L}\sum^{m_L}_{j=1}|c_{Lj}|^2<\infty.
\]
\end{remark}
Intuitively, a M-Z family should be \textit{dense} in order to recover the $L^2$-norm of functions 
of the space $E_L$ and an interpolating family should be \textit{sparse}.

We recall a result that is a Plancherel-P\'olya type inequality. For a proof, 
see \cite[Theorem 4.6]{OP}.

\begin{thm}[Plancherel-P\'olya Theorem]\label{unionsepfam}
Let $\Z$ be a triangular family of points in $M$, i.e. 
$\Z=\left\{z_{Lj}\right\}_{j\in\left\{1,\ldots,m_{L}\right\},L\geq 1}\subset
M$. Then $\Z$ is a finite union of
uniformly separated families, if and only if there exists a constant $C>0$ such
that for all $L\geq 1$ and $f_{L}\in E_{L}$,
\begin{equation}\label{PlancherelPolya}
\frac{1}{k_{L}}\sum^{m_{L}}_{j=1}|f_{L}(z_{Lj})|^{2}\leq
C\int_{M}|f_{L}(\xi)|^{2}dV(\xi).
\end{equation}
\end{thm}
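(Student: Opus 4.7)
The plan is to prove both directions using a local reverse $L^2$ estimate: there exists $c>0$ independent of $L$ such that
\[
|f_L(z)|^2\lesssim L^m\int_{B(z,c/L)}|f_L|^2\,dV,\qquad f_L\in E_L,\ z\in M.
\]
This is a consequence of Bernstein's inequality \eqref{BernsteinIneqforEL}: on balls of radius $c/L$ with $c$ small, $f_L$ varies by at most a fraction of its local sup-norm, so the value at the center is controlled by the local $L^2$-average times the inverse volume $\simeq L^m$.

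For the direction $(\Leftarrow)$, suppose $\Z$ decomposes as a finite union $\bigcup_{i=1}^{K}\Z_i$ of uniformly separated families. For each $i$, shrink $c$ further if necessary so that the balls $\{B(z_{Lj},c/L):z_{Lj}\in\Z_i(L)\}$ are pairwise disjoint. Summing the local reverse estimate over $\Z_i(L)$ gives
\[
\sum_{z_{Lj}\in\Z_i(L)}|f_L(z_{Lj})|^2\lesssim L^m\sum_j\int_{B(z_{Lj},c/L)}|f_L|^2\,dV\leq L^m\|f_L\|_2^2\simeq k_L\|f_L\|_2^2,
\]
and adding the contributions over $i=1,\dots,K$ yields \eqref{PlancherelPolya}.

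For the direction $(\Rightarrow)$, assume \eqref{PlancherelPolya}. The main step is to bound the cluster size uniformly: to find $c_0>0$ and $N\in\mathbb N$, independent of $L$ and $\xi\in M$, with $\#\{j:z_{Lj}\in B(\xi,c_0/L)\}\leq N$. To prove this, test \eqref{PlancherelPolya} against $f_L(\cdot):=B^{\eps}_L(\xi,\cdot)\in E_L$. Since $\beta_\eps\equiv 1$ on $[0,1-\eps]$, one has $B^\eps_L(\xi,\xi)\geq K_{(1-\eps)L}(\xi,\xi)\simeq L^m$, and also $\|f_L\|_2^2\leq B^\eps_L(\xi,\xi)\simeq L^m$. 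By Lemma \ref{kernelestimates}, $\|f_L\|_\infty\lesssim L^m$, and then \eqref{BernsteinIneqforEL} gives $\|\nabla f_L\|_\infty\lesssim L^{m+1}$. Hence, for $c_0$ sufficiently small, $|f_L(w)|\gtrsim L^m$ throughout $B(\xi,c_0/L)$. Substituting into \eqref{PlancherelPolya} and using $k_L\simeq L^m$,
\[
\frac{\#\{j:z_{Lj}\in B(\xi,c_0/L)\}\cdot L^{2m}}{k_L}\lesssim L^m,
\]
so the cluster size is bounded by a constant $N$ independent of $L$ and $\xi$. With this in hand, form the graph on $\Z(L)$ whose edges join points at distance $<c_0/L$: each vertex has degree $\leq N$, and a greedy coloring partitions $\Z(L)$ into at most $N+1$ independent sets, each uniformly separated with constant $c_0$. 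As $N$ is independent of $L$, this yields the required finite decomposition of $\Z$.

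The main obstacle is the construction of the test function in the direction $(\Rightarrow)$: we need a function in $E_L$ that is comparable to $L^m$ throughout a full ball of radius $c_0/L$ around $\xi$ while having $L^2$-norm only $\simeq L^{m/2}$. The raw reproducing kernel $K_L(\xi,\cdot)$ can oscillate at scale $1/L$, which is why we pass to the smoothed Bochner--Riesz-type kernel $B^{\eps}_L$; its off-diagonal decay from Lemma \ref{kernelestimates} delivers the uniform bound $\|B^{\eps}_L(\xi,\cdot)\|_\infty\lesssim L^m$ that makes Bernstein's inequality strong enough to keep $f_L$ essentially constant on balls of radius $c_0/L$. Once the cluster bound is secured, the combinatorial coloring step is routine.
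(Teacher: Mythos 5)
Your $(\Rightarrow)$ direction is essentially sound: testing \eqref{PlancherelPolya} against $f_L=B^{\eps}_L(\xi,\cdot)$, which satisfies $f_L(\xi)\simeq\|f_L\|_\infty\simeq L^m$ and $\|f_L\|_2^2\simeq L^m$, gives a uniform bound on $\#(\Z(L)\cap B(\xi,c_0/L))$, and the greedy-colouring step then produces a decomposition into boundedly many separated families. (Note that the paper does not itself prove this theorem; it defers to \cite[Theorem 4.6]{OP}.)

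The $(\Leftarrow)$ direction, however, rests on a step that is not only unjustified but false. You invoke the ``local reverse estimate''
\[
|f_L(z)|^2\;\lesssim\; L^m\int_{B(z,c/L)}|f_L|^2\,dV,\qquad f_L\in E_L,
\]
and attribute it to Bernstein's inequality \eqref{BernsteinIneqforEL}. But Bernstein controls the oscillation of $f_L$ on $B(z,c/L)$ in terms of the \emph{global} quantity $\|f_L\|_{L^\infty(M)}$, not $|f_L(z)|$: it gives $|f_L(w)-f_L(z)|\lesssim c\,\|f_L\|_{L^\infty(M)}$, which keeps $|f_L|$ comparable to $|f_L(z)|$ on the ball only when $z$ is a near-maximum of $|f_L|$. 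Worse, the stated inequality is not true for any fixed $c$: rescaling to the Paley--Wiener model on $\R$, such a bound would make evaluation at $0$ a bounded functional of the $L^2(-c,c)$ norm on $PW_{[-1,1]}$, hence representable by some $g\in L^2$ supported in $[-c,c]$ with $\hat g$ constant on $[-1,1]$; since $\hat g$ is entire, this forces $g$ to be a point mass. Equivalently, the restrictions to $(-c,c)$ of Paley--Wiener functions vanishing at the origin are dense in $L^2(-c,c)$, so one can make $\int_{-c}^c|f|^2$ arbitrarily small while keeping $f(0)=1$. The correct substitute, and what the standard Plancherel--P\'olya argument actually uses, is the tailed estimate
\[
|f_L(z)|^2\;\lesssim\; L^m\int_M\frac{|f_L(w)|^2}{\bigl(1+Ld(z,w)\bigr)^N}\,dV(w),
\]
obtained by writing $f_L=B_{2L}(f_L)$, applying Cauchy--Schwarz with weight $|B_{2L}(z,\cdot)|$, and using $\|B_{2L}(z,\cdot)\|_1\lesssim1$ together with the decay of Lemma \ref{kernelestimates}. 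Summing this over a uniformly separated family and using $\sum_j(1+Ld(z_{Lj},w))^{-N}\lesssim1$ uniformly in $w$ yields \eqref{PlancherelPolya}. The ball of radius $c/L$ cannot be retained: the tail over all of $M$ carries an essential part of the estimate, and only the separation of the family makes that tail summable.
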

\begin{remark}
The above result is interesting because the inequality \eqref{PlancherelPolya}
means that the sequence of normalized reproducing kernels 
is a Bessel sequence for $E_{L}$, i.e.
\begin{align*}
\sum^{m_L}_{j=1}|\langle f,\tilde{K}_{L}(\cdot,z_{Lj})\rangle|^2\lesssim
\left\|f\right\|^2_{2}\quad \forall f\in E_{L},
\end{align*}
where $\left\{\tilde{K}_{L}(\cdot,z_{Lj})\right\}_{j}$ are the 
normalized reproducing kernels. 
Note that $|\tilde{K}_{L}(\cdot,z_{Lj})|^2\simeq
|K_{L}(\cdot,z_{Lj})|^2 k_{L}^{-1}$. That's the reason why the
quantity $k_{L}$ appears in the inequality \eqref{PlancherelPolya} and in 
Definitions \ref{definitionMZ} and \ref{definitionInterp}. 
\end{remark}

\section{Interpolating and M-Z families}
In this section we will present some qualitative results about the interpolating and M-Z 
families.

\subsection{Interpolating families}
The following result shows that the interpolation can be done in a stable way.
\begin{lemma}\label{propinterpfunc}
Let $\Z$ be a triangular family in $M$. Assume $\Z$ is interpolating. 
Then the interpolation can be done by functions $f_{L}\in E_{L}$ such that
\[
\left\|f_{L}\right\|^{2}_{2}\leq \frac{C}{k_{L}}\sum^{m_{L}}_{j=1}|f_{L}(z_{Lj})|^{2},
\]
where $C$ is independent of $L$.
\end{lemma}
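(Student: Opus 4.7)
The plan is to reformulate interpolation operator-theoretically and then upgrade the hypothesis to a uniform bound by a standard open mapping / uniform boundedness argument. For each $L\ge 1$, consider the sampling (restriction) map
\[
R_L\colon E_L\to \C^{m_L},\qquad R_L(f)=(f(z_{Lj}))_{j=1}^{m_L},
\]
where I endow $\C^{m_L}$ with the normalized Hilbert norm $\|c\|_L^2=\frac{1}{k_L}\sum_j |c_j|^2$. First I would observe that $R_L$ is surjective for every fixed $L$: given $c\in\C^{m_L}$, apply the interpolation hypothesis to the global family that equals $c$ at level $L$ and the zero sequence at every other level; the resulting interpolant lies in $E_L$ and realizes the values $c$. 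Since $E_L$ is finite dimensional, the Moore--Penrose pseudoinverse $T_L=R_L^{*}(R_LR_L^{*})^{-1}$ is well defined on $\C^{m_L}$, takes values in $(\ker R_L)^{\perp}\subset E_L$, and produces the minimum $L^2$-norm interpolant $f_L=T_Lc$ for any prescribed data $c$.

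The content of the lemma is that $\|T_L\|_{\C^{m_L}\to E_L}$ is bounded independently of $L$. I would establish this by contradiction. Assume no such uniform bound exists. Then I can pick a strictly increasing sequence $L_n\to\infty$ and vectors $c_n\in\C^{m_{L_n}}$ with $\|c_n\|_{L_n}=1$ such that every $f\in E_{L_n}$ satisfying $R_{L_n}f=c_n$ obeys $\|f\|_2>n$. Now glue these into a triangular family of data $\{c(L)\}_{L\ge 1}$ by setting $c(L_n)=c_n$ and $c(L)=0$ for all other values of $L$. Then $\sup_L \|c(L)\|_L=1<\infty$, so by the interpolation hypothesis there should exist an interpolating family $\{f_L\}\subset E_L$ with $\sup_L\|f_L\|_2<\infty$; but by construction $\|f_{L_n}\|_2>n$ for every $n$, contradicting the uniform bound on $\sup_L\|f_L\|_2$.

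Consequently, there is a constant $C$ independent of $L$ such that $f_L=T_Lc$ satisfies
\[
\|f_L\|_2^{2}\le C\|c\|_L^{2}=\frac{C}{k_L}\sum_{j=1}^{m_L}|c_{Lj}|^{2}=\frac{C}{k_L}\sum_{j=1}^{m_L}|f_L(z_{Lj})|^{2},
\]
which is exactly the desired inequality. The main (and really only) subtlety is the passage from a pointwise-in-$L$ open mapping statement to a uniform-in-$L$ bound: one must exploit that the hypothesis controls whole families $\{c(L)\}_L$ at once, assembling a single counterexample family out of the conjectural bad data $c_n$ at levels $L_n$ in order to trigger a contradiction. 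Everything else is routine Hilbert-space linear algebra in finite dimensions.
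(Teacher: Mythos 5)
Your proof is correct and, at its core, is the same argument the paper gestures at: the paper simply cites the Closed Graph Theorem and Young's book, and your contradiction/gluing argument is a self-contained unpacking of exactly that. You construct the minimum-norm solution operator $T_L$ on each finite-dimensional $E_L$, observe the problem reduces to showing $\sup_L\|T_L\|<\infty$, and then exploit the key structural fact that the interpolation hypothesis controls entire families $\{c(L)\}_L$ at once: assembling the hypothetically bad data at infinitely many distinct levels into a single admissible family yields the contradiction. One microscopic quibble: rather than asserting one can take the bad levels $L_n$ strictly increasing to $\infty$, it is cleaner to say one chooses them \emph{distinct}; this is all the gluing step needs, and since each $\|T_L\|$ is finite one can always extract such a distinct sequence with $\|T_{L_n}\|>n$.
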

The proof follows from the Closed Graph's Theorem (check the basic ideas in 
\cite[Proposition 2, Page 129]{Young}).\\
\\
Now, we provide a necessary condition for an interpolating family.
\begin{prop}\label{interpsep}
Let $\Z$ be an $L^2$-interpolating triangular family in $M$. 
Then $\Z$ is uniformly separated.
\end{prop}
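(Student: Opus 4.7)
The plan is to exploit the stability of interpolation together with Bernstein's inequality to separate any two points at level $L$ by at least $\epsilon/L$. Fix $L\geq 1$ and an index $j\in\{1,\ldots,m_L\}$. I would define the data sequence $c(L)$ by $c_{Lj}=1$ and $c_{Li}=0$ for $i\neq j$, with $c(L')=0$ for every $L'\neq L$. This sequence satisfies
\[
\sup_{L'}\frac{1}{k_{L'}}\sum_{i}|c_{L'i}|^{2}=\frac{1}{k_L}\leq 1,
\]
so by the interpolating hypothesis together with Lemma~\ref{propinterpfunc} there exists $f_L\in E_L$ with $f_L(z_{Li})=\delta_{ij}$ and $\|f_L\|_2^{2}\leq C/k_L$.

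The next step is to convert this $L^{2}$ control into a sup-norm bound. By the reproducing property and Cauchy--Schwarz,
\[
|f_L(z)|=|\langle f_L,K_L(z,\cdot)\rangle|\leq \|f_L\|_2\sqrt{K_L(z,z)}\lesssim \sqrt{\frac{L^{m}}{k_L}}\lesssim 1,
\]
using H\"ormander's diagonal estimates $K_L(z,z)\simeq L^{m}$ and $k_L\simeq L^{m}$. Thus $\|f_L\|_{\infty}$ is bounded uniformly in $L$ and in $j$.

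I would then apply the Bernstein inequality \eqref{BernsteinIneqforEL} to obtain $\|\nabla f_L\|_{\infty}\lesssim L\|f_L\|_{\infty}\lesssim L$. For any $k\neq j$, integrating $|\nabla f_L|$ along a minimizing geodesic from $z_{Lj}$ to $z_{Lk}$ in $M$ gives
\[
1=|f_L(z_{Lj})-f_L(z_{Lk})|\leq d_M(z_{Lj},z_{Lk})\,\|\nabla f_L\|_{\infty}\lesssim L\,d_M(z_{Lj},z_{Lk}),
\]
whence $d_M(z_{Lj},z_{Lk})\gtrsim 1/L$ with a constant independent of $L$, $j$, $k$. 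This is precisely the required uniform separation.

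The main obstacle is keeping all three estimates uniform in $L$: the $L^{2}$ bound on the interpolating function has to scale as $1/k_L$ rather than merely being bounded, which forces the use of the sharper Lemma~\ref{propinterpfunc} in place of the bare definition of interpolation; and the chain $L^{2}\to L^{\infty}\to C^{1}$ only closes because the kernel-size estimate $K_L(z,z)\simeq L^{m}$, the dimension count $k_L\simeq L^{m}$, and the Bernstein scaling combine into a genuinely dimensionless final constant.
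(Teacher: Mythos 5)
Your proof is correct and follows essentially the same route as the paper: obtain a normalized interpolating function $f_L$ via Lemma~\ref{propinterpfunc}, bound its gradient, and integrate along a geodesic between $z_{Lj}$ and $z_{Lk}$ to force $d_M(z_{Lj},z_{Lk})\gtrsim 1/L$. The only difference is how the gradient bound is produced: the paper invokes the $L^2$-to-gradient estimate $\|\nabla f_L\|_\infty\lesssim L\sqrt{k_L}\,\|f_L\|_2$ from [OP, Prop.\ 3.4] in one step, while you derive it by composing the reproducing-kernel bound $\|f_L\|_\infty\leq \|f_L\|_2\sqrt{K_L(z,z)}$ with the Bernstein inequality \eqref{BernsteinIneqforEL} — which is exactly the same estimate, unpacked.
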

\begin{proof}
Fix $L_{0}\geq 1$ and $1\leq j_{0}\leq m_{L_{0}}$. Using Lemma \ref{propinterpfunc}, 
there exist functions $f_{L_{0}}\in E_{L_{0}}$ such that 
$f_{L_{0}}(z_{L_{0}j})=\delta_{jj_{0}}$ and $\left\|f_{L_{0}}
\right\|^{2}_{2}\leq C/k_{L_{0}}$ ($C$ independent of $L$). Applying 
\cite[Proposition 3.4]{OP}, we get the following.
\begin{align*}
1& =|f_{L_{0}}(z_{L_{0}j_{0}})-f_{L_{0}}(z_{L_{0}j})|\leq 
\left\|\nabla f_{L_{0}}\right\|_{\infty}d_{M}(z_{L_{0}j_{0}},z_{L_{0}j})\\
& \lesssim \sqrt{k_{L_{0}}}L_{0}\left\|f_{L_{0}}\right\|_{2}
d_{M}(z_{L_{0}j_{0}},z_{L_{0}j})\lesssim 
L_{0}\sqrt{k_{L_{0}}}\frac{1}{\sqrt{k_{L_{0}}}}d_{M}(z_{L_{0}j_{0}},z_{L_{0}j})\\
& \simeq L_{0}d_{M}(z_{L_{0}j_{0}},z_{L_{0}j}).
\end{align*}
Thus,
\[
d_{M}(z_{L_{0}j_{0}},z_{L_{0}j})\gtrsim 
\frac{1}{L_{0}},\quad \forall L_{0}\geq 1,\ j\neq j_{0},
\]
where the constant does not depend on $L_{0}$ and $j_0$.
\end{proof}
\begin{thm}
Let $\Z$ and $\Z'$ be two triangular families in $M$. Assume that $\Z$ is an 
$L^2$-interpolating family. Then there exists $\delta_0>0$ such that for 
all $0<\delta\leq \delta_0$, $Z'$ is also $L^2$-interpolating provided
\[
d_M(z'_{Lj},z_{Lj})<\delta/L, \quad \forall j=1,\cdots,m_L; L\geq 1.
\]
\end{thm}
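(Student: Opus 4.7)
The plan is to prove that $\Z'$ is $L^2$-interpolating by a Neumann-type iteration that leverages the interpolation property of $\Z$. First, since $\Z$ is $L^2$-interpolating, Proposition \ref{interpsep} supplies a separation constant $\eps_0>0$ for $\Z$, and the triangle inequality
$d_M(z'_{Lj},z'_{Lk})\geq d_M(z_{Lj},z_{Lk})-2\delta/L\geq(\eps_0-2\delta)/L$
shows that $\Z'$ is uniformly separated as soon as $\delta_0<\eps_0/2$. To construct the interpolant, given target values $c=\{c_{Lj}\}$ with $\frac{1}{k_L}\sum_j|c_{Lj}|^2<\infty$, apply Lemma \ref{propinterpfunc} to obtain $f^{(0)}_L\in E_L$ interpolating $c$ at $\Z$ with $\|f^{(0)}_L\|_2^2\leq\frac{C}{k_L}\sum_j|c_{Lj}|^2$, and recursively define residuals $c^{(n+1)}_{Lj}=c^{(n)}_{Lj}-f^{(n)}_L(z'_{Lj})=f^{(n)}_L(z_{Lj})-f^{(n)}_L(z'_{Lj})$, interpolating each $c^{(n)}$ at $\Z$ to produce $f^{(n)}_L\in E_L$. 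If the residuals shrink geometrically in the normalized $\ell^2$-norm, the partial sums $f_L=\sum_n f^{(n)}_L$ converge in the finite-dimensional space $E_L$ and telescoping yields $f_L(z'_{Lj})=c_{Lj}$.

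The crux is the Lipschitz-type estimate
\[
\frac{1}{k_L}\sum_{j=1}^{m_L}|f_L(z_{Lj})-f_L(z'_{Lj})|^2\leq C_1\delta^2\|f_L\|_2^2,\qquad\forall f_L\in E_L,
\]
with $C_1$ independent of $L$ and $\delta$; combined with Lemma \ref{propinterpfunc} this gives the contraction factor $CC_1\delta^2$, so the iteration converges whenever $CC_1\delta_0^2<1$. To prove the estimate, pick $L'$ with $L'(1-\eps)\geq L$ so that $B^\eps_{L'}$ acts as the identity on $E_L$; then for $f_L\in E_L$,
\[
f_L(z_{Lj})-f_L(z'_{Lj})=\langle f_L,\,h_{Lj}\rangle,\qquad h_{Lj}:=B^\eps_{L'}(z_{Lj},\cdot)-B^\eps_{L'}(z'_{Lj},\cdot).
\]
By the standard $TT^*$ duality and Schur's test, $\sum_j|\langle f_L,h_{Lj}\rangle|^2\leq\|f_L\|_2^2\cdot\max_j\sum_k|G_{jk}|$, where $G_{jk}=\langle h_{Lj},h_{Lk}\rangle$ is a double difference in $(z,w)$ of the smoothed kernel $H^\eps_{L'}(z,w):=\sum_i\beta_\eps^2(\lambda_i/L')\phi_i(z)\phi_i(w)$. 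A second-order Taylor expansion, together with the mixed-partial decay $|\partial_z\partial_w H^\eps_{L'}(z,w)|\lesssim L^{m+2}(1+Ld(z,w))^{-N}$, yields $|G_{jk}|\lesssim\delta^2 L^m(1+Ld(z_{Lj},z_{Lk}))^{-N}$. Summing over $k$ using the separation of $\Z$ (the number of $z_{Lk}$ in the annulus $R\leq Ld(z_{Lj},z_{Lk})<R+1$ is $O(R^{m-1})$) controls each row by $\delta^2 k_L$, as required.

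The main obstacle lies in the mixed-partial estimate for $H^\eps_{L'}$, which is not literally provided by Lemma \ref{kernelestimates}. These bounds are nevertheless standard in spectral-multiplier theory: one obtains them either by rewriting $\partial_z\partial_w H^\eps_{L'}$ as a spectral sum after absorbing factors of $\lambda_i/L'$ into a new smooth compactly supported symbol of the type handled by Lemma \ref{kernelestimates}, or by a localization argument combined with the Bernstein inequality \eqref{BernsteinIneqforEL} applied to kernel slices. Once these bounds are in place, choosing $\delta_0$ so that $CC_1\delta_0^2<1$ closes the iteration and produces the desired interpolant, proving that $\Z'$ is $L^2$-interpolating for every $\delta\leq\delta_0$.
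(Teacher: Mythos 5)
Your overall strategy -- prove $\Z'$ is separated, then run a Neumann-type iteration using Lemma \ref{propinterpfunc} to interpolate successive residuals at $\Z$, with contraction controlled by the Lipschitz estimate $\frac{1}{k_L}\sum_j|f_L(z_{Lj})-f_L(z'_{Lj})|^2\leq C_1\delta^2\|f_L\|_2^2$ -- is exactly the ``perturbation argument'' the paper has in mind, and the iteration itself is carried out correctly (the telescoping does give $f_L(z'_{Lj})=c_{Lj}$ and the geometric bound gives $\sup_L\|f_L\|_2<\infty$).

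Where you diverge from the paper is in how the Lipschitz estimate is established. The authors point to ``gradient estimates proved in \cite{OP}''; the natural such estimate is a local sup-gradient bound of the form $\sup_{B(z,r/L)}|\nabla f_L|^2\lesssim L^{m+2}\int_{B(z,Cr/L)}|f_L|^2$, from which the Lipschitz estimate follows immediately by the mean value theorem along a geodesic, summing over $j$, and using the finite overlap of the balls $B(z_{Lj},C/L)$ guaranteed by separation. Your route -- writing $f_L(z_{Lj})-f_L(z'_{Lj})=\langle f_L,h_{Lj}\rangle$, passing to the Gram matrix $G_{jk}$, and applying Schur's test with a mixed-partial decay bound for the smoothed kernel -- arrives at the same conclusion but is substantially more machinery. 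It also depends on the bound $|\nabla_z\nabla_w H^\eps_{L'}(z,w)|\lesssim L^{m+2}(1+Ld(z,w))^{-N}$, which, as you yourself note, is not literally Lemma \ref{kernelestimates}: that lemma controls the kernel, not its mixed derivatives. This is a genuine gap in the write-up. The estimate is indeed standard in spectral-multiplier theory (and appears in the Filbir--Mhaskar circle of results), but your two suggested routes to it -- absorbing $\lambda_i/L'$ factors into a new symbol, or a Bernstein-plus-localization argument -- are only sketched, not executed, and the first one in particular is misleading because differentiating $\phi_i$ in $z$ or $w$ does not simply multiply the spectral symbol by a scalar; one must instead estimate derivatives of the kernel via the smoothness of the multiplier and the finite propagation speed / wave-kernel machinery underlying Lemma \ref{kernelestimates}. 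Since the paper itself gives only a one-line pointer, your proposal is a reasonable blind reconstruction, but if you want a self-contained proof the cleaner path is to invoke (or prove) the local gradient estimate and avoid the kernel-differentiation step entirely.
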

The proof follows by a perturbation argument and some gradient estimates proved 
in \cite{OP}. The next proposition gives us a sufficient condition for interpolation. It 
says, essentially, that a \textit{sparse} family should be interpolating.

\begin{prop}
Let $\Z=\left\{\Z(L)\right\}_{L}=\left\{z_{Lj}\right\}_{L\geq 1,j=1,...,m_{L}}\subset M$ 
be a triangular family of points with $m_L\leq k_{L}$. Assume $\Z$ is separated 
enough, i.e. there exists $R>0$ (big enough) such that
\[
d(z_{Lj},z_{Lk})\geq \frac{R}{L},\quad \forall j\neq k, \quad \forall L.
\]
Then $\Z$ is an interpolating family.
\end{prop}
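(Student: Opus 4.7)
The plan is to construct a bounded interpolation operator as an explicit linear combination of the kernels $B^\eps_L(\cdot, z_{Lj})$ and then invert a small perturbation of the identity by exploiting the kernel decay from Lemma \ref{kernelestimates}.

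First I would define
\[
T_L(c)(z) = \sum_{j=1}^{m_L} \frac{c_j}{B^\eps_L(z_{Lj},z_{Lj})}\, B^\eps_L(z,z_{Lj}), \qquad c\in\C^{m_L},
\]
which lands in $E_L$ by construction. Since $B^\eps_L(w,w)\simeq L^m\simeq k_L$ uniformly in $w$, and since $\langle B^\eps_L(\cdot,z_{Lj}),B^\eps_L(\cdot,z_{Lk})\rangle$ is itself a kernel of the type in Lemma \ref{kernelestimates} (with $H=\beta_\eps^2$), a Schur estimate based on the $R/L$-separation would give
\[
\|T_L(c)\|_2^2 \lesssim \frac{1}{k_L}\sum_{j=1}^{m_L}|c_j|^2,
\]
with constants independent of $L$.

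Next I would study the ``sampling'' matrix obtained by evaluating $T_L(c)$ at the nodes:
\[
(S_L)_{kj} = \frac{B^\eps_L(z_{Lk},z_{Lj})}{B^\eps_L(z_{Lj},z_{Lj})}, \qquad (S_L)_{kk} = 1.
\]
For $k\neq j$, Lemma \ref{kernelestimates} yields $|(S_L-I)_{kj}|\lesssim (1+L\,d(z_{Lk},z_{Lj}))^{-N}$ for any prescribed $N$. Since the family is $R/L$-separated, a volume comparison on $M$ shows that the number of indices $j$ with $n\leq L\,d(z_{Lk},z_{Lj})<n+1$ is at most $C n^{m-1}/R^m$. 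Taking $N>m$ I then expect
\[
\sup_k\sum_{j\neq k}|(S_L-I)_{kj}|\ +\ \sup_j\sum_{k\neq j}|(S_L-I)_{kj}|\ \lesssim\ \frac{C_N}{R^m}.
\]
Choosing $R$ large enough that this quantity is below $1/2$, the Schur test and a Neumann series give $\|S_L^{-1}\|_{\ell^2\to\ell^2}\leq 2$, uniformly in $L$.

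Given data $c=(c_{Lj})$ with $\sup_L k_L^{-1}\sum_j|c_{Lj}|^2<\infty$, I would set $a^{(L)} = S_L^{-1} c^{(L)}$ and $f_L = T_L(a^{(L)})\in E_L$. The definition of $S_L$ forces $f_L(z_{Lk})=c_{Lk}$, and combining the two previous bounds yields $\|f_L\|_2^2\lesssim k_L^{-1}\sum_j|c_{Lj}|^2$ uniformly in $L$, proving that $\Z$ is $L^2$-interpolating. The main obstacle is the Schur-type bound for $S_L - I$: it requires a careful count of points in geodesic shells on $M$ (using the separation together with the volume comparison for small balls and the compactness of $M$ for large scales), combined with the freedom to take $N>m$ in Lemma \ref{kernelestimates}, which is ultimately what fixes how large $R$ must be.
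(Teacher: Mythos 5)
Your proposal is correct and follows essentially the same strategy as the paper: build an approximate interpolant by summing normalized kernels with off-diagonal decay, verify that the resulting sampling matrix is a small perturbation of the identity thanks to the $R/L$-separation, and invert by a Neumann series. The only cosmetic differences are the choice of kernel ($B^{\eps}_L$ in place of the paper's Bochner--Riesz $S^N_L$, both controlled by the same type of pointwise estimate) and appealing to the Schur test rather than the paper's explicit $\ell^1/\ell^\infty$ bound followed by interpolation; these are equivalent, and although the precise power of $R$ you quote in the Schur bound is not exactly $R^{-m}$, it does tend to zero once $N>m$, which is all the argument requires.
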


\begin{proof}
In the following, we consider the Banach spaces:
\[
\A=\left\{v=\left\{v_L\right\}_{L}=\left\{v_{Lj}\right\}^{m_L}_{j=1}:
\quad \|v\|^2_{\A}:=\sup_{L}\frac{1}{k_L}\sum^{m_L}_{j=1}|v_{Lj}|^2<\infty\right\}
\]
and $E:=\left\{f=(f_L)_L;\quad f_L\in E_L\right\}$ endowed with the norm
\[
\|f\|^2_{E}=\sup_{L}\|f_L\|^2_{2}.
\]

Let $\mathcal R:\ E\to \A$ be the evaluating operator, i.e. if 
$v:=\mathcal R(f)$ for some $f\in E$, 
then $v_{Lj}=f_{L}(z_{Lj})$. This operator is linear and continuous by the 
Plancherel-P\'olya type inequality (Theorem \ref{unionsepfam}). 
Now, consider the operator $\mathcal S:\A\to E$ defined as follows: if $v\in\A$, then 
$\mathcal S(v)=:f$ with
\[
f_L(z):=\sum^{m_L}_{j=1}v_{Lj}\frac{S^{N}_{L}(z_{Lj},z)}{S^{N}_{L}(z_{Lj},z_{Lj})},
\]
where $S^{N}_{L}(z,w)$ is the Bochner-Riesz Kernel of order $N$ associated to the 
Laplacian (see Section \ref{estimatesReprKernel} for the definition). The order 
$N$ will be chosen later. Note that the functions $f_L$ belong to $E_L$ and
\[
f_L(z_{Lk})=v_{Lk}+\sum_{j\neq k}v_{Lj}
\frac{S^{N}_{L}(z_{Lj},z_{Lk})}{S^{N}_{L}(z_{Lj},z_{Lj})}.
\]
The operator $\mathcal S$ is well defined. Indeed, let $v\in\A$ and $f:=\mathcal S(v)$. 
We need to prove that $f\in E$. Using Cauchy-Schwarz inequality, we obtain:
\begin{align*}
\left\|f_L\right\|_{2} & =\sup_{\left\|g\right\|_{2}=1}|\langle f_L,g\rangle|=
\sup_{\left\|g\right\|_{2}=1}\left|\sum^{m_{L}}_{j=1}v_{Lj}
\frac{\langle S^{N}_{L}(z_{Lj},\cdot),g\rangle}{S^{N}_{L}(z_{Lj},z_{Lj})}\right|\\
& \lesssim \|v\|_{\A}\sup_{\|g\|_{2}=1}\|S^{N}_{L}g\|_2\leq \|v\|_{\A},
\end{align*}
where we have applied Theorem \ref{unionsepfam} to $S^{N}_L(g)$. Therefore, 
$\|f\|_{E}\lesssim \|v\|_{\A}<\infty$. This proves that $\mathcal S$ is well defined and 
continuous. Obviously this operator is linear.\\
\\
If $\left\|\mathcal R\circ \mathcal S-Id\right\|<1$, then $\mathcal R$ is invertible. Furthermore, $\mathcal R$ is exhaustive 
and as a consequence the family $\Z$ is interpolating. We only need to check that 
$\left\|\mathcal R\circ \mathcal S-Id\right\|<1$. We claim that
\begin{equation}\label{claimenoughseparated}
\sum_{j\neq k}\left|\frac{S^{N}_{L}(z_{Lj},z_{Lk})}{S^{N}_{L}(z_{Lj},z_{Lj})}\right|<<1,
\end{equation}
uniformly in $L$ for $R$ big enough, provided $N+1>m$. Thus,
\[
\|\mathcal R\circ \mathcal S-Id\|^2=\sup_{v\in \A;\|v\|_{\A}=1}\|\mathcal R(\mathcal S(v))-v\|^2_{\A}=
\sup_{v\in\A; \|v\|_{\A}=1}\|w\|^2_{\A},
\]
where $w=\left\{w_{Lk}\right\}_{k;L}$ with
\[
w_{Lk}=\sum_{j\neq k}v_{Lj}\frac{S^{N}_{L}(z_{Lj},z_{Lk})}{S^{N}_{L}(z_{Lj},z_{Lj})}.
\]
Using the claim \eqref{claimenoughseparated}, we get a control of the $L^\infty$-norm 
of $w$:
\[
\sup_{L}|w_{Lk}|\leq \sup_{L}
\sum_{j\neq k}\left|\frac{S^{N}_{L}(z_{Lj},z_{Lk})}{S^{N}_{L}(z_{Lj},z_{Lj})}\right|<<1,
\]
for all $v=\left\{v_{Lj}\right\}_{j;L}$ such that $\sup_{L}|v_{Lj}|=1$. Moreover, using 
again \eqref{claimenoughseparated}, we have the same control of the $L^1$-norm of $w$:
\begin{align*}
\sup_{L}\frac{1}{k_L}\sum^{m_L}_{k=1}|w_{Lk}|
\simeq\sup_{L}\frac{1}{k_L}\sum^{m_L}_{j=1}|v_{Lj}|\sum_{k\neq j}
\left|\frac{S^{N}_{L}(z_{Lk},z_{Lj})}{S^{N}_{L}(z_{Lk},z_{Lk})}\right|<<1,
\end{align*}
for all $v=\left\{v_{Lj}\right\}_{j;L}$ such that $\sup_{L}\frac{1}{k_L}\sum_{j}|v_{Lj}|=1$.
Thus, interpolating between the $L^1$-norm and $L^{\infty}$-norm, we get the same 
result for the $L^2$-norm of $w$ and the proof is complete. Now we proceed in order 
to prove the claim \eqref{claimenoughseparated}. Let
\[
g_{k}(z):=\frac{1}{(1+Ld_M(z,z_{Lk}))^{N+1}}
\]
and $B_{j}:=B(z_{Lj},1/L)$. It is easy to check that
\[
\inf_{B_{j}}g_{k}(z)\geq \frac{1}{2^{N+1}}g_{k}(z_{Lj}).
\]
Using the fact that $\Z$ is separated enough, we know that $B_{j}$ 
are pairwise disjoint and 
$\cup_{j\neq k}B_{j}\subset M\setminus B(z_{Lk},(R-1)/L)$. Therefore, applying 
\eqref{RieszKernelEstimate},
\begin{align*}
& \sum_{j\neq k}\left|\frac{S^{N}_{L}(z_{Lj},z_{Lk})}{S^{N}_{L}(z_{Lj},z_{Lj})}\right|
\leq C_{N}\sum_{j\neq k}g_{k}(z_{Lj})
\leq C_NL^{m}\sum_{j\neq k}\int_{B_{j}}g_{k}(z)dV(z)\\
& \leq C_{N}L^{m}\int_{M}\frac{1}{(Ld_M(z,z_{Lk}))^{N+1}}
\chi_{M\setminus B(z_{Lk},(R-1)/L)}(z)dV.
\end{align*}
Consider for any $t\geq 0$, the following set $A_{t}$.
\[
A_{t}=\left\{z\in M:\quad d_M(z,z_{Lk})\geq \frac{R-1}{L},
\quad d_M(z,z_{Lk})<\frac{t^{-1/(N+1)}}{L}\right\}.
\]
Using the distribution function, one can compute that
\begin{align*}
\sum_{j\neq k}\left|\frac{S^{N}_{L}(z_{Lj},z_{Lk})}{S^{N}_{L}(z_{Lj},z_{Lj})}\right|
& \leq C_{N}L^{m}\int^{(R-1)^{-(N+1)}}_{0}\vol(A_{t})dt\\
& \lesssim C_{N}\frac{1}{(R-1)^{(N+1)-m}},
\end{align*}
provided $N+1>m$. Taking $R$ big enough 
we get the desired claim.
\end{proof}

\subsection{Marcinkiewicz-Zygmund families}
In what follows, we present some qualitative results concerning the M-Z 
families. The proof of these results follows from standard techniques and the 
ideas in \cite[Theorem 4.7]{Mar}, replacing the corresponding gradient 
estimates obtained in \cite{OP}.\\
\\
The following theorem allows us to assume, without loss of generality, 
that a M-Z familiy is uniformly separated.
\begin{thm}\label{MZseparated}
Let $\Z\subset M$ be an $L^2$-\textup{M-Z} family. Then there exists a uniformly 
separated family $\tilde{\Z}\subset \Z$ which is also an $L^2$-\textup{M-Z} family.
\end{thm}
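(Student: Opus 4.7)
The plan is to extract $\tilde{\Z}$ by a greedy maximal-separation procedure and then show that both M-Z inequalities are inherited from $\Z$, provided the separation parameter is chosen small enough. First, I would invoke Theorem \ref{unionsepfam}: since the upper M-Z inequality is exactly the Plancherel--P\'olya bound, $\Z$ is a finite union of uniformly separated families, so every geodesic ball of radius $1/L$ contains at most $N_0$ points of $\Z(L)$, with $N_0$ depending only on $M$ and the M-Z constant. Fix $\eps>0$ to be specified later and, for every $L$, select $\tilde{\Z}(L)\subset\Z(L)$ to be a maximal $\eps/L$-separated subfamily. By maximality, every discarded $z_{Lj}$ admits a companion $z_{L\tau(j)}\in\tilde{\Z}(L)$ with $d_M(z_{Lj},z_{L\tau(j)})<\eps/L$, and the assignment $\tau$ is at most $N_0$-to-one.

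The upper sampling bound for $\tilde{\Z}$ is automatic because $\tilde{\Z}(L)\subset\Z(L)$. For the lower bound I would begin from the lower M-Z inequality for $\Z$ and apply
\[
|f_L(z_{Lj})|^2\leq 2|f_L(z_{L\tau(j)})|^2+2|f_L(z_{Lj})-f_L(z_{L\tau(j)})|^2
\]
to every discarded $j$. Summed over such $j$, the first term is absorbed into a constant multiple of $\sum_{\tilde{k}}|f_L(z_{L\tilde{k}})|^2$ thanks to the bounded multiplicity of $\tau$. To handle the error term I would employ a localized $L^\infty$-to-$L^2$ version of Bernstein's inequality of the form
\[
\|\nabla f_L\|_{L^\infty(B(z,1/L))}^2\lesssim L^{m+2}\int_{B(z,r/L)}|f_L|^2\,dV,
\]
valid for all $f_L\in E_L$ and a fixed $r$. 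This yields $|f_L(z_{Lj})-f_L(z_{L\tau(j)})|^2\lesssim \eps^2 L^m\int_{B(z_{L\tau(j)},r/L)}|f_L|^2\,dV$; summing over discarded $j$ with bounded overlap (which holds because $\tilde{\Z}$ is itself $\eps/L$-separated) controls the total error by $C\eps^2 k_L\|f_L\|_2^2$.

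Combining these estimates gives $\|f_L\|_2^2\leq (C'/k_L)\sum_{\tilde{k}}|f_L(z_{L\tilde{k}})|^2+C''\eps^2\|f_L\|_2^2$, and choosing $\eps$ so that $C''\eps^2\leq 1/2$ absorbs the second term into the left-hand side and produces the lower M-Z inequality for $\tilde{\Z}$. The main obstacle I anticipate is the localized Bernstein estimate: one needs pointwise gradient bounds on $f_L$ in terms of \emph{local} $L^2$ norms rather than the global $L^\infty$ version \eqref{BernsteinIneqforEL}. I expect this to follow either from the kernel estimate \eqref{generalkernelestimate} applied to a suitable smooth cutoff with $N$ sufficiently large, or by adapting the arguments of \cite{FM-Bernstein} and \cite{OP}.
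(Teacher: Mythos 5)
Your overall strategy matches the one the paper sketches: extract a maximal $\eps/L$-separated subfamily, note the upper M-Z bound is inherited by restriction, obtain the lower bound by comparing discarded samples to retained ones via a Bernstein-type gradient estimate, and absorb an $O(\eps^2)$ error. This is the route of \cite[Theorem 4.7]{Mar} with the gradient estimates of \cite{OP}, which is what the paper cites. However, two steps in your execution fail as written. The localized Bernstein estimate
\[
\|\nabla f_L\|_{L^\infty(B(z,1/L))}^2 \lesssim L^{m+2}\int_{B(z,r/L)}|f_L|^2\,dV
\]
with a sharp cutoff at \emph{fixed} radius $r/L$ is false: take $f_L$ with all of its $L^2$ mass outside $B(z,r/L)$, the left side need not vanish. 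What the reproducing identity $f_L=B_{2L}(f_L)$ and the kernel estimate \eqref{generalkernelestimate} (applied to $\nabla_z B_{2L}(z,\cdot)$) actually give, via Cauchy--Schwarz against the weight $(1+Ld(z,u))^{-N}$, is the tail-included version
\[
|\nabla f_L(z)|^2\lesssim L^{m+2}\int_M\frac{|f_L(u)|^2}{(1+Ld(z,u))^N}\,dV(u)
\]
for any $N>m$, with a constant depending on $N$. If you truncate at $r/L$ and keep track of the tail, summing over the $\sim k_L$ discarded indices adds a term of order $L^{2m}r^{-(2N-m)}\|f_L\|_2^2$, which cannot be made $O(k_L\|f_L\|_2^2)$ for a fixed $r$.

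More seriously, the ``bounded overlap'' you invoke from the $\eps/L$-separation of $\tilde{\Z}$ gives an overlap factor of size $(r/\eps)^m$ for the balls $B(z_{L\tau(j)},r/L)$, so the total error becomes $\sim r^m\eps^{2-m}k_L\|f_L\|_2^2$, which \emph{diverges} as $\eps\to 0$ once $m>2$ and defeats the absorption argument. The $\eps$-uniform constant must come from $\Z$ itself, not from $\tilde{\Z}$: by Plancherel--P\'olya, $\Z$ is a finite union of $s$-separated families with $s$ and the number of families $N_0$ fixed before $\eps$ is chosen. Using the weighted gradient estimate above and Fubini, the quantity to control is $\sum_{j\ \text{disc.}}(1+Ld(z_{L\tau(j)},u))^{-N}$; since $d(z_{L\tau(j)},u)$ and $d(z_{Lj},u)$ differ by less than $1/L$, this is comparable to $\sum_{j}(1+Ld(z_{Lj},u))^{-N}\lesssim N_0\,s^{-m}$, uniformly in $u$, $L$ and $\eps$. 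This yields a total error $C\eps^2 k_L\|f_L\|_2^2$ with $C$ genuinely independent of $\eps$, after which the absorption step succeeds. With these two repairs your proof goes through.
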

The next result shows us that a small perturbation of a M-Z family is still a M-Z 
family.
\begin{thm}
Let $\mathcal Z$ be an $L^2$-\textup{M-Z} family. There exists $\eps_0>0$ such 
that if $\mathcal Z'$ is a uniformly separated family with
\[
d_{M}(z_{Lj},z'_{Lj})<\frac{\eps}{L},
\]
for some $\eps\leq \eps_0$, then the family of points $\Z'$ is $L^2$-\textup{M-Z}.
\end{thm}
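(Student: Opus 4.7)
The plan is a standard perturbation argument, comparing the values of any $f_L\in E_L$ at the original nodes $z_{Lj}$ and at the perturbed nodes $z'_{Lj}$. The upper M-Z inequality for $\Z'$ is immediate from the Plancherel-P\'olya Theorem \ref{unionsepfam}, because $\Z'$ is assumed uniformly separated, so
\[
\frac{1}{k_L}\sum_{j=1}^{m_L} |f_L(z'_{Lj})|^2 \lesssim \|f_L\|_2^2.
\]
All the content therefore lies in the lower M-Z inequality.

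First I would split $f_L(z_{Lj}) = f_L(z'_{Lj}) + \bigl(f_L(z_{Lj})-f_L(z'_{Lj})\bigr)$ and use $(a+b)^2\le 2a^2+2b^2$ together with the M-Z property of $\Z$ itself:
\[
k_L\|f_L\|_2^2 \lesssim \sum_j |f_L(z_{Lj})|^2 \le 2\sum_j |f_L(z'_{Lj})|^2 + 2\sum_j |f_L(z_{Lj})-f_L(z'_{Lj})|^2.
\]
If I can prove that the last sum is bounded by $C\eps^2 k_L \|f_L\|_2^2$ with $C$ independent of $L$ and of $f_L$, then taking $\eps_0$ small enough so that $2C\eps_0^2$ is, say, half the reciprocal of the M-Z constant of $\Z$, the error term is absorbed into the left-hand side and one obtains $k_L\|f_L\|_2^2 \lesssim \sum_j|f_L(z'_{Lj})|^2$, which is the desired lower M-Z bound.

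For the error, along a minimizing geodesic from $z_{Lj}$ to $z'_{Lj}$ of length at most $\eps/L$ one has
\[
|f_L(z_{Lj})-f_L(z'_{Lj})| \le \frac{\eps}{L}\sup_{B(z'_{Lj},2\eps/L)}|\nabla f_L|.
\]
The key ingredient is then a \emph{local} Bernstein-type inequality from \cite{OP}, of the form
\[
|\nabla f_L(z)|^2 \lesssim L^{m+2}\int_{B(z,c/L)}|f_L|^2\,dV.
\]
Squaring and summing, and using $L^m\simeq k_L$,
\[
\sum_j |f_L(z_{Lj})-f_L(z'_{Lj})|^2 \lesssim \eps^2 k_L \sum_j \int_{B(z'_{Lj},c'/L)}|f_L|^2\,dV.
\]
Because $\Z'$ is uniformly separated with some constant $\eta>0$, the balls $B(z'_{Lj},c'/L)$ have bounded overlap (depending only on $c'/\eta$ and $m$), so the sum of integrals is $\lesssim \|f_L\|_2^2$, giving the required $C\eps^2 k_L\|f_L\|_2^2$.

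The main obstacle is the localized gradient estimate. The global Bernstein bound $\|\nabla f_L\|_\infty\lesssim L\sqrt{k_L}\,\|f_L\|_2$ used in Proposition \ref{interpsep} is far too crude here: summing it over the $m_L\gtrsim k_L$ nodes produces an extra factor of $m_L$ and absorption becomes impossible. The localization furnished by \cite{OP} is precisely what makes the perturbation argument close; once that is granted, the rest is bookkeeping.
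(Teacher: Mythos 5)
Your approach --- absorb the perturbation error into the M-Z bound on $\Z$, controlling the error by a local Bernstein-type gradient estimate and the separation of $\Z'$ --- is exactly the standard strategy the paper has in mind (the paper supplies no proof, only the remark that these results ``follow from standard techniques and the ideas in \cite[Theorem 4.7]{Mar}, replacing the corresponding gradient estimates obtained in \cite{OP}''). Two points in your write-up need tightening before the argument closes.

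First, the local Bernstein estimate as you wrote it, $|\nabla f_L(z)|^2 \lesssim L^{m+2}\int_{B(z,c/L)}|f_L|^2\,dV$, is not literally true: $f_L$ could vanish on $B(z,c/L)$ while $\nabla f_L(z)\neq 0$. What the kernel estimates actually give, by writing $f_L=B_{2L}(f_L)$, differentiating the kernel, and applying Cauchy--Schwarz, is the weighted version
\[
|\nabla f_L(z)|^2 \lesssim L^{m+2}\int_{M}\frac{|f_L(w)|^2}{(1+Ld(z,w))^{N}}\,dV(w),\qquad N>m.
\]
Summing over $j$ and interchanging sum and integral, the uniform separation of $\Z'$ bounds $\sum_j (1+Ld(z'_{Lj},w))^{-N}$ by a constant, and you arrive at the bound $\lesssim \eps^2 L^{m}\|f_L\|_2^2\simeq\eps^2 k_L\|f_L\|_2^2$ that you need; so the conclusion is right, but the estimate must be stated with the decaying weight rather than a hard cutoff to a ball.

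Second, and more substantively, your $\eps_0$ ends up depending on the overlap constant of the balls around $\Z'$, i.e.\ on the separation constant of $\Z'$. But in the theorem $\eps_0$ is chosen \emph{before} $\Z'$, so it must not depend on $\Z'$. The fix is to first invoke Theorem \ref{MZseparated} to extract a uniformly $s$-separated M-Z subfamily $\tilde\Z\subset\Z$ with $s$ determined by $\Z$ alone, run the absorption argument on the corresponding subfamily $\tilde\Z'\subset\Z'$ (which is automatically $(s-2\eps)$-separated), and choose $\eps_0<s/4$ so the overlap constant is governed by $s$, hence by $\Z$ only. The lower M-Z inequality for $\tilde\Z'$ implies the one for the larger family $\Z'$, and the upper inequality comes from Plancherel--P\'olya as you observed.
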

Now we provide a sufficient condition for a family to be $L^2$-M-Z. 
Intuitively, a family should be \textit{dense} in order to be M-Z.
\begin{thm}\label{sufficientconditionMZ}
There exists $\eps_0>0$ such that if $\mathcal Z$ is an $\eps$-dense family 
(not necessarily uniformly separated), i.e. for all $L\geq1$
\[
\sup_{\xi\in M}d_{M}(\xi,\mathcal Z(L))<\frac{\eps}{L}, \quad (\eps\leq \eps_0),
\]
then there exists a uniformly separated subfamily which is $\tilde{\eps}$-dense and 
is an $L^2$-\textup{M-Z} family provided that $\tilde{\eps}\leq \eps_0$.
\end{thm}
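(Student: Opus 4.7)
The plan is to first extract a maximal uniformly separated subfamily $\tilde{\Z}\subset\Z$ by a standard greedy argument: at each level $L$, process the points of $\Z(L)$ in some fixed order and retain a point only if it lies at geodesic distance $\ge\eps/L$ from every previously retained one. The resulting $\tilde{\Z}$ is then uniformly separated with constant $\eps$, and by maximality every point $\xi\in M$ lies within distance $\eps/L$ of a retained point. Combined with the hypothesis that $\Z$ is $\eps$-dense, this gives that $\tilde{\Z}$ is $\tilde{\eps}$-dense with $\tilde{\eps}\le 2\eps$. The upper Bessel inequality $\frac{1}{k_L}\sum_j|f_L(z_{Lj})|^2\lesssim\|f_L\|_2^2$ is immediate from Theorem \ref{unionsepfam}.

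For the lower frame bound I would use a quadrature argument. Attach to $\tilde{\Z}(L)$ the Voronoi partition $\{V_{Lj}\}$ of $M$, where $V_{Lj}$ is the set of points whose nearest element of $\tilde{\Z}(L)$ is $z_{Lj}$. Separation and density force
\[
B(z_{Lj},\eps/(2L))\subset V_{Lj}\subset B(z_{Lj},2\tilde{\eps}/L),
\]
so $\vol(V_{Lj})\simeq L^{-m}\simeq 1/k_L$ and $\mathrm{diam}(V_{Lj})\lesssim\tilde{\eps}/L$, with constants uniform in $L$ and $j$. Since $\vol(V_{Lj})\simeq 1/k_L$, it suffices to prove
\[
\Bigl|\|f_L\|_2^2-\sum_j\vol(V_{Lj})\,|f_L(z_{Lj})|^2\Bigr|\le C\tilde{\eps}\,\|f_L\|_2^2,
\]
as this will yield both M-Z inequalities once $\eps_0$ is chosen so that $2C\eps_0<1/2$.

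To prove the quadrature estimate, write $\|f_L\|_2^2=\sum_j\int_{V_{Lj}}|f_L|^2\,dV$, factor
\[
|f_L(w)|^2-|f_L(z_{Lj})|^2=\bigl(f_L(w)-f_L(z_{Lj})\bigr)\bigl(f_L(w)+f_L(z_{Lj})\bigr),
\]
and apply Cauchy-Schwarz cell by cell. The oscillation is controlled by a local Poincaré-type bound
\[
\int_{V_{Lj}}|f_L(w)-f_L(z_{Lj})|^2\,dV(w)\lesssim(\tilde{\eps}/L)^2\int_{V_{Lj}^*}|\nabla f_L|^2\,dV,
\]
with $V_{Lj}^*$ a mildly enlarged cell whose enlargements have bounded overlap. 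Summing in $j$ and using the integral Bernstein inequality $\|\nabla f_L\|_2\le L\|f_L\|_2$ (immediate from $\Delta_M\phi_i=-\lambda_i^2\phi_i$ with $\lambda_i\le L$), the oscillation contributes $\lesssim\tilde{\eps}\|f_L\|_2$; the other Cauchy-Schwarz factor $\|f_L(w)+f_L(z_{Lj})\|_{L^2(V_{Lj})}$ is absorbed by $\|f_L\|_2$ together with the already-established Bessel bound, yielding the total error $\lesssim\tilde{\eps}\|f_L\|_2^2$.

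The step I expect to cause the most friction is the local Poincaré estimate on the Voronoi cells: on convex Euclidean domains it is standard, but for intrinsic Voronoi cells on $(M,g)$ one needs the constants to be uniform in $L$ and in the base point $z_{Lj}$. This is precisely where compactness of $M$ and the gradient estimates of \cite{OP, FM-Bernstein} enter, via the fact that for $L$ large each $V_{Lj}$ sits in a geodesic normal chart where the metric is essentially Euclidean up to bounded distortion, so the standard Euclidean Poincaré inequality transfers with uniform constants. Once this is in place, the argument above closes.
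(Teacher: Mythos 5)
Your overall scheme --- extract a maximal $\eps/L$-separated subfamily, get the Bessel bound from the Plancherel--P\'olya theorem, and prove the lower frame bound by comparing $\|f_L\|_2^2$ to a Riemann-type sum over a Voronoi partition --- is a reasonable and standard route, consistent with the ``standard techniques'' the paper alludes to via \cite[Theorem 4.7]{Mar} and \cite[Theorem 5.1]{FM-MZ}. The greedy separation argument, the inclusion $B(z_{Lj},\eps/(2L))\subset V_{Lj}\subset B(z_{Lj},2\tilde\eps/L)$, and the double Cauchy--Schwarz at the end are all fine, and you are right that the $L^2$ Bernstein inequality $\|\nabla f_L\|_2\le L\|f_L\|_2$ is immediate from the eigenfunction expansion.

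The gap is exactly where you suspected it would be, but not for the reason you give. The estimate
\[
\int_{V_{Lj}}|f_L(w)-f_L(z_{Lj})|^2\,dV(w)\lesssim\Bigl(\frac{\tilde\eps}{L}\Bigr)^2\int_{V_{Lj}^*}|\nabla f_L|^2\,dV
\]
is \emph{not} the standard Poincar\'e inequality: the standard inequality on a convex Euclidean domain controls $\int|f-\fint f|^2$, i.e.\ deviation from the cell \emph{average}, not from a pointwise value. The pointwise version you write down is simply false for general $f\in W^{1,2}$ once $m\ge 2$ --- take $m=2$ and $f(x)=\log\log(e/|x|)$ near the origin (suitably truncated): $\int_{B(0,r)}|\nabla f|^2$ is finite but $f(0)=\infty$, so the left side diverges. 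Hence ``each $V_{Lj}$ sits in a normal chart where the metric is nearly Euclidean, so the Euclidean Poincar\'e inequality transfers'' cannot close the argument; no amount of chart comparison upgrades the mean-based Poincar\'e inequality to a pointwise one for Sobolev functions.

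What actually makes the desired quadrature error estimate true is the bandlimited structure of $f_L\in E_L$, which you have not invoked at the crucial place. Because $f_L=B^{1/2}_{2L}f_L$ and the kernel $B^{1/2}_{2L}(z,w)$ has the off-diagonal decay of Lemma \ref{kernelestimates}, one gets a local sub-mean-value bound of the form $|f_L(z)|^2\lesssim L^m\int_M|f_L(w)|^2(1+Ld(z,w))^{-N}\,dV(w)$, and likewise for $\nabla f_L$ using the gradient estimates on the kernel from \cite{OP,FM-Bernstein}. Feeding $\sup_{V_{Lj}}|f_L\,\nabla f_L|$ through these local bounds, using the bounded overlap of the enlarged balls and then $\|\nabla f_L\|_2\le L\|f_L\|_2$, gives the $O(\tilde\eps)\|f_L\|_2^2$ error you want. (This is in effect the mechanism behind Lemma \ref{construcciocassampling} elsewhere in the paper.) Alternatively, keep the mean-based Poincar\'e inequality and separately estimate $|f_L(z_{Lj})-\fint_{V_{Lj}}f_L|$ using the $L^\infty$-Bernstein inequality \eqref{BernsteinIneqforEL} together with the local sub-mean-value for $f_L$. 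Either repair is fine, but as written the proof rests on a Poincar\'e-type inequality that does not hold.
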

\begin{remark}
Theorem \ref{sufficientconditionMZ} has been also proved by F. Fibir and 
H.N. Mhaskar using other techniques (see \cite[Theorem 5.1]{FM-MZ}).
\end{remark}

\section{Beurling-Landau density}\label{densityresultsgeneralmfolds}

In this section, we provide necessary conditions for a family to be interpolating 
or M-Z in terms of the following Beurling-Landau type densities.

\begin{defini}\label{defBeurlingLandauDensity}
Let $\Z$ be a triangular family of points in $M$. We define the upper and lower 
Beurling-Landau density, respectively, as
\[
D^{+}(\Z)=\limsup_{R\to\infty}\left(\limsup_{L\to\infty}
\max_{\xi\in M}\frac{\frac{1}{k_L}\#(\Z(L)\cap B(\xi,R/L))}
{\frac{\vol(B(\xi,R/L))}{\vol(M)}}\right),
\]
\[
D^{-}(\Z)=\liminf_{R\to\infty}\left(\liminf_{L\to\infty}
\min_{\xi\in M}\frac{\frac{1}{k_L}\#(\Z(L)\cap B(\xi,R/L))}
{\frac{\vol(B(\xi,R/L))}{\vol(M)}}\right).
\]
\end{defini}
\begin{remark}
Let $\mu_L$ be the normalized counting measure, i.e.
\[
\mu_L=\frac{1}{k_L}\sum^{m_L}_{j=1}\delta_{z_{Lj}}
\]
and $\sigma$ the normalized volume measure, i.e. $d\sigma=dV/\vol(M)$. 
Then the densities defined above can be viewed as the asymptotic behaviour 
of the quantity
\[
\frac{\mu_L(B(\xi,R/L))}{\sigma(B(\xi,R/L))}.
\]
\end{remark}
Our main result is:
\begin{thm}\label{charactdensitygeneral}
Let $M$ be an arbitrary smooth compact Riemannian manifold without boundary 
of dimension $m\geq 2$ and $\Z$ a triangular family in $M$. If $\Z$ is an 
$L^2$-\textup{M-Z} family then there exists a uniformly separated $L^2$-\textup{M-Z} family 
$\tilde{\Z}\subset \Z$ such that
\[
D^{-}(\tilde{\Z})\geq 1.
\]
If $\Z$ is an $L^2$-interpolating family then it is uniformly separated and
\[
D^{+}(\Z)\leq 1.
\]
\end{thm}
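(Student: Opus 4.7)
The plan is to adapt Landau's classical concentration-operator argument for Paley--Wiener spaces in $\R^n$ to the present manifold setting, using H\"ormander's asymptotics for $K_L(z,z)$ and the off-diagonal decay of the smoothed kernels $B^{\eps}_L$ in place of the Fourier transform. Both directions will rest on a uniform comparison between the trace of a localization operator on $E_L$ and the counting function of $\Z(L)$ inside small geodesic balls.

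First I would reduce to the uniformly separated case: for the M-Z half, Theorem \ref{MZseparated} already produces $\tilde\Z\subset\Z$ that is separated and M-Z, while separation in the interpolating case is Proposition \ref{interpsep}. Next, fix $\xi\in M$, $R>0$, and put $B=B(\xi,R/L)$. Define the concentration operator on $E_L$ by
\[
T_{L,R,\xi} f = P_L\bigl(\chi_B\, f\bigr),
\]
where $P_L$ is the orthogonal projection with kernel $K_L$. It is self-adjoint with $0\le T_{L,R,\xi}\le I$, and the on-diagonal estimate gives
\[
\tr(T_{L,R,\xi})=\int_B K_L(z,z)\,dV(z)=k_L\,\frac{\vol(B)}{\vol(M)}\bigl(1+O(1/L)\bigr).
\]
A direct computation yields $\tr(T_{L,R,\xi})-\tr(T_{L,R,\xi}^2)=\int_B\!\int_{M\setminus B}|K_L(z,w)|^2\,dV(w)\,dV(z)$; replacing $K_L$ by a smoothed version $B^{\eps}_L$ on suitable subspaces and invoking the decay \eqref{generalkernelestimate} shows this plunge quantity is $o\bigl(k_L\vol(B)/\vol(M)\bigr)$ as $R\to\infty$. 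Consequently the spectral subspace $V_{L,R,\xi}$ of $T_{L,R,\xi}$ corresponding to eigenvalues $\ge 1-\delta$ has dimension asymptotic to $k_L\vol(B)/\vol(M)$, and its elements are essentially $L^2$-concentrated in a slightly enlarged ball $B(\xi,R'/L)$.

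For the M-Z direction, applying the M-Z inequality to $f\in V_{L,R,\xi}$, together with Bernstein's inequality \eqref{BernsteinIneqforEL} and the separation of $\tilde\Z$ to discard sample contributions outside $B(\xi,R'/L)$, produces a bounded-below linear map from $V_{L,R,\xi}$ into $\ell^2\bigl(\tilde\Z(L)\cap B(\xi,R'/L)\bigr)$, forcing
\[
\#\bigl(\tilde\Z(L)\cap B(\xi,R'/L)\bigr)\ge \dim V_{L,R,\xi}.
\]
Dividing by $k_L\vol(B)/\vol(M)$ and letting $L\to\infty$ and then $R\to\infty$ (with $R'/R\to 1$) yields $D^{-}(\tilde\Z)\ge 1$. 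The interpolating direction is dual: the normalized reproducing kernels $\{\tilde K_L(\cdot,z_{Lj})\}_{z_{Lj}\in B}$ form a Riesz sequence whose linear span sits almost inside $V_{L,R,\xi}$ after smoothing, so its cardinality is bounded by $\dim V_{L,R,\xi}$ plus lower-order terms, giving $D^{+}(\Z)\le 1$.

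The main obstacle is the uniform-in-$\xi$ control of the plunge region, i.e.\ showing that the number of eigenvalues of $T_{L,R,\xi}$ in the intermediate range $(\delta,1-\delta)$ is $o\bigl(k_L\vol(B)/\vol(M)\bigr)$ with constants independent of the center $\xi$. This relies on the uniform off-diagonal estimates for $B^{\eps}_L$ from Lemma \ref{kernelestimates} and on transferring Landau's boundary error estimate from the flat Paley--Wiener model to the manifold via geodesic normal coordinates, carefully matching scales so that the ambient curvature contributes only to lower-order terms as $L\to\infty$.
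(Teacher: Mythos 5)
Your high-level strategy coincides with the paper's: reduce to uniformly separated families, introduce a concentration operator on $E_L$ over small geodesic balls, compare $\tr(T)$ with $\tr(T^2)$ to bound the plunge region, and tie the number of large eigenvalues to the counting function of $\Z(L)$. That is exactly Landau's argument, transported to the manifold via H\"ormander's asymptotics and off-diagonal kernel decay, and it is the route the paper follows. However, as written there are two genuine gaps.

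First, the claim that the trace defect $\tr(T)-\tr(T^2)$ is $o\bigl(k_L\vol(B)/\vol(M)\bigr)$ as $R\to\infty$ is false for the operator you actually need. The classical kernel $K_L$ has no usable off-diagonal decay, and after replacing it by the smoothed kernel $B^{\eps}_L$ (i.e.\ passing to the modified concentration operator $T^{\eps}_{L,A}$, as the paper does), the defect is controlled only by
\[
C_1\bigl(1-(1-\eps)^m\bigr)R^{m}+C_2R^{m-1},
\]
see Proposition \ref{traceestimategeneral}. For fixed $\eps>0$ this is \emph{not} $o(R^m)$; the $R^m$ term carries a small constant proportional to $\eps$, and the density inequalities $D^-\geq 1$, $D^+\leq 1$ only fall out after the extra limits $\eps\to 0$ (and $\rho\to 0$ in the interpolating case), which come \emph{after} $R\to\infty$. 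Without tracking these parameters explicitly the argument does not close.

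Second, the step ``applying the M-Z inequality together with Bernstein and separation to discard sample contributions outside $B(\xi,R'/L)$'' conceals the real work. Concentration of $f$ in $L^2(B)$ does not immediately control the discrete sum $\frac{1}{k_L}\sum_{z_{Lj}\notin B'}|f(z_{Lj})|^2$; Bernstein's inequality only governs pointwise oscillation on $O(1/L)$ scales and cannot bound this tail by $\|f\|_{L^2(M\setminus B)}^2$. What is needed is the localization inequality of Lemma \ref{construcciocassampling}: writing $f=B_{2L}f$ and using the polynomial decay of $B_{2L}(z,w)$ off the diagonal, one shows
\[
\frac{1}{k_L}\sum_{z_{Lj}\notin A^+_L(t)}|f(z_{Lj})|^2
\le C_1\int_{M\setminus A_L}|f|^2+C_2\eta\int_{A_L}|f|^2
\]
with $\eta$ arbitrarily small for $t$ large. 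This, not Bernstein, is what converts the M-Z frame inequality into the eigenvalue bound $\lambda^L_{N_L+1}\le\gamma<1$ (Lemma \ref{controlvapsconcentraciosamplinggeneral}). The dual interpolating step is likewise not just a ``Riesz sequence sitting almost inside $V_{L,R,\xi}$''; the paper constructs the subspace $F=\operatorname{span}\{h_j\}$ with $B^{\eps}_{L(1+\rho)}h_j=f_j$ equal to the interpolating dual basis, checks via the same localization lemma that the Rayleigh quotient on $F$ is bounded below by $C\beta_\eps^2(1/(1+\rho))$, and then invokes Weyl--Courant (Lemma \ref{controlvapsconcentraciointerpolaciogeneral}). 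These lemmas are the crux of the proof and cannot be waved through; I would supply them before asserting the dimension inequalities.
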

This result was proved in the particular case when $M=\mathbb S^{m}$ in \cite{Mar}. 
Following the ideas of \cite{Mar}, we prove Theorem \ref{charactdensitygeneral} 
in the general case of a manifold. In \cite{Mar}, the key idea to prove this result was 
the comparision of the trace of the concentration operator and its square with an 
estimate of the eigenvalues of this operator. In general, the main difference from the 
case of the Sphere is that we lack of an explicit expression of the reproducing kernel. 
Thus, in the general setting, we need to work with a \lq\lq modified" concentration 
operator. Before we proceed, we shall introduce the concept of the classical and 
modified concentration operator.
\subsection{Classical Concentration Operator}
\begin{defini}
The \textit{classical concentration operator} $\K^{L}_A$, over a set $A\subset M$, is defined for 
$f_L\in E_L$ as
\begin{equation}
\K^{L}_Af_L(z)=\int_{A}K_L(z,\xi)f_L(\xi)dV(\xi).
\end{equation}
\end{defini}
This operator is the composition of the restriction operator to $A$ with the 
orthogonal projection of $E_L$, i.e. $\K^{L}_{A}(f_L)=P_{E_L}(\chi_{A}f_L)$ for all 
$f_L\in E_L$. The operator $\K^{L}_A$ is self-adjoint. Indeed, if $f_L,g_L\in E_L$ 
then:
\begin{align*}
\langle \K^{L}_Af_L,g_L \rangle&=\langle P_{E_L}(\chi_A\cdot f_L),g_L \rangle=
\langle \chi_A\cdot f_L,P_{E_L}(g_L)\rangle=\langle \chi_A\cdot f_L,g_L\rangle\\
&=\langle f_L,\chi_A\cdot g_L\rangle=\langle P_{E_L}f_L,\chi_A\cdot g_L\rangle
=\langle f_L,\K^L_A(g)\rangle.
\end{align*}
Alternatively, we can view the action of the concentration operator as a matrix acting on 
a sequence $\beta=\left\{\beta_{i}\right\}_{i=1,\ldots,k_L}$ that are the Fourier coefficients of a 
function $f_L\in E_L$ (with respect to the orthonormal basis $\left\{\phi_i\right\}$). If 
we denote by $D_L := (d_{ij})^{k_L}_{i,j=1}$, where 
\[
d_{ij}=\int_{A}\phi_i\phi_j,
\]
then $\K^{L}_A(f_L)\equiv D_L(\beta)$.\\
Using the spectral theorem, we know that the eigenvalues of $\K^L_A$ are all real and 
$E_L$ has an orthonormal basis of eigenvectors of $\K^{L}_A$. The trace of $\K^{L}_A$ is
\[
\text{tr}(\K^{L}_{A})=\sum^{k_L}_{i=1}d_{ii}=\int_{A}K_{L}(z,z)dV(z).
\]
Similarly, we can compute the trace of $\K^{L}_A\circ \K^{L}_{A}$.
\begin{align*}
\text{tr}(\K^{L}_A\circ \K^{L}_A)=\sum^{k_L}_{i,j=1}d_{ij}d_{ji}
=\int_{A\times A}|K_{L}(z,w)|^2dV(w)dV(z).
\end{align*}
We will choose $A$ as $B(\xi, R/L)$ for some fixed point $\xi\in M$. Taking into 
account that
\[
\vol(B(\xi,R/L))\simeq \frac{R^m}{L^{m}}
\]
and using H\"ormander's estimates for the reproducing kernel and $k_L$ (see 
Section \ref{estimatesReprKernel}), we get
\begin{equation}\label{traceestimatetoapply}
\text{tr}(\K^{L}_{B(\xi,R/L)})=k_L\frac{\vol(B(\xi,R/L))}{\vol(M)}+
\frac{o(L^{m})}{L^{m}}.
\end{equation}

\subsection{Modified Concentration Operator}
From now on, we fix an $\eps>0$ and consider the transform $B^{\eps}_L$ defined 
in Section \ref{estimatesReprKernel} associated with the kernel
\[
B^{\eps}_L(z,w)=\sum^{k_L}_{i=1}\beta_{\eps}\left(\frac{\lambda_i}{L}\right)\phi_i(z)\phi_i(w),
\]
i.e. for all $f\in L^2(M)$,
\[
B^{\eps}_L(f)(z)=\int_{M}B^{\eps}_L(z,w)f(w)dV(w)
=\sum^{k_L}_{i=1}\beta_{\eps}\left(\frac{\lambda_i}{L}\right)\langle f,\phi_i\rangle\phi_i(z).
\]
\begin{defini}
The \textit{modified concentration operator} $T^{\eps}_{L,A}$, over a set $A\subset M$, is defined 
for $f_L\in E_L$ as:
\[
T^{\eps}_{L,A}f_L(z)=B^{\eps}_L(\chi_{A}\cdotÊB^{\eps}_L(f_L))(z)
=\int_{M}B^{\eps}_L(z,w)\chi_{A}(w)B^{\eps}_L(f_L)(w)dV(w).
\]
\end{defini}
Observe that for $\eps=0$, the modified concentration operator is just the classical 
concentration operator defined previously.\\
\\
An advantage of $T^{\eps}_{L,A}$ in contrast of $\K^{L}_{A}$ is that we have a nice estimate 
of its kernel: using Lemma \ref{kernelestimates}, we know that for any $N\geq m$, there exists 
a constant $C_N$ independent of $L$ such that
\[
|B^{\eps}_{L}(z,w)|\leq C_NL^{m}\frac{1}{(1+Ld(z,w))^{N}}, \quad \forall z,w\in M.
\]
The operator $T^{\eps}_{L,A}$ is self-adjoint and by the spectral theorem 
its eigenvalues are all real and $E_L$ has an orthonormal basis of eigenvectors of 
$T^{\eps}_{L,A}$. In fact, the main reason to do the first smooth projection in 
$T^{\eps}_{L,A}$ is to ensure the self-adjointness of the operator (but the calculations work 
even if we consider only $B^{\eps}_L(\chi_A\cdot)$).\\
\\
As before, we can compute the trace of $T^{\eps}_{L,A}$ and 
$T^{\eps}_{L,A}\circ T^{\eps}_{L,A}$ that will be used later on.
\begin{align*}
\tr(T^{\eps}_{L,A})=\sum^{k_L}_{i=1}\beta_{\eps}^2\left(\frac{\lambda_i}{L}\right)\int_{A}\phi^2_i(z)dV(z)
=:\int_{A}\tilde{B}^{\eps}_L(z,z)dV(z),
\end{align*}
where $\tilde{B}^{\eps}_L(z,w)$ is a kernel defined as
\[
\tilde{B}^{\eps}_L(z,w)=\sum^{k_L}_{i=1}\alpha\left(\frac{\lambda_i}{L}\right)\phi_i(z)\phi_i(w),
\]
with $\alpha(x):=\beta_{\eps}^2(x)$. Note that the function $\alpha$ has the same properties as $\beta_{\eps}$ 
and therefore we know that $\tilde{B}^{\eps}_L(z,w)$ has the estimate \eqref{generalkernelestimate}.\\
\\
Similarly we can compute the trace of $T^{\eps}_{L,A}\circ T^{\eps}_{L,A}$.
\begin{align*}
\tr(T^{\eps}_{L,A}\circ T^{\eps}_{L,A})
=\int_{A\times A}|\tilde{B}^{\eps}_L(z,w)|^2dV(z)dV(w).
\end{align*}
Since the modified concentration operator is a \textit{small} perturbation of $\K^{L}_{A}$, one 
can estimate $\tr(T^{\eps}_{L,A})$ in terms of $\tr(\K^{L}_{A})$. Indeed, using the definition 
of $\beta_{\eps}(x)$, 
\[
\tr(\mathcal K^{L(1-\eps)}_{A})\leq \tr(T^{\eps}_{L,A})\leq \tr(\mathcal K^{L}_{A}).
\]
Applying this computation to $A=A_{L}:=B(\xi,R/L)$ and using \eqref{traceestimatetoapply}, 
we get the following.
\[
\frac{\tr(T^{\eps}_{L,B(\xi,R/L)})}{k_L\frac{\vol(B(\xi,R/L))}{\vol(M)}}\geq 
\frac{k_{L(1-\eps)}\frac{\vol(B(\xi,R/L))}{\vol(M)}}{k_L\frac{\vol(B(\xi,R/L))}{\vol(M)}}
+\frac{o(L^{m}(1-\eps)^{m})}{L^{m}(1-\eps)^{m}}\frac{1}{k_L\frac{\vol(B(\xi,R/L))}{\vol(M)}}.
\]
Since $\vol(B(\xi,R/L))\simeq R^{m}/L^{m}$, the second term tends to 0 when $L\to \infty$. 
Thus, using the expression for $k_L$ (see Section \ref{estimatesReprKernel}), we get:
\begin{equation}\label{tracequotientbelow}
\liminf_{L\to\infty}\frac{\tr(T^{\eps}_{L,A_L})}{k_L\frac{\vol(B(\xi,R/L))}{\vol(M)}}
\geq (1-\eps)^{m}, \quad \forall \eps>0.
\end{equation}
The upper bound for this quantity is trivial since 
$\tr(T^{\eps}_{L,A_L})\leq \tr(\mathcal K^{L}_{A_L})$ and 
has been computed previously. Hence, using \eqref{traceestimatetoapply} we have
\begin{equation}\label{tracequotientabove}
\limsup_{L\to\infty}\frac{\tr(T^{\eps}_{L,A_L})}{k_L\frac{\vol(B(\xi,R/L))}{\vol(M)}}\leq 1.
\end{equation}
Similarly, if $\rho>0$ is a fixed number, then
\begin{equation}\label{modifiedtracequotientabove}
\limsup_{L\to\infty}\frac{\tr(T^{\eps}_{L(1+\rho),A_L})}{k_L\frac{\vol(B(\xi,R/L))}{\vol(M)}}
\leq (1+\rho)^{m}.
\end{equation}

\subsection{Proof of the main result}
In the spirit of the original work of Landau, the proof of Theorem \ref{charactdensitygeneral} 
relies on a trace estimate of $T^{\eps}_{L,A}$ and two technical lemmas 
(Lemma \ref{controlvapsconcentraciosamplinggeneral} and 
\ref{controlvapsconcentraciointerpolaciogeneral} below) that estimate the number of big 
eigenvalues of the modified concentration operator. First we state these 
technical results and show the proof of the main result and in Sections \ref{tracegeneral} 
and \ref{technicalresultsgeneral} we present a proof of them.\\
\\
The following result is an estimate of the difference of the trace of $T^{\eps}_{L,A_L}$ 
and $T^{\eps}_{L,A_L}\circ T^{\eps}_{L,A_L}$. It will show us, later on, that most of the 
eigenvalues are either close to 1 or to 0.
\begin{prop}\label{traceestimategeneral}
Let $A_L=B(\xi,R/L)$. Then
\[
\limsup_{L\to\infty}\left(\tr(T^{\eps}_{L,A_L})-\tr(T^{\eps}_{L,A_L}
\circ T^{\eps}_{L,A_L})\right)\leq C_{1}(1-(1-\eps)^{m})R^{m}+C_{2}R^{m-1},
\]
where $C_{1}$ (independent of $\eps$) and $C_{2}$ are constants independent of 
$L$ and $R$.\\
Similarly, if $\rho>0$ then
\begin{align*}
\limsup_{L\to\infty} & 
\left(\tr(T^{\eps}_{L(1+\rho),A_L})-\tr(T^{\eps}_{L(1+\rho),A_L}
\circ T^{\eps}_{L(1+\rho),A_L})\right)\\
& \leq C_{1}(1+\rho)^{m}(1-(1-\eps)^{m})R^{m}+C_{2}R^{m-1},
\end{align*}
where $C_{1}$ (independent of $\eps$ and $\rho$) and $C_{2}$ are constants 
independent of $L$ and $R$.
\end{prop}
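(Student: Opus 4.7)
The plan is to split the difference $\tr(T^{\eps}_{L,A_L}) - \tr(T^{\eps}_{L,A_L}\circ T^{\eps}_{L,A_L})$ into two non-negative pieces: a \emph{transition-band} contribution controlled by H\"ormander's spectral asymptotics, and an \emph{off-diagonal tail} controlled by the kernel estimate of Lemma \ref{kernelestimates}. The key algebraic identity, obtained by adding and subtracting $\int_{A_L}\int_M \tilde{B}^{\eps}_L(z,u)^2\,dV(u)\,dV(z)$ in the expression for $\tr(T) - \tr(T^2)$, reads
\begin{align*}
\tr(T^{\eps}_{L,A_L}) - \tr(T^{\eps}_{L,A_L}\circ T^{\eps}_{L,A_L})
&= \int_{A_L}\Bigl[\tilde{B}^{\eps}_L(z,z) - \int_M \tilde{B}^{\eps}_L(z,u)^2\,dV(u)\Bigr] dV(z) \\
&\quad + \int_{A_L}\int_{M\setminus A_L} \tilde{B}^{\eps}_L(z,u)^2\,dV(u)\,dV(z) \;=:\; I_1 + I_2,
\end{align*}
and both terms are non-negative (for $I_1$ this uses the spectral expansion below).

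For $I_1$, expanding in the eigenbasis the integrand equals $\sum_i \alpha(\lambda_i/L)(1-\alpha(\lambda_i/L))\,\phi_i(z)^2$ with $\alpha = \beta_\eps^2$. Since $\alpha(1-\alpha)\in[0,1]$ and vanishes outside $(1-\eps,1)$, I would dominate this pointwise by $K_L(z,z) - K_{L(1-\eps)}(z,z)$, integrate over $A_L$, and apply H\"ormander's estimate $K_L(z,z) = \sigma_m L^m/(2\pi)^m + O(L^{m-1})$ uniformly in $z$ together with $\vol(A_L)\simeq R^m/L^m$. Letting $L\to\infty$ produces $C_1(1-(1-\eps)^m)R^m + O(R^m/L)$, which matches the first term of the desired bound.

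For $I_2$, Lemma \ref{kernelestimates} applied with $H = \alpha$ yields $|\tilde{B}^{\eps}_L(z,u)| \leq C_N L^m(1+Ld(z,u))^{-N}$ for any $N\geq m$, so
\[
I_2 \leq C_N^2 L^{2m}\int_{A_L}\int_{M\setminus A_L}(1+Ld(z,u))^{-2N}\,dV(u)\,dV(z).
\]
Rescaling by $L$ in geodesic normal coordinates at $\xi$ (valid since $R/L\to 0$ so $A_L$ shrinks into a neighbourhood on which the metric is $1+o(1)$ Euclidean) collapses the factor $L^{2m}$ against the measures and produces, up to $1+o(1)$, a Euclidean integral of the form
\[
\int_{\mathbb B(0,R)}\int_{\R^m\setminus \mathbb B(0,R)}(1+|x-y|)^{-2N}\,dx\,dy.
\]
Splitting by the distance from $\partial\mathbb B(0,R)$ (dyadic annuli inside the ball, polynomial tail outside) and choosing $N$ so that $2N > m+1$ bounds this by $C_2 R^{m-1}$.

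The variant with $L$ replaced by $L(1+\rho)$ (while keeping $A_L = B(\xi,R/L)$) follows the same recipe: rescaling by $L' = L(1+\rho)$ now sends $A_L$ to the Euclidean ball of radius $R(1+\rho)$, which inserts the factor $(L'/L)^m = (1+\rho)^m$ in front of $(1-(1-\eps)^m)R^m$ in the $I_1$-estimate, and an $(1+\rho)^{m-1}$ factor in the $I_2$-estimate that can be absorbed into $C_2$. The main obstacle is the rescaling step in $I_2$: making the passage from the manifold integral to its Euclidean model rigorous with constants uniform in $L$ and extracting the sharp $R^{m-1}$ decay requires a careful dyadic decomposition of $M\setminus A_L$ and a check that the distortion of the Riemannian volume and distance over $B(\xi,R/L)$ contributes only $1+o(1)$.
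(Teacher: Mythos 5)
Your decomposition into $I_1+I_2$, the pointwise bound $\alpha(1-\alpha)\le 1$ on $(1-\eps,1)$ combined with H\"ormander's estimate for $I_1$, and the kernel-decay-plus-local-coordinates argument for $I_2$ are exactly the route the paper takes (the $I_2$ bound is stated there as Lemma \ref{estimateintegralkernel}, proved via Lemma \ref{kernelestimates} and local coordinates). The proposal is correct and essentially identical to the paper's proof, including the treatment of the $L(1+\rho)$ variant.
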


Given $L\geq 1$ and $R>0$, let $A_L,A^{+}_L=A^{+}_L(t)$ and $A^{-}_L=A^{-}_L(t)$ be the balls centered 
at a fixed point $\xi\in M$ and radius $R/L$, $(R+t)/L$ and $(R-t)/L$, respectively, 
where $t$ is a parameter such that $s<<t<<R<<L$ and $s$ is the separation constant of the family $\Z$. The value of 
$t$ will be chosen later on. We denote the eigenvalues of the 
modified concentration operator $T^{\eps}_{L,A_L}$ as
\[
1>\lambda^{L}_{1}\geq \cdotsÊ\geq \lambda^{L}_{k_L}>0.
\]
\begin{lemma}\label{controlvapsconcentraciosamplinggeneral}
Let $\Z$ be an $s$-uniformly separated $L^{2}$-\textup{M-Z} family. Then there exist $t_{0}=t_{0}(M,s)>0$ 
and a constant $0<\gamma<1$ (independent of $\eps$, $R$ and $L$) 
such that for all $t\geq t_{0}$,
\[
\lambda^{L}_{N_L+1}\leq \gamma,
\]
where
\[
N_L:=N_L(t)=\#(\Z(L)\cap A^{+}_L)=\#(\Z(L)\cap B(\xi, (R+t)/L)).
\]
\end{lemma}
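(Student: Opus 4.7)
The plan is to invoke the min–max characterization of eigenvalues: I will build a subspace $W\subset E_L$ of codimension at most $N_L$ on which the Rayleigh quotient of $T^{\eps}_{L,A_L}$ is uniformly bounded above by some $\gamma<1$ independent of $L$, $R$, $\eps$ and $t$. The natural candidate is
\[
W=\{f\in E_L:\ (B^{\eps}_Lf)(z_{Lj})=0\ \text{for every}\ z_{Lj}\in\Z(L)\cap A^{+}_L\}.
\]
Since $B^{\eps}_L$ is self-adjoint on $L^2(M)$, the condition $(B^{\eps}_Lf)(z_{Lj})=0$ is equivalent to $\langle f,B^{\eps}_LK_L(z_{Lj},\cdot)\rangle=0$, so $W$ is cut out of $E_L$ by exactly $N_L$ linear conditions and $\dim W\ge k_L-N_L$.

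For $f\in W$, set $g:=B^{\eps}_Lf\in E_L$. Unfolding the definition of $T^{\eps}_{L,A_L}$ and using self-adjointness,
\[
\langle T^{\eps}_{L,A_L}f,f\rangle=\int_{A_L}|g|^2\,dV,\qquad \|g\|_2\le\|f\|_2.
\]
By construction $g$ vanishes on $\Z(L)\cap A^{+}_L$, so the M-Z hypothesis applied to $g$ gives
\[
\|g\|_2^{2}\ \le\ \frac{C_{\text{MZ}}}{k_L}\sum_{j:\,z_{Lj}\notin A^{+}_L}|g(z_{Lj})|^2.
\]
The crucial step is to dominate this sum by a multiple of $k_L\int_{M\setminus A_L}|g|^2$. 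Using the uniform separation of $\Z$ together with a localized Plancherel–P\'olya estimate for $E_L$ (a consequence of the rapid off-diagonal decay of the smoothed kernel in Lemma \ref{kernelestimates} combined with Bernstein's inequality \eqref{BernsteinIneqforEL}), one has, for every $h\in E_L$ and every subset $V\subset M$,
\[
\sum_{z_{Lj}\in V}|h(z_{Lj})|^2\ \lesssim\ k_L\int_{V^{(t/L)}}|h|^2\,dV,
\]
where $V^{(t/L)}$ denotes the $t/L$-neighborhood of $V$ and the implicit constant depends only on $M$, $s$ and $t$ (not on $L$). Applied to $V=\Z(L)\setminus A^{+}_L$, the triangle inequality shows $V^{(t/L)}\subset M\setminus A_L$ as soon as $t$ is at least on the order of the separation $s$, so
\[
\sum_{j:\,z_{Lj}\notin A^{+}_L}|g(z_{Lj})|^2\ \lesssim\ k_L\int_{M\setminus A_L}|g|^2\,dV.
\]

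Combining the two displayed inequalities produces $\|g\|_2^{2}\le C\bigl(\|g\|_2^{2}-\int_{A_L}|g|^2\bigr)$ with $C>1$ depending only on $M$, $s$ and the M-Z constant of $\Z$; rearranging,
\[
\langle T^{\eps}_{L,A_L}f,f\rangle=\int_{A_L}|g|^2\ \le\ (1-1/C)\,\|g\|_2^{2}\ \le\ \gamma\,\|f\|_2^{2},
\]
with $\gamma:=1-1/C<1$, and min–max then delivers $\lambda^L_{N_L+1}\le\gamma$. The constant $\gamma$ is visibly independent of $L$, $R$ and $\eps$, and it is enough to take $t_0=t_0(M,s)$ so that the $t/L$-neighborhood of $\Z(L)\setminus A^{+}_L$ sits inside $M\setminus A_L$.

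The main technical hurdle is the localized Plancherel–P\'olya estimate above: the global version (Theorem \ref{unionsepfam}) is already at hand, but the localization requires the polynomial decay $|\tilde B^{\eps}_L(z,w)|\lesssim L^m(1+Ld(z,w))^{-N}$ from Lemma \ref{kernelestimates} with $N$ chosen sufficiently large (in terms of $m$ and $s$) to kill the contribution of points of $\Z$ lying outside the neighborhood $V^{(t/L)}$. This is also where the precise shape of $t_0(M,s)$ is dictated by the separation constant, and the only place where uniform separation is used quantitatively rather than merely through Theorem \ref{unionsepfam}.
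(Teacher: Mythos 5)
Your overall strategy coincides with the paper's: both arguments build a test function (or subspace) in $E_L$ whose image under $B^{\eps}_L$ vanishes on $\Z(L)\cap A^{+}_L$, apply the M-Z inequality to that image, convert the resulting sample sum into an $L^2$ bound localized to $M\setminus A_L$, and then read off $\lambda^L_{N_L+1}\leq\gamma$ from a variational characterization (you use min--max on $W$; the paper picks a nonzero $f_L=\sum_{j\le N_L+1}c^L_jG^L_j$ with $B^\eps_Lf_L$ vanishing on those points and compares eigenvalues directly --- the two are equivalent by a dimension count). So the skeleton is right.

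The gap is in the displayed ``localized Plancherel--P\'olya estimate''
\[
\sum_{z_{Lj}\in V}|h(z_{Lj})|^2\ \lesssim\ k_L\int_{V^{(t/L)}}|h|^2\,dV,
\]
which is not true as stated. Functions in $E_L$ are band-limited, not compactly supported, so writing $h(z_{Lj})=\int_M B_{2L}(z_{Lj},w)h(w)\,dV(w)$ inevitably produces a tail from $w\notin V^{(t/L)}$. For $h$ whose mass sits essentially inside $A_L$, the right-hand side $k_L\int_{M\setminus A_L}|h|^2$ can be arbitrarily small while the left-hand side retains a polynomially-in-$t$ small but nonzero contribution proportional to $\int_{A_L}|h|^2$. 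The correct statement (the paper's Lemma \ref{construcciocassampling}) keeps this leakage as a separate term:
\[
\frac{1}{k_L}\sum_{z_{Lj}\notin A^{+}_L}|h(z_{Lj})|^2\ \le\ C_1\int_{M\setminus A_L}|h|^2\ +\ C_2\,\eta\int_{A_L}|h|^2,
\]
where $\eta=\eta(t)\to 0$ as $t\to\infty$. This error term is not a decorative refinement: it is the actual reason $t_0$ must be large, not the purely geometric condition that $t/L$-neighborhoods sit inside $M\setminus A_L$ (that already holds for any $t>0$). Fortunately it is harmless for your argument --- after applying the M-Z inequality you get $\|g\|_2^2\le C_{\mathrm{MZ}}C_1\int_{M\setminus A_L}|g|^2 + C_{\mathrm{MZ}}C_2\eta\|g\|^2_2$, and choosing $\eta$ (hence $t_0$) small enough to absorb the last term yields exactly the inequality $\|g\|^2_2\le C_3\int_{M\setminus A_L}|g|^2$ that the rest of your proof requires. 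With the lemma restated in this form, including the absorption step, the rest of your argument is correct and gives $\gamma=1-1/C_3$ independent of $\eps$, $R$, $L$.
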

\begin{remark}\label{remarksamplinggeneral}
In the conditions of Lemma \ref{controlvapsconcentraciosamplinggeneral},
\[
\#\left\{\lambda^{L}_{j}>\gamma\right\}\leq N_L=\#(\Z(L)\cap A^{+}_L)\leq 
\#(\Z(L)\cap A_L)+O(R^{m-1}), R\to\infty,
\]
where the constant in $O(R^{m-1})$ does not depend on $L$.
\end{remark}
\begin{proof}
The first inequality is trivial by Lemma \ref{controlvapsconcentraciosamplinggeneral} and 
the second inequality follows using the separation of the family $\Z$. Moreover, 
$N_L\lesssim R^m/s^m$.
\end{proof}

\begin{lemma}\label{controlvapsconcentraciointerpolaciogeneral}
Let $\Z$ be an $L^{2}$-interpolating family with separation constant $s$ and $\rho>0$. Then there exist
$t_{1}=t_{1}(M,s)>0$ and a constant $0<\delta<1$ independent of $\eps$, $R$ and $L$ 
such that for all $t\geq t_{1}$,
\[
\lambda^{L(1+\rho)}_{n_L-1}\geq \delta:=C\beta_{\eps}^2\left(\frac{1}{1+\rho}\right),
\]
where $\lambda^{L(1+\rho)}_{k}$ are the eigenvalues associated to $T^{\eps}_{L(1+\rho),A_L}$, 
$C$ is independent of $\rho$ and $\eps$ and 
\[
n_L:=n_L(t)=\#(\Z(L)\cap A^{-}_{L})=\#(\Z(L)\cap B(\xi,(R-t)/L)).
\]
\end{lemma}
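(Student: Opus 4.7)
The plan is to invoke the Courant--Fischer min--max principle for the self-adjoint operator $T^{\eps}_{L(1+\rho),A_L}$: it suffices to produce a subspace $W\subset E_{L(1+\rho)}$ with $\dim W\geq n_L-1$ on which the Rayleigh quotient is at least $\delta$. Because $\Z$ is interpolating, the normalised reproducing kernels $\{\tilde K_L(z_{Lj},\cdot)\}_j$ form a Riesz sequence in $E_L$ with constants independent of $L$ (remark after Definition~\ref{definitionInterp}). Setting $J^{-}=\{j:z_{Lj}\in A^{-}_L\}$, I would take
\[
W=\operatorname{span}\{\tilde K_L(z_{Lj},\cdot):j\in J^{-}\}\subset E_L\subset E_{L(1+\rho)},
\]
so that $\dim W=n_L$ and $\|v\|_{2}^{2}\simeq\sum_{j\in J^{-}}|c_j|^{2}$ for $v=\sum c_j\tilde K_L(z_{Lj},\cdot)\in W$.

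Writing $P=B^{\eps}_{L(1+\rho)}$, I would split
\[
\langle T^{\eps}_{L(1+\rho),A_L}v,v\rangle=\|Pv\|_{2}^{2}-\int_{M\setminus A_L}|Pv|^{2}\,dV.
\]
For $v\in E_L$, monotonicity of $\beta_{\eps}$ and $\lambda_{i}\leq L\leq L(1+\rho)$ give $\beta_{\eps}(\lambda_{i}/(L(1+\rho)))\geq\beta_{\eps}(1/(1+\rho))$, so $\|Pv\|_{2}^{2}\geq\beta_{\eps}^{2}(1/(1+\rho))\|v\|_{2}^{2}$. This is the source of the factor $\beta_{\eps}^{2}(1/(1+\rho))$ in $\delta$.

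The substantive part is the leakage $\int_{M\setminus A_L}|Pv|^{2}$. I would express it as $c^{*}Bc$ with
\[
B_{jk}=\int_{M\setminus A_L}P\tilde K_L(z_{Lj},\cdot)\,\overline{P\tilde K_L(z_{Lk},\cdot)}\,dV,
\]
and bound $\|B\|_{\ell^{2}\to\ell^{2}}$ by a Schur test. Two inputs are crucial: first, the buffer $d(z,z_{Lj})\geq t/L$ that holds for every $z\in M\setminus A_L$ and $z_{Lj}\in A^{-}_L$; second, the super-polynomial kernel decay supplied by Lemma~\ref{kernelestimates} applied to the rescaled multiplier $\beta_{\eps}^{2}$ at frequency $L(1+\rho)$. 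Combining these two with Cauchy--Schwarz (in the first case) and a standard convolution-type bound on the kernel (in the second) yields $|B_{jk}|\lesssim\min\{t^{m-2N},(1+Ld(z_{Lj},z_{Lk}))^{-N}\}$. Splitting the row sum $\sum_{k}|B_{jk}|$ at the threshold $d(z_{Lj},z_{Lk})=t/L$ and using the $s$-separation of $\Z$ both to count $\#\{k:d(z_{Lj},z_{Lk})\leq t/L\}\lesssim(t/s)^{m}$ and to compare the tail sum with a continuous integral, one obtains $\sum_{k}|B_{jk}|\lesssim s^{-m}t^{m-N}$ for $N>m$, uniformly in $R$, $L$, $\xi$ and the index $j$. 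Through the Riesz property this gives $\int_{M\setminus A_L}|Pv|^{2}\lesssim s^{-m}t^{m-N}\|v\|_{2}^{2}$.

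Choosing $t_{1}=t_{1}(M,s)$ large enough that $s^{-m}t_{1}^{m-N}$ is at most half of $\beta_{\eps}^{2}(1/(1+\rho))$ then yields $\langle T^{\eps}_{L(1+\rho),A_L}v,v\rangle\geq\tfrac{1}{2}\beta_{\eps}^{2}(1/(1+\rho))\|v\|_{2}^{2}$ for every $v\in W$, and the claim follows from min--max with $C=\tfrac12$. The main obstacle is precisely the Schur row-sum estimate: a crude Cauchy--Schwarz over the $n_L\simeq(R/s)^{m}$ sample points would introduce a factor of $n_L$ and force $t_{1}$ to depend on $R$; avoiding this requires exploiting the full convolution-type off-diagonal decay of $P$ in addition to the pointwise leakage bound, so that only the separation $s$ (and not $n_L$) enters the row sums.
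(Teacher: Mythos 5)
Your plan takes a genuinely different route from the paper's, but the central step does not go through. The paper never works with the reproducing kernels $\tilde K_L(z_{Lj},\cdot)$ at all: it uses the stable interpolation property (Lemma~\ref{propinterpfunc}) to build functions $f_j\in e(L)$ with $f_j(z_{Lj'})=\delta_{jj'}$, sets $h_j:=(B^{\eps}_{L(1+\rho)})^{-1}f_j$, and takes $F=\operatorname{span}\{h_j:j\in\I\}$ as the test subspace. For $f_L\in F$ the image $g_L=B^{\eps}_{L(1+\rho)}f_L=\sum c_jf_j$ vanishes at every sample point \emph{outside} $A^{-}_L$, so the interpolation bound $\|g_L\|_2^2\lesssim k_L^{-1}\sum_{j\in\I}|g_L(z_{Lj})|^2$ feeds directly into Lemma~\ref{construcciocasinterpolacio} and yields $\|g_L\|_2^2\le C_1\int_{A_L}|g_L|^2$ with $C_1$ independent of $\eps,\rho$. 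Nowhere does the argument need pointwise off-diagonal decay of $K_L$.

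The gap in your version is precisely at the point you flag as "the substantive part''. You need super-polynomial decay for $P\tilde K_L(z_{Lj},\cdot)$, but
\[
P\tilde K_L(z_{Lj},\cdot)(w)=\frac{1}{\sqrt{K_L(z_{Lj},z_{Lj})}}\sum_{\lambda_i\le L}\beta_{\eps}\!\left(\frac{\lambda_i}{L(1+\rho)}\right)\phi_i(z_{Lj})\phi_i(w),
\]
so the relevant spectral multiplier (at frequency $L$) is $H(x)=\beta_{\eps}(x/(1+\rho))\,\chi_{[0,1]}(x)$, which jumps at $x=1$ with the nonzero value $\beta_{\eps}(1/(1+\rho))$. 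Lemma~\ref{kernelestimates} requires $H\in\mathcal C^N$ with $N\ge m$, so it simply does not apply to this kernel. In fact the hard cutoff $K_L$ on a general compact manifold has only polynomial decay of order about $(1+Ld(z,w))^{-(m+1)/2}$, which is not enough for your Schur row sums to stay bounded independently of $n_L\simeq(R/s)^m$. So the estimate $|B_{jk}|\lesssim\min\{t^{m-2N},(1+Ld(z_{Lj},z_{Lk}))^{-N}\}$, and hence the whole leakage bound, is unsupported. A secondary issue is that you choose $t_1$ so that $s^{-m}t_1^{m-N}\le\tfrac12\beta_{\eps}^2(1/(1+\rho))$, which makes $t_1$ depend on $\eps$ and $\rho$, contradicting the requirement $t_1=t_1(M,s)$; the paper sidesteps this by bounding $\int_{M\setminus A_L}|g_L|^2$ as a fixed fraction of $\|g_L\|_2^2=\|Pv\|_2^2$ (not of $\|v\|_2^2$), so the $\beta_{\eps}^2(1/(1+\rho))$ factor is introduced only at the last step via $\|Pv\|_2^2\ge\beta_{\eps}^2(1/(1+\rho))\|v\|_2^2$.
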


\begin{remark}\label{remarkinterpolaciogeneral}
In the conditions of Lemma \ref{controlvapsconcentraciointerpolaciogeneral} we have
\[
\#(\Z(L)\cap A_L)-O(R^{m-1})\leq n_L=\#(\Z(L)\cap A^{-}_L)
\leq \#\left\{\lambda^{L(1+\rho)}_{j}\geq \delta\right\}+1,
\]
where the constant in $O(R^{m-1})$ does not depend on $L$.
\end{remark}

\begin{proof}
The second inequality is trivial by Lemma \ref{controlvapsconcentraciointerpolaciogeneral} 
and the first inequality follows using the separation of $\Z$.
\end{proof}

In what follows, we pick the parameter $t$ in the range $\max(t_{0},t_{1})\leq t<<R$, where $t_{0}$ 
and $t_{1}$ are the values given by Lemmas \ref{controlvapsconcentraciosamplinggeneral} and 
\ref{controlvapsconcentraciointerpolaciogeneral}.\\
\\
Now we have all the tools in order to prove the main result concerning the notion 
of densities.

\begin{proof}[Proof of Theorem \ref{charactdensitygeneral}]
Assume $\Z$ is an $L^2$-\textup{M-Z} family. Without loss of generality, we may assume 
that $\Z$ is uniformly separated (see Theorem \ref{MZseparated}). 
Consider the following measures:
\[
d\mu_L=\sum^{k_L}_{j=1}\delta_{\lambda^{L}_{j}}.
\]
Note that
\[
\text{tr}(T^{\eps}_{L,A_L})=\int^{1}_{0}xd\mu_L(x),\quad
\text{tr}(T^{\eps}_{L,A_L}\circ T^{\eps}_{L,A_L})=\int^{1}_{0}x^2d\mu_L(x).
\]
Let $\gamma$ be given by Lemma \ref{controlvapsconcentraciosamplinggeneral}. We have
\begin{align*}
\#\left\{\lambda^{L}_{j} >\gamma\right\} & =\int^{1}_{\gamma}d\mu_L(x)\geq
\int^{1}_{0}xd\mu_L(x)-\frac{1}{1-\gamma}\int^{1}_{0}x(1-x)d\mu_L(x)\\
& = \text{tr}(T^{\eps}_{L,A_L})
-\frac{1}{1-\gamma}(\text{tr}(T^{\eps}_{L,A_L})-\text{tr}(T^{\eps}_{L,A_L}\circ T^{\eps}_{L,A_L})),
\end{align*}
Using the remark following Lemma \ref{controlvapsconcentraciosamplinggeneral} and 
\eqref{tracequotientbelow}, we have
\begin{align*}
& \liminf_{L\to\infty}\frac{\#(\Z(L)\cap A_L)
+O(R^{m-1})}{k_L\frac{\vol(B(\xi,R/L))}{\vol(M)}}\\
& \geq \liminf_{L\to\infty}\left[\frac{\text{tr}(T^{\eps}_{L,A_L})}
{k_L\frac{\vol(B(\xi,R/L))}{\vol(M)}}
-\frac{1}{1-\gamma}\frac{\text{tr}(T^{\eps}_{L,A_L})-\text{tr}(T^{\eps}_{L,A_L}\circ T^{\eps}_{L,A_L})}
{k_L\frac{\vol(B(\xi,R/L))}{\vol(M)}}\right]\\
& \geq (1-\eps)^{m}-
\frac{1}{1-\gamma}\limsup_{L\to\infty}
\frac{\text{tr}(T^{\eps}_{L,A_L})-\text{tr}(T^{\eps}_{L,A_L}\circ T^{\eps}_{L,A_L})}
{k_L\frac{\vol(B(\xi,R/L))}{\vol(M)}}
\end{align*}
Observe that
\begin{equation}\label{comparisondenominatorgeneral}
k_L\frac{\vol(B(\xi,R/L))}{\vol(M)}\simeq R^{m}.
\end{equation}
Applying \eqref{comparisondenominatorgeneral} and Proposition \ref{traceestimategeneral}, we have
\begin{align*}
& \liminf_{L\to\infty}\frac{\#(\Z(L)\cap A_L)+O(R^{m-1})}{k_L\frac{\vol(B(\xi,R/L))}{\vol(M)}}\\
& \geq (1-\eps)^{m}
-\frac{C}{1-\gamma}\frac{\limsup_{L\to\infty}(\text{tr}(T^{\eps}_{L,A_L})
-\text{tr}(T^{\eps}_{L,A_L}\circ T^{\eps}_{L,A_L}))}{R^m}\\
& \geq (1-\eps)^{m}-\frac{C}{1-\gamma}(1-(1-\eps)^{m})-\frac{1}{1-\gamma}\frac{O(R^{m-1})}{R^m}.
\end{align*}
Taking inferior limits when $R\to \infty$ in the last estimate, we get that
\[
D^{-}(\Z)\geq (1-\eps)^{m}-\frac{C}{1-\gamma}(1-(1-\eps)^{m})Ê\quad \forall \eps>0,
\]
where $C$ and $\gamma$ are independent of $\eps$. Therefore, letting $\eps\to 0$ we get 
the claimed result:
\[
D^{-}(\Z)\geq 1.
\]
Assume now that $\Z$ is an $L^2$-interpolating family, in particular it is uniformly separated by 
Proposition \ref{interpsep}. Fix $\rho>0$. Let $\delta>0$ be the value given by Lemma 
\ref{controlvapsconcentraciointerpolaciogeneral}.
\begin{align*}
& \#\left\{\lambda^{L(1+\rho)}_{j}\geq \delta\right\}
\leq \frac{-1}{\delta}\text{tr}(T^{\eps}_{L(1+\rho),A_L}\circ T^{\eps}_{L(1+\rho),A_L})+
\frac{1+\delta}{\delta}\text{tr}(T^{\eps}_{L(1+\rho),A_L})\\
& =\text{tr}(T^{\eps}_{L(1+\rho),A_L})+\frac{1}{\delta}(\text{tr}(T^{\eps}_{L(1+\rho),A_L})
-\text{tr}(T^{\eps}_{L(1+\rho),A_L}\circ T^{\eps}_{L(1+\rho),A_L})).
\end{align*}
Using the remark following Lemma \ref{controlvapsconcentraciointerpolaciogeneral}, 
\eqref{comparisondenominatorgeneral}, \eqref{modifiedtracequotientabove} and 
Proposition \ref{traceestimategeneral} we have
\begin{align*}
& \limsup_{L\to\infty}\frac{\#(\Z(L)\cap A_L)-O(R^{m-1})}{k_L\frac{\vol(B(\xi,R/L))}{\vol(M)}}
\leq \limsup_{L\to\infty}\frac{\text{tr}(T^{\eps}_{L(1+\rho),A_L})}{k_L\frac{\vol(B(\xi,R/L))}{\vol(M)}}\\
& +\frac{1}{\delta}\limsup_{L\to\infty}
\frac{\text{tr}(T^{\eps}_{L(1+\rho),A_L})-\text{tr}(T^{\eps}_{L(1+\rho),A_L}\circ T^{\eps}_{L(1+\rho),A_L})}
{k_L\frac{\vol(B(\xi,R/L))}{\vol(M)}}+\frac{C_1}{R^m}\\
& \leq (1+\rho)^{m}+\frac{C(1+\rho)^{m}}{\delta}(1-(1-\eps)^{m})+
\frac{1}{\delta}\frac{O(R^{m-1})}{R^m}+\frac{C_1}{R^{m}}.
\end{align*}
Taking superior limits in $R\to\infty$ in the last estimate and using the expression for $\delta$, we get
\begin{align*}
D^{+}(\Z) \leq (1+\rho)^{m}+\frac{C(1+\rho)^{m}}{\beta_{\eps}^2\left(\frac{1}{1+\rho}\right)}(1-(1-\eps)^{m}), 
\quad \forall \eps,\rho>0,
\end{align*}
where $C$ is independent of $\eps>0$ and $\rho$. Thus, taking limits in $\eps\to 0$ and then in $\rho\to 0$, 
we get the claimed result:
\[
D^{+}(\Z)\leq 1.
\]
\end{proof}

\subsection{Trace estimate}\label{tracegeneral}
In this section, we prove Proposition \ref{traceestimategeneral}. For this purpose, 
we need the following computation.
\begin{lemma}\label{estimateintegralkernel}
Let $H:[0,\infty)\to[0,1]$ be a function of class $\mathcal C^{\infty}$ with compact support in 
$[0,1]$. Let $B(\xi,R/L)$ be a ball in $M$. Then
\begin{align*}
I:=\int_{B(\xi,R/L)}\int_{M\setminus B(\xi,R/L)}&
\left|\sum^{k_L}_{i=1}H(\lambda_i/L)\phi_{i}(z)\phi_i(w)\right|^2dV(w)dV(z)\\
& \leq CR^{m-1},
\end{align*}
where $C$ is independent of $L$ and $R$.
\end{lemma}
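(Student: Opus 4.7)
The plan is to estimate $I$ by combining the pointwise decay of the kernel supplied by Lemma~\ref{kernelestimates} with a careful integration in geodesic polar coordinates on $M$. Write $K_H(z,w) := \sum_i H(\lambda_i/L)\phi_i(z)\phi_i(w)$. By Lemma~\ref{kernelestimates}, for every $N \geq m$ one has
\[
|K_H(z,w)|^2 \leq C_N^2\, L^{2m}(1+Ld(z,w))^{-2N},
\]
and $N$ will be chosen large enough at the end.

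First I would do the inner integral in $w$. For fixed $z \in B(\xi,R/L)$, set $\rho(z) := R/L - d(z,\xi) \in [0,R/L]$. When $w \notin B(\xi,R/L)$ the triangle inequality yields $d(z,w) \geq d(w,\xi) - d(z,\xi) \geq \rho(z)$. Introducing geodesic polar coordinates centered at $z$ and substituting $u = Ld(z,w)$ would give
\[
\int_{M \setminus B(\xi,R/L)} |K_H(z,w)|^2\, dV(w) \;\leq\; C L^m \int_{L\rho(z)}^{\infty} u^{m-1}(1+u)^{-2N}\,du \;\leq\; C_N' L^m (1+L\rho(z))^{-(2N-m)}.
\]

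Next I would perform the outer integral in $z$ via geodesic polar coordinates centered at $\xi$, using $s = d(z,\xi) \in [0,R/L]$ and $dV(z) \simeq s^{m-1}\,ds\,d\sigma$. The change of variables $t = L(R/L - s) = R - Ls$ then gives
\[
I \;\leq\; C \int_0^R (1+t)^{-(2N-m)}(R-t)^{m-1}\,dt \;\leq\; C R^{m-1}\int_0^{\infty}(1+t)^{-(2N-m)}\,dt \;\leq\; C'' R^{m-1},
\]
valid as soon as $2N - m > 1$, say $N \geq m+1$. This yields the desired estimate with a constant independent of $L$ and $R$.

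The main obstacle, and essentially the only non-routine step, is the justification that the use of geodesic polar coordinates produces constants uniform in $L$ and $\xi$. This is handled by the compactness and smoothness of $M$: the injectivity radius is bounded below by some $r_0 > 0$, so for $L$ large the ball $B(\xi,R/L)$ lies well inside a normal neighborhood where the volume density is comparable to the Euclidean one with uniform constants, while the contribution to the radial integral in $w$ coming from $d(z,w) \geq r_0$ is dominated by the fast decay $(1+u)^{-2N}$ and is therefore negligible compared to $R^{m-1}$. Once this geometric point is dispatched, the remaining argument consists of the two elementary one-dimensional integrals displayed above.
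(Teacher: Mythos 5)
Your proof is correct and follows essentially the same route the paper has in mind: the paper states only that the result ``follows by using Lemma~\ref{kernelestimates} and working in local coordinates,'' and that is exactly what you do, supplying the explicit computations (pointwise decay of the kernel, inner integral in geodesic polar coordinates around $z$ to produce the factor $L^m(1+L\rho(z))^{-(2N-m)}$, outer integral in polar coordinates around $\xi$ with the substitution $t=R-Ls$). Your reduction to the two one-dimensional integrals is sound; note also that $N\geq m$, the hypothesis already in Lemma~\ref{kernelestimates}, suffices since then $2N-m\geq m\geq 2>1$, and your final bound $\int_0^R(1+t)^{-(2N-m)}(R-t)^{m-1}\,dt$ is $\lesssim R^{m-1}$ for $R\geq 1$ and $\lesssim R^m\leq R^{m-1}$ for $R<1$, so the estimate holds for all $R$.
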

The proof follows by using Lemma \ref{kernelestimates} and working in local coordinates.
\begin{proof}[Proof of Proposition \ref{traceestimategeneral}]
Let $A=B(\xi,R/L)$. Recall the definition of the kernels $B^{\eps}_{L}(z,w)$ and 
$\tilde{B}^{\eps}_{L}(z,w)$:
\[
B^{\eps}_{L}(z,w)=\sum^{k_L}_{i=1}\beta_{\eps}\left(\frac{\lambda_i}{L}\right)\phi_{i}(z)\phi_{i}(w),
\]
\[
\tilde{B}^{\eps}_{L}(z,w)=\sum^{k_L}_{i=1}\alpha\left(\frac{\lambda_i}{L}\right)\phi_i(z)\phi_i(w)
:=\sum^{k_L}_{i=1}\beta_{\eps}^2\left(\frac{\lambda_i}{L}\right)\phi_i(z)\phi_i(w).
\]
First, we will compute the trace of $T^{\eps}_{L,A}\circ T^{\eps}_{L,A}$.
\begin{align*}
& \tr(T^{\eps}_{L,A}\circ T^{\eps}_{L,A})=\int_{A\times A}|\tilde{B}^{\eps}_{L}(z,w)|^2dV(w)dV(z)\\
& =\int_{A}\sum^{k_L}_{i=1}\alpha^2\left(\frac{\lambda_i}{L}\right)\phi^2_i(z)dV(z)
-\int_{A}\int_{M\setminus A}|\tilde{B}^{\eps}_{L}(z,w)|^2dV(w)dV(z).
\end{align*}
Thus, we have
\begin{align*}
& \tr(T^{\eps}_{L,A})-\tr(T^{\eps}_{L,A}\circ T^{\eps}_{L,A})
=\int_{A}\sum^{k_L}_{i=1}\left[\alpha\left(\frac{\lambda_i}{L}\right)
-\alpha^2\left(\frac{\lambda_i}{L}\right)\right]\phi^2_i(z)dV(z)\\
&+\int_{A}\int_{M\setminus A}|\tilde{B}^{\eps}_{L}(z,w)|^2dV(w)dV(z)=:I_{1}+I_{2}.
\end{align*}
By Lemma \ref{estimateintegralkernel}, $I_{2}=O(R^{m-1})$ with constants independent of $L$ 
(the constant may depend on $\eps$). Now we need to estimate $I_1$. Note 
that $\alpha(x)\equiv 1$ for $0\leq 0\leq 1-\eps$. Hence,
\begin{align*}
I_1 & =\int_{A}\sum_{\lambda_i\in (L(1-\eps),L]}\left[\alpha\left(\frac{\lambda_i}{L}\right)
-\alpha^2\left(\frac{\lambda_i}{L}\right)\right]\phi^2_i(z)dV(z)\\
& \leq \int_{A}\sum_{\lambda_i\in (L(1-\eps),L]}\phi^2_{i}(z)dV(z)
=\int_{A}(K_L(z,z)-K_{L(1-\eps)}(z,z))dV(z).
\end{align*}
Using the expression of the reproducing kernel (see Section \ref{estimatesReprKernel}), we 
obtain:
\[
K_L(z,z)-K_{L(1-\eps)}(z,z)=c_{m}L^{m}(1-(1-\eps)^{m})
+O(L^{m-1})(1-(1-\eps)^{m-1}).
\]
Thus,
\begin{align*}
I_1 & \leq c_m(1-(1-\eps)^{m})L^{m}\vol(B(\xi,R/L))+
\frac{o(L^{m})}{L^{m}}(1-(1-\eps)^{m-1})\\
& \leq C(1-(1-\eps)^{m})R^{m}
+\frac{o(L^{m})}{L^{m}}(1-(1-\eps)^{m-1}),
\end{align*}
where $C$ is independent of $L$, $R$ and $\eps$. Therefore,
\[
\lim_{L\to\infty}I_{1}\leq C(1-(1-\eps)^{m})R^{m}.
\]
If $\rho>0$ then a similar computation, working with $L(1+\rho)$ instead of $L$, 
shows the second claim of Proposition \ref{traceestimategeneral}.
\end{proof}

\subsection{Technical results}\label{technicalresultsgeneral}
In this section, we present a proof of Lemma 
\ref{controlvapsconcentraciosamplinggeneral} and 
\ref{controlvapsconcentraciointerpolaciogeneral}. First, we shall prove a 
localization type property of the functions $f_L$ of the space $E_L$.

\begin{lemma}\label{construcciocassampling}
Let $\Z$ be a $s$-separated family. Given $f_L\in E_L$ 
and $\eta>0$, there exists $t_{0}=t_{0}(\eta)$ such that for all $t\geq t_{0}$,
\[
	\frac{1}{k_L}\sum_{z_{Lj}\notin A^{+}_L(t)}|f_{L}(z_{Lj})|^2\leq 
	C_{1}\int_{M\setminus A_L}|f_L|^2+C_{2}\eta\int_{A_L}|f_L|^2,
\]
where $A^{+}_L=A^{+}_L(t)=B(\xi,(R+t)/L)$, $C_{1}$ and $C_{2}$ are constants depending only on the manifold $M$ and 
the separation constant $s$ of $\Z$.
\end{lemma}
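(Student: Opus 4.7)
The plan is to use a reproducing formula for $f_L$ against a well-localized kernel and then exploit the $s$-separation of $\Z$ to convert the resulting discrete sum of kernel values into a bounded integral operator. I fix some $\eps'\in(0,1)$ (say $\eps'=1/2$) and set $L'=L/(1-\eps')$. Since $\beta_{\eps'}(\lambda_i/L')=1$ whenever $\lambda_i\leq L$, the identity $B^{\eps'}_{L'}(f_L)=f_L$ holds for every $f_L\in E_L$, and hence
\[
f_L(z)=\int_M B^{\eps'}_{L'}(z,w)\,f_L(w)\,dV(w).
\]
By Lemma \ref{kernelestimates}, the kernel satisfies $|B^{\eps'}_{L'}(z,w)|\leq C_N L^m(1+Ld(z,w))^{-N}$ for any prescribed $N>m$, with constants independent of $L$.

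The key step is Cauchy-Schwarz with the kernel itself as weight. A polar-coordinates computation shows that $\int_M|B^{\eps'}_{L'}(z,w)|\,dV(w)$ is bounded uniformly in $z$ and $L$ (for $N>m$), so
\[
|f_L(z_{Lj})|^2\lesssim \int_M |B^{\eps'}_{L'}(z_{Lj},w)|\,|f_L(w)|^2\,dV(w).
\]
Splitting the integral along $A_L$ versus $M\setminus A_L$, summing over $z_{Lj}\notin A^+_L$ and applying Fubini, I arrive at
\[
\sum_{z_{Lj}\notin A^+_L}|f_L(z_{Lj})|^2 \lesssim \int_{A_L}|f_L|^2\,\Sigma(w)\,dV(w)+\int_{M\setminus A_L}|f_L|^2\,\Sigma(w)\,dV(w),
\]
where $\Sigma(w):=\sum_{z_{Lj}\notin A^+_L}|B^{\eps'}_{L'}(z_{Lj},w)|$.

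The remaining work is to estimate $\Sigma(w)$ in the two regimes. Using the separation of $\Z$, the balls $B(z_{Lj},s/(2L))$ are pairwise disjoint of volume $\simeq (s/L)^m$, and once $w$ lies at distance at least a fixed multiple of $s/L$ from $z_{Lj}$, the quantity $(1+Ld(\cdot,w))^{-N}$ is comparable on $B(z_{Lj},s/(2L))$ to its value at $z_{Lj}$. Bounding the sum by the corresponding integral then gives $\Sigma(w)\lesssim L^m/s^m$ for every $w$, which, after division by $k_L\simeq L^m$, yields the term $C_1\int_{M\setminus A_L}|f_L|^2$ with $C_1$ depending only on $M$ and $s$. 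For $w\in A_L$ every $z_{Lj}\notin A^+_L$ lies at distance $\geq t/L$ from $w$, so the same comparison combined with $\int_{|u|\geq t/2}(1+|u|)^{-N}\,du\simeq t^{m-N}$ upgrades the bound to $\Sigma(w)\lesssim L^m t^{m-N}/s^m$. Choosing $N>m$ once and for all and then $t_0=t_0(\eta)$ so that $C\,t^{m-N}/s^m\leq \eta$ for $t\geq t_0$ yields the lemma. The main technical nuisance is the comparison between the kernel at the lattice point $z_{Lj}$ and its values on the surrounding separation ball, which forces $t$ to exceed a constant multiple of $s$ and thereby dictates the dependence of $t_0$ on the separation constant.
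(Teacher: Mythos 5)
Your proof is correct, and the core ingredients are exactly those of the paper: the reproducing formula $f_L=B^{1/2}_{2L}(f_L)$, the kernel localization of Lemma \ref{kernelestimates}, and the use of the $s$-separation to convert the discrete sum $\sum_j(1+Ld(z_{Lj},w))^{-N}$ into a bounded integral, with the gap of size $t/L$ between $A_L$ and the complement of $A^+_L$ producing the $t^{m-N}$ smallness. The route, however, is genuinely different in one respect: the paper proves the inequality separately in $\ell^1$ and $\ell^\infty$ and then invokes Riesz--Thorin interpolation to reach $\ell^2$, whereas you bypass interpolation entirely by writing $|B|=|B|^{1/2}\cdot|B|^{1/2}$, applying Cauchy--Schwarz to get $|f_L(z_{Lj})|^2\lesssim\int_M|B_{2L}(z_{Lj},w)||f_L(w)|^2\,dV(w)$, and then running a Schur-type estimate on the column sums $\Sigma(w)$. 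Your variant is arguably cleaner because the paper's $L^\infty$ endpoint, as written, is only the trivial bound $\max_j|f_L(z_{Lj})|\le\|f_L\|_{L^\infty(M\setminus A_L)}$ which, when combined with the $L^1$ bound, does not mechanically interpolate to the two-term $L^2$ estimate; one must really bound the ``inner'' and ``outer'' pieces of the sampling operator separately in both endpoints, which your direct Cauchy--Schwarz argument handles at once. Both approaches buy the same result with the same constants; yours avoids having to state and justify the interpolation framework.
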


\begin{proof}
Let $f_L\in E_L$. Consider the kernel
\[
B_{2L}(z,w):=B^{1/2}_{2L}(z,w),
\]
where $B^{\eps}_{L}(z,w)$ is defined in \eqref{defnewkernel}. Note that the 
transform $B_{2L}|_{E_L}$ is the identity transform, by construction. Thus,
\begin{equation}\label{expressionfLintegral}
f_L(z)=B_{2L}(f_L)(z)=\int_{M}B_{2L}(z,w)f_{L}(w)dV(w),\quad \forall z\in M.
\end{equation}
By Lemma \ref{kernelestimates}, for any $N\geq m$, there exists a constant 
$C_{N}$ such that
\begin{equation}\label{B2Lkernelestimate}
|B_{2L}(z,w)|\leq C_{N}L^{m}\frac{1}{(1+2Ld(z,w))^{N}}.
\end{equation}
We will choose $N$ later on.\\
\\
In order to prove the claimed result, we will show that
\begin{enumerate}
	\item Given $\eta>0$ there exists $t_{0}=t_{0}(\eta)$ such that for all $t\geq t_0$,
	\begin{equation}\label{L1sampling}
		\frac{1}{k_L}\sum_{z_{Lj}\notin A^{+}_L}|f_{L}(z_{Lj})|\leq 
		C_{1}\int_{M\setminus A_L}|f_L|+C_{2}\eta\int_{A_L}|f_L|,
	\end{equation}
	where $C_{i}$ are uniform constants.
	\item Given $\eta>0$ there exists $t_{0}=t_{0}(\eta)$ such that for all $t\geq t_0$,
	\begin{equation}\label{Linfsampling}
		\max_{z_{Lj}\notin A^{+}_L}|f_{L}(z_{Lj})|\leq 
		C_{1}\|f_L\|_{L^{\infty}(M\setminus A_L)}+C_{2}\eta\|f_L\|_{L^{\infty}(A_L)},
	\end{equation}
	where $C_{i}$ are uniform constants.
\end{enumerate}

Hence, by interpolation we will have the claimed result for the $L^2$-norm. Let's prove 
first that this is true in the $L^{\infty}$-norm.\\
\\
Observe that the set of points $z_{Lj}\notin A^{+}_L$ is contained in 
$M\setminus B(\xi, (R+t)/L)$. Thus,
\[
\max_{z_{Lj}\notin A^{+}_L}|f_L(z_{Lj})|\leq 
\left\|f_L\right\|_{L^{\infty}(M\setminus B(\xi,(R+t)/L))}\leq
\|f_L\|_{L^{\infty}(M\setminus B(\xi, R/L))}.
\]
Hence, \eqref{Linfsampling} is trivially true.\\
\\
Now we just need to prove this for the $L^{1}$-norm. Let
\[
0\leq h_j(w):=\frac{1}{(1+2Ld(z_{Lj},w))^{N}}\leq 1.
\]
Using \eqref{expressionfLintegral} and \eqref{B2Lkernelestimate}, we obtain:
\begin{align*}
\frac{1}{k_L}\sum_{z_{Lj}\notin A^{+}_L}|f_{L}(z_{Lj})|
& \leq C_{N}\left\{\int_{M\setminus B(\xi,R/L)}+\int_{B(\xi,R/L)}\right\}
|f_L(w)|\sum_{z_{Lj}\notin A^{+}_L}h_{j}(w)\\
& =:I_{1}+I_{2}.
\end{align*}
Observe that for all $w\in M$,
\begin{align*}
h_{j}(w) \lesssim \frac{L^{m}}{s^{m}}\int_{B(z_{Lj},s/L)} \frac{dV(z)}{(1+2Ld(z,w))^{N}}.
\end{align*}
Note that $B(z_{Lj},s/L)$ are pairwise disjoint and for $w\in B(\xi,R/L)$,
\[
\bigcup_{z_{Lj}\notin A^{+}_L}B\left(z_{Lj},\frac{s}{L}\right)\subset 
M\setminus B\left(\xi,\frac{R+t-s}{L}\right)
\subset M\setminus B\left(w,\frac{t-s}{L}\right),
\]
Therefore, if $w\in B(\xi,R/L)$,
\begin{align*}
\sum_{z_{Lj}\notin A^{+}_L}h_{j}(w) \lesssim 
\frac{L^{m}}{s^{m}}\int_{M\setminus B\left(w,\frac{t-s}{L}\right)}
\frac{dV(z)}{(1+2Ld(z,w))^{N}}
\lesssim \frac{C_N}{s^{m}(t-s)^{N-m}}\leq \eta
\end{align*}
for all $t\geq t_{0}(\eta,N)$, provided $N>m$. This implies that
\[
I_{2}\leq C_{2}\eta\int_{B(\xi,R/L)}|f_L|.
\]
The only thing left is to bound the integral $I_{1}$. Given $w$, let
\[
\#J:=\#\left\{j: B(w,2s/L)\capÊB(z_{Lj},s/L)\neq \emptyset\right\}.
\]
Then there exists a uniform constant $C(s)$ (depending only on $s$) such that 
$\# J\leq C(s)$.  Hence,
\begin{align*}
\sum_{z_{Lj}\notin A^{+}_L}h_{j}(w)=\sum_{\overset{z_{Lj}\notin A^{+}_L}{j\in J}}
h_{j}(w)+\sum_{\overset{z_{Lj}\notin A^{+}_L}{j\notin J}}h_{j}(w)
\leq C(s)+\sum_{j\notin J}h_{j}(w).
\end{align*}
Note that for any $w\in M$,
\[
\cup_{j\notin J}B(z_{Lj},s/L)\subset M\setminus B(w,s/L).
\]
Hence,
\begin{align*}
\sum_{j\notin J}h_{j}(w)\lesssim 
\frac{L^m}{s^m}\int_{M\setminus B(w,s/L)}\frac{dV(z)}{(1+2Ld(z,w))^{N}}
\lesssim C_{s,N},
\end{align*}
provided $N>m$. So we have that
\[
I_{1}\leq (C(s)+C_{s,N})\int_{M\setminus B(\xi,R/L)}|f_L|dV
\]
and the claim is proved.
\end{proof}

\begin{lemma}\label{construcciocasinterpolacio}
Let $\Z$ be a $s$-separated family. Given $f_L\in E_L$ and $\eta>0$, 
there exists $t_{1}=t_{1}(\eta)$ such that for all $t\geq t_{1}$
\[
	\frac{1}{k_L}\sum_{z_{Lj}\in A^{-}_L(t)}|f_{L}(z_{Lj})|^2\leq 
	C_{1}\int_{A_L}|f_L|^2+C_{2}\eta\int_{M\setminus A_L}|f_L|^2,
\]
where $A^{-}_L=A^{-}_L(t)=B(\xi, (R-t)/L)$, $C_{1}$ and $C_{2}$ are constants depending 
only on the manifold $M$ and the separation constant  $s$ of $\Z$.
\end{lemma}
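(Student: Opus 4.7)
The plan is to mirror the proof of Lemma \ref{construcciocassampling} with the roles of $A_L$ and $M\setminus A_L$ exchanged throughout. As there, I will first establish the $L^1$ and $L^\infty$ analogues of the stated inequality and then interpolate to obtain the $L^2$ statement. The geometric input replacing the previous distance estimate is its dual: if $z_{Lj}\in A^-_L(t)=B(\xi,(R-t)/L)$ and $w\in M\setminus A_L$, then $d(z_{Lj},w)\geq t/L$ by the triangle inequality.

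The $L^\infty$ version is immediate since $A^-_L\subset A_L$, giving $\max_{z_{Lj}\in A^-_L}|f_L(z_{Lj})|\leq \|f_L\|_{L^\infty(A_L)}$ with no remainder term. For the $L^1$ version I would use the reproducing identity $f_L(z)=\int_M B_{2L}(z,w)f_L(w)\,dV(w)$ and the decay estimate $|B_{2L}(z,w)|\leq C_N L^m(1+2Ld(z,w))^{-N}$ from Lemma \ref{kernelestimates}, with $N>m$ to be fixed. Splitting $\int_M=\int_{A_L}+\int_{M\setminus A_L}$ and summing $|f_L(z_{Lj})|/k_L$ over $z_{Lj}\in A^-_L$ gives two contributions. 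The integral over $A_L$ yields the main term $C_1\int_{A_L}|f_L|$, via the separation-based uniform bound $\sum_{j}(1+2Ld(z_{Lj},w))^{-N}\lesssim C(s)$ valid for every $w\in M$, exactly as in the sampling case. The integral over $M\setminus A_L$ is where the new distance estimate enters: for such $w$, the disjoint balls $B(z_{Lj},s/L)$ with $z_{Lj}\in A^-_L$ all lie in $M\setminus B(w,(t-s)/L)$, so packing them and integrating yields
\[
\sum_{z_{Lj}\in A^-_L}\frac{1}{(1+2Ld(z_{Lj},w))^{N}}\lesssim \frac{C_N}{s^m(t-s)^{N-m}},
\]
which is at most $\eta$ once $t\geq t_1(\eta)$, producing the small remainder $C_2\eta\int_{M\setminus A_L}|f_L|$.

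The interpolation step is the only point requiring care, since the mixed-norm right-hand side is not of a form to which Riesz--Thorin applies directly to the evaluation map $f_L\mapsto (f_L(z_{Lj}))$. I would handle it by splitting the evaluation itself as
\[
f_L(z_{Lj})=\int_{A_L}B_{2L}(z_{Lj},w)f_L(w)\,dV(w)+\int_{M\setminus A_L}B_{2L}(z_{Lj},w)f_L(w)\,dV(w),
\]
viewing the two summands as linear operators $S_1,S_2$ from $L^p$ of the respective region into $\ell^p$ with normalized counting measure $1/k_L$. The $L^1$ and $L^\infty$ analyses above show that $\|S_1\|_{L^p\to \ell^p}$ is uniformly bounded and $\|S_2\|_{L^p\to \ell^p}\lesssim \eta$ for $p=1,\infty$. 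Applying Riesz--Thorin to each $S_i$ separately and combining through $|f_L(z_{Lj})|^2\leq 2|S_1f_L(z_{Lj})|^2+2|S_2f_L(z_{Lj})|^2$ then yields the stated $L^2$ inequality. I expect this interpolation bookkeeping to be the only genuinely new point; all remaining ingredients run in exact parallel with the sampling case.
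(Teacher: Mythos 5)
Your proposal is correct and mirrors the paper's proof of Lemma~\ref{construcciocassampling} with the roles of $A_L$ and $M\setminus A_L$ exchanged, which is exactly what the paper intends when it declares the two proofs ``similar.'' Your operator-splitting device for the passage from the $L^1$/$L^\infty$ bounds to the $L^2$ bound is a valid way to make the paper's terse ``by interpolation'' precise; an equivalent route is to apply Riesz--Thorin directly to the evaluation operator with respect to the weighted measure $d\mu = C_1\chi_{A_L}\,dV + C_2\eta\chi_{M\setminus A_L}\,dV$, whose $L^\infty$ norm coincides with the unweighted one.
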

The proof of Lemma \ref{construcciocasinterpolacio} is similar to 
the proof of Lemma \ref{construcciocassampling}.\\
\\
Now we prove Lemma \ref{controlvapsconcentraciosamplinggeneral}.
\begin{proof}[Proof of Lemma \ref{controlvapsconcentraciosamplinggeneral}]
Given $F_{L}\in E_L$, assume that
\[
F_L(z_{Lj})=0,\quad \forall z_{Lj}\in A^{+}_{L}=B(\xi,(R+t)/L).
\]
Then, using the fact that $\Z$ is $L^2$-\textup{M-Z} and Lemma \ref{construcciocassampling}, 
we have
\begin{align*}
\|F_L\|^2_{2}& \lesssim \frac{1}{k_L}\sum^{m_L}_{j=1}|F_L(z_{Lj})|^2=
\frac{1}{k_L}\sum_{z_{Lj}\notin A^{+}_{L}}|F_{L}(z_{Lj})|^2\\
& \leq C_{1}\int_{M\setminus A_L}|F_L|^2+C_{2}\eta\int_{A_{L}}|F_L|^2
\leq C_{1}\int_{M\setminus A_L}|F_L|^2+C_{2}\eta\|F_L\|^2_{2}.
\end{align*}
Picking $\eta>0$ small enough (note that it is independent of $\eps$, $L$ and $R$), we 
get a $t_0(\eta)$ given by Lemma \ref{construcciocassampling} so that for all $t\geq t_{0}$,
\begin{equation}\label{conclusion1}
\|F_L\|^2_{2}\leq C_3 \int_{M\setminus A_L}|F_L|^2dV,
\end{equation}
where $F_L\in E_L$ is any function vanishing at the points $z_{Lj}$ that are contained 
in $A^{+}_L$. Observe that $C_{3}>1$.\\
\\
Now, we consider an orthonormal basis of eigenvectors $G^{L}_{j}$ corresponding 
to the eigenvalues $\lambda^{L}_{j}$ of the modified concentration operator. Let
\[
f_L(z)=\sum^{N_{L}+1}_{j=1}c^{L}_{j}G^{L}_{j}\in E_L.
\]
Note that $f_L\in E_L$ since $N_L\leq CR^{m}\leq k_L$ for $L$ big enough, in view of 
the separation of $\Z$. Consider now $F_L:=B^{\eps}_L(f_L)\in E_L$. We will 
apply inequality \eqref{conclusion1} to $F_L$. We pick $c^{L}_{j}$ such that 
$F_{L}(z_{Lj})=0$ for all $z_{Lj}\in A^{+}_{L}$. Observe that
\begin{align*}
\sum^{N_L+1}_{j=1}\lambda^{L}_{j}|c^{L}_{j}|^2=\langle T^{\eps}_{L,A_L}f_L,f_L\rangle
=Ê\int_{A_L}|B^{\eps}_{L}f_L(w)|^2dV(w).
\end{align*}
Now, using inequality \eqref{conclusion1},
\begin{align*}
& \lambda^{L}_{N_{L}+1} \sum^{N_L+1}_{j=1}|c^{L}_{j}|^2\leq 
\sum^{N_L+1}_{j=1}\lambda^{L}_{j}|c^{L}_{j}|^2
=\left\{\int_{M}-\int_{M\setminus A_L}\right\}|B^{\eps}_Lf_L(z)|^2dV\\
& \leq \left(1-\frac{1}{C_3}\right)\left\|B^{\eps}_L(f_L)\right\|^2_{2}
\leq \left(1-\frac{1}{C_3}\right)\|f_L\|^2_{2}
=\left(1-\frac{1}{C_3}\right)\sum^{N_L+1}_{j=1}|c^{L}_{j}|^2.
\end{align*}
where the constant $C_3$ comes from \eqref{conclusion1} (independent of 
$\eps$, $L$ and $R$). Hence,
\[
\lambda^{L}_{N_L+1}\leq 1-\frac{1}{C_3}=:\gamma<1.
\]
\end{proof}

Now we are going to prove the technical lemma corresponding to the 
interpolating case.

\begin{proof}[Proof of Lemma \ref{controlvapsconcentraciointerpolaciogeneral}]
Let $\I=\left\{j;\quad z_{Lj}\in A^{-}_L\right\}$ and $\rho>0$ fixed.\\
Recall that, by Lemma \ref{propinterpfunc}, if $\Z$ is an interpolating sequence, 
then for each sequence $\left\{c_{Lj}\right\}_{Lj}$ such that
\[
\sup_{L}\frac{1}{k_L}\sum^{m_L}_{j=1}|c_{Lj}|^2<\infty,
\]
we can construct functions $f_L\in e(L)$ with $\sup_L\|f_L\|_{2}<\infty$ and 
$f_L(z_{Lj})=c_{Lj}$, where
\[
e(L):=\left\{f_L\in E_L;\quad \left\|f_L\right\|^2_{2}\lesssim 
\frac{1}{k_L}\sum^{m_L}_{j=1}|f_L(z_{Lj})|^2\right\}.
\]
In fact, these functions $f_L$ are the solution of the interpolation problem with 
minimal norm.\\
Since we have an interpolating family, we can construct for each 
$z_{Lj}\in \Z(L)$ a function $f_{j}\in e(L)$ such that
\[
f_{j}(z_{Lj'})=\delta_{jj'}.
\]
Clearly these functions $f_{j}$ are linearly independent. Since 
$B^{\eps}_{L(1+\rho)}|_{E_L}$ is bijective, for each $j$ there exists 
a function $h_j\in E_L$ such that
\[
f_{j}=B^{\eps}_{L(1+\rho)}h_j.
\]
Let
\[
F:=\text{span}\left\{h_j;\quad z_{Lj}\in A^{-}_L\right\}.
\]
Note that $F$ has dimension $n_L$. Let $f_L\in F$ an arbitrary function 
and $g_L:=B^{\eps}_{L(1+\rho)}f_L$. Since $f_L\in F$, we know that
\[
f_L=\sum_{j\in \I}c_jh_j.
\]
Hence,
\[
g_L=B^{\eps}_{L(1+\rho)}f_L=\sum_{j\in\I}c_jB^{\eps}_{L(1+\rho)}h_j
=\sum_{j\in \I}c_jf_j\in e(L),
\]
where we have used that each $f_j\in e(L)$ and so this $g_L$ is the function 
of minimal norm that solves the interpolation problem with data $c_j\delta_{jj'}$. 
Therefore,
\[
\|g_L\|^2_2\lesssim \frac{1}{k_L}\sum^{m_L}_{j=1}|g_L(z_{Lj})|^2,
\]
where the constant do not depend on $\eps$ and $L$.\\
\\
Note that, by construction, $f_j$ vanishes in the points $z_{Lj'}$ with $j\neq j'$. 
Therefore, for each $j\in\I$ fixed, we have that $f_{j}(z_{Lk})=0$ for all $k\notinÊ\I$. 
Thus,
\[
g_L(z_{Lk})=\sum_{j\in \I}c_{j}f_{j}(z_{Lk})=0,\quad \forall k\notin\I.
\]
This shows that $g_L=0$ for $z_{Lk}\notin A^{-}_L$. Hence, applying  
Lemma \ref{construcciocasinterpolacio} to $g_L=B^{\eps}_{L(1+\rho)}f_L$, we get
\begin{align*}
& \|B^{\eps}_{L(1+\rho)}f_L\|^2_{2}\lesssim \frac{1}{k_L}\sum^{m_L}_{j=1}|g_L(z_{Lj})|^2
=\frac{1}{k_L}\sum_{j\in \I}|g_L(z_{Lj})|^2\\
& \leq C_{1}\int_{A_L}|B^{\eps}_{L(1+\rho)}f_L|^2dV
+C_{2}\eta\int_{M\setminus A_L}|B^{\eps}_{L(1+\rho)}f_L|^2dV\\
& \leq C_{1}\int_{A_L}|B^{\eps}_{L(1+\rho)}f_L|^2dV+C_{2}\eta\|B^{\eps}_{L(1+\rho)}f_L\|^2_2.
\end{align*}
Picking $\eta$ small enough (note that it is independent of $\rho$, $\eps$, $L$ and $R$ 
because all the constants appearing in the above computation are independent of these 
parameters), we get from Lemma \ref{construcciocasinterpolacio} a value $t_1=t_{1}(\eta)$ 
such that for all $t\geq t_1$,
\begin{equation}\label{ineqgL}
	\|B^{\eps}_{L(1+\rho)}f_L\|^2_{2}\leq C_{1}\int_{A_L}|B^{\eps}_{L(1+\rho)}f_L|^2dV.
\end{equation}
Thus, using this last estimate \eqref{ineqgL}, we get the following.
\begin{align*}
& \beta_{\eps}^{2}\left(\frac{1}{1+\rho}\right)\|f_L\|^2_{2}\leq
\sum^{k_L}_{i=1}\beta_{\eps}^2\left(\frac{\lambda_i}{L(1+\rho)}\right)|\langle f_L,\phi_{i}\rangle|^2\\
&=\sum^{k_{L(1+\rho)}}_{i=1}\beta_{\eps}^2\left(\frac{\lambda_i}{L(1+\rho)}\right)
|\langle f_L,\phi_{i}\rangle|^2= \|B^{\eps}_{L(1+\rho)}f_L\|^2_{2}\\
& \leq C_{1}\int_{A_L}|B^{\eps}_{L(1+\rho)}f_L|^2dV
=C_{1}\langle T^{\eps}_{L(1+\rho),A_L}f_L,f_L\rangle.
\end{align*}
We have proved that for all $f_L\in F$,
\begin{equation}\label{estimatespaceFgeneral}
\frac{\langle T^{\eps}_{L(1+\rho),A_L}f_L,f_L\rangle}{\langle f_L,f_L\rangle}\geq 
\delta:=C\beta_{\eps}^2\left(\frac{1}{1+\rho}\right),
\end{equation}
where $C$ does not depend on $L$, $\rho$, $\eps$ and $f_L$. Now, applying Weyl-Courant 
Lemma (see \cite[Part 2, p. 908]{DunSch}), we know
\[
\lambda^{L(1+\rho)}_{k-1}\geq 
\inf_{g\in E_{L(1+\rho)}\cap E}
\frac{\langle T^{\eps}_{L(1+\rho),A_L}g,g\rangle}{\langle g,g\rangle}
\]
for each subspace $E\subset E_{L(1+\rho)}$ with dim$(E)=k$. Take 
$E:=F\subset E_L\subset E_{L(1+\rho)}$ 
defined previously. Note that dim$(E)=$dim$(F)=n_L$ and hence, 
using \eqref{estimatespaceFgeneral}
\[
\lambda^{L(1+\rho)}_{n_L-1}\geq
\inf_{f_L\in F}\frac{\langle T^{\eps}_{L(1+\rho),A_L}f_L,f_L\rangle}{\langle f_L,f_L\rangle}
\geq \delta.
\]
Note that $0<\delta=C\beta_{\eps}^2(1/(1+\rho))<1$.
\end{proof}

\section{Fekete families}
Throughout this section only, we assume that $M$ is an admissible manifold (at 
the end of this section we provide some examples of such manifolds). The 
precise definition of admissibility is the following.
\begin{defini}\label{defadmissible}
We say that a manifold is \textbf{admissible} if it satisfies the following 
\textit{product property}: there exists a constant $C>0$ such that for all $0<\eps<1$ and $L\geq 1$:
\begin{equation}\label{productproperty}
E_{L}\cdot E_{\eps L}\subset E_{L(1+C\eps)}.
\end{equation}
\end{defini}
Thus, we are assuming that we may multiply two functions of our spaces and still 
obtain a function which is in some bigger space $E_L$.
\begin{defini}
Let $\left\{\phi^{L}_{1},\ldots,\phi^{L}_{k_L}\right\}$ be any basis in $E_L$. The 
points $\Z(L)=\left\{z_{L1},\ldots,z_{Lk_L}\right\}$ maximizing the determinant
\[
|\Delta(x_1,\ldots,x_{k_L})|=|\text{det}(\phi^L_{i}(x_j))_{i,j}|
\]
are called a set of \textbf{Fekete points} of \textit{degree} $L$ for $M$.
\end{defini}
A natural problem is to find the limiting distribution of points as $L\to\infty$. In 
\cite{MarOrtFekete}, J. Marzo and J. Ortega-Cerd\`a proved that as $L\to\infty$, the 
number of Fekete points of degree $L$ for $\mathbb S^m$ in a spherical cap $B(z,R)$ 
gets closer to $k_L\tilde{\sigma}(B(z,R))$, where $\tilde{\sigma}$ is the normalized 
Lebesgue measure on $\mathbb S^{m}$. They emphasize the connection of the 
Fekete points with the M-Z and interpolating arrays. In \cite{BB}, Berman and Boucksom 
have found the limiting distribution in the context of line bundles over complex manifolds. 
The proof is based on a careful study of the weighted transfinite diameter and its 
differentiability.\\
Following the approach in \cite{MarOrtFekete}, we study the distribution of a set of 
Fekete points associated to the spaces $E_L$ as $L\to\infty$. The main difficulty in 
relating the Fekete points with the M-Z and interpolating families is to construct a 
weighted interpolation formula for $E_L$ where the weight has a fast decay off the 
diagonal. That is the reason why, we restrict our attention to manifolds that satisfy 
the pro\-duct property \eqref{productproperty}. Under this hypothesis, we are able to 
prove the equidistribution of the Fekete points.

\subsection{Relation with interpolating and M-Z arrays}
The following two results give the relation of the Fekete points with the interpolating and 
M-Z arrays. Intuitively, Fekete families are almost interpolating and M-Z.

\begin{thm}\label{FeketeMZ}
Given $\eps>0$, let $L_{\eps}=[(1+\eps)L]$ and
\[
\Z_{\eps}(L)=\Z(L_\eps)=[z_{L_{\eps}1},\ldots,z_{L_{\eps}k_{L_{\eps}}}],
\]
where $\Z(L)$ is a set of Fekete points of degree $L$. Then 
$\Z_{\eps}=\left\{\Z_{\eps}(L)\right\}_{L}$ is a M-Z array.
\end{thm}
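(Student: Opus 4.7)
Because $\Z_\eps(L)=\Z(L_\eps)$ is, by the very definition of Fekete points, a basis (unisolvent set) for the slightly larger space $E_{L_\eps}$, every $f_L\in E_L\subset E_{L_\eps}$ admits the exact Lagrange expansion $f_L=\sum_j f_L(z_{L_\eps j})\,\ell_j$, where $\ell_j\in E_{L_\eps}$ are the Lagrange functions determined by $\ell_j(z_{L_\eps k})=\delta_{jk}$. The plan is to reduce both M-Z inequalities at bandwidth $L$ to quantitative control of this Lagrange system.

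\textbf{Separation.} The Fekete extremality forces $\|\ell_j\|_\infty\le 1$: if $|\ell_j(w)|>1$ at some $w$, replacing $z_{L_\eps j}$ by $w$ would multiply the Vandermonde $|\Delta|$ by $|\ell_j(w)|>1$, contradicting maximality. The Bernstein inequality \eqref{BernsteinIneqforEL} applied in $E_{L_\eps}$ then gives $\|\nabla\ell_j\|_\infty\lesssim L_\eps$, and for $k\neq j$,
\[
1=|\ell_j(z_{L_\eps j})-\ell_j(z_{L_\eps k})|\lesssim L_\eps\, d_M(z_{L_\eps j},z_{L_\eps k}),
\]
so the Fekete array is uniformly separated at the natural scale $1/L$.

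\textbf{Upper (Bessel) M-Z bound.} Separation combined with Theorem~\ref{unionsepfam} (Plancherel-P\'olya), applied to $f_L\in E_L\subset E_{L_\eps}$, immediately yields
\[
\frac{1}{k_L}\sum_{j=1}^{k_{L_\eps}}|f_L(z_{L_\eps j})|^2\lesssim\|f_L\|_2^2,
\]
the bounded factor $k_{L_\eps}/k_L\to (1+\eps)^m$ being harmless.

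\textbf{Lower (sampling) M-Z bound and main obstacle.} Pairing the Lagrange identity with $f_L\in E_L$ in $L^2$ gives
\[
\|f_L\|_2^2=\sum_j f_L(z_{L_\eps j})\,\overline{\langle f_L,\ell_j\rangle},
\]
and by Cauchy-Schwarz it is enough to prove the Bessel-type estimate $\sum_j|\langle f_L,\ell_j\rangle|^2\lesssim k_L^{-1}\|f_L\|_2^2$. This is the heart of the matter and the only step that genuinely uses the admissibility hypothesis \eqref{productproperty}. My plan is to build, for each Fekete point $z_{L_\eps j}$, an explicit comparison function $\Lambda_j=\phi_j\,\psi_j\in E_{L_\eps}$ in which $\phi_j\in E_L$ is a normalized reproducing-kernel-type bump at $z_{L_\eps j}$ and $\psi_j\in E_{\eps'L}$ (with $\eps'=\eps/C$) is a smooth spectral cutoff of the type $B^{\eps''}_{\eps'L}(\cdot,z_{L_\eps j})$. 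The product property \eqref{productproperty} places $\Lambda_j$ in $E_{L_\eps}$, while Lemma~\ref{kernelestimates} delivers $\Lambda_j(z_{L_\eps j})\simeq 1$, the sharp decay $|\Lambda_j(z)|\lesssim (1+L\,d_M(z,z_{L_\eps j}))^{-N}$, and hence $\|\Lambda_j\|_2^2\lesssim 1/k_L$. Expanding $\Lambda_j=\sum_k\Lambda_j(z_{L_\eps k})\ell_k$ by Lagrange interpolation and exploiting the uniform separation of the previous step together with the a priori bound $\|\ell_k\|_\infty\le 1$, a Schur-test / bootstrap argument transfers the pointwise decay of $\Lambda_j$ to analogous $L^2$ and off-diagonal estimates for the Lagrange system,
\[
\|\ell_j\|_2^2\lesssim \frac{1}{k_L},\qquad |\langle\ell_j,\ell_k\rangle|\lesssim \frac{1}{k_L\,(1+L\,d_M(z_{L_\eps j},z_{L_\eps k}))^N}.
\]
A final Schur test on the Gram matrix $(\langle\ell_j,\ell_k\rangle)_{jk}$, which is summable along rows thanks to separation and the above decay, then gives the required Bessel bound $\sum_j|\langle f_L,\ell_j\rangle|^2\lesssim k_L^{-1}\|f_L\|_2^2$ and closes the argument. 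The genuine obstacle is precisely the transfer of decay from the explicitly constructed $\Lambda_j$ to the implicitly defined $\ell_j$; without admissibility one cannot even form the product $\Lambda_j=\phi_j\psi_j$ inside $E_{L_\eps}$.
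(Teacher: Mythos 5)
Your separation argument and the Bessel (upper) bound match the paper exactly. Where you diverge — and where there is a genuine gap — is the lower bound.

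The paper does \emph{not} attempt to estimate the $\ell_j$ in $L^2$ or to control the Gram matrix $(\langle\ell_j,\ell_k\rangle)$. Instead, for a fixed $z$ it multiplies $f_L$ by a decaying weight $p(z,\cdot)\in E_{L\eps/C}$ with $p(z,z)=1$ and applies exact Lagrange interpolation in $E_{L_\eps}$ to the \emph{product} $R(w)=f_L(w)p(z,w)$, yielding $f_L(z)=\sum_j p(z,z_{L_\eps j})f_L(z_{L_\eps j})\ell_j(z)$. The operator $Q_L[v]=\sum_j v_j\,p(\cdot,z_{L_\eps j})\ell_j$ is then estimated on $L^1$ and $L^\infty$ and interpolated. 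The crucial point is that the only input from the Lagrange system is the trivial bound $\|\ell_j\|_\infty\le 1$; all the localization is carried by the explicit weight $p$, for which $\|p(\cdot,\xi)\|_1\lesssim 1/k_L$ and $\sum_j|p(z,z_{L_\eps j})|\lesssim 1$ follow from Lemma~\ref{kernelestimates} and separation.

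Your plan instead asks for $\|\ell_j\|_2^2\lesssim 1/k_L$ and off-diagonal decay of $\langle\ell_j,\ell_k\rangle$, obtained by expanding a comparison function $\Lambda_j=\sum_k\Lambda_j(z_{L_\eps k})\ell_k$ and ``bootstrapping''. This step does not close as sketched. From $\|\ell_k\|_\infty\le 1$ alone you can only conclude that $\sum_{k\neq j}\Lambda_j(z_{L_\eps k})\ell_k$ has $L^\infty$-norm at most $\sum_{k\neq j}|\Lambda_j(z_{L_\eps k})|$, which is $O_{\eps,N}(1)$ but not demonstrably $<1$: the bound depends on the Fekete separation constant, which is some fixed $s=s(M)>0$ with no lower bound at your disposal, and on the bandwidth $\eps'L=\eps L/C$ of $\Lambda_j$, which further weakens the decay factor to $(1+\eps' s)^{-N}$. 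Without smallness you cannot invert the matrix $(\Lambda_j(z_{L_\eps k}))_{jk}$ by a Neumann series, so the claimed $L^2$ bound and Gram decay for $\ell_j$ are unsupported. In fact $\|\ell_j\|_2^2\lesssim k_L^{-1}$ is essentially the statement that the Fekete array is a Riesz/sampling system at bandwidth $L_\eps$, i.e.\ close to what the theorem is meant to establish, so this route risks circularity. The paper's weighted interpolation formula is precisely the device that avoids ever having to prove $L^2$-localization of the Lagrange functions: it only uses $\|\ell_j\|_\infty\le 1$, and the extra slack between $L$ and $L_\eps$ is spent on the weight $p$, not on controlling $\ell_j$.
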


\begin{proof}
Assume that $\Z$ is a Fekete family. We will prove that they are uniformly separated. 
Consider the Lagrange \textit{polynomial} defined as
\[
l_{Li}(z):=\frac{\Delta(z_{L1},\ldots,z_{L(i-1)},z,z_{L(i+1)},\ldots,z_{Lk_L})}
{\Delta(z_{L1},\ldots,z_{Lk_L})}.
\]
Note that
\begin{itemize}
	\item $\|l_{Li}\|_{\infty}=1$.
	\item $l_{Li}(z_{Lj})=\delta_{ij}$.
	\item $l_{Li}\in E_L$.
\end{itemize}
Thus, using the Bernstein inequality for the space $E_L$ (see \eqref{BernsteinIneqforEL}), 
we have for all $j\neq i$,
\begin{align*}
1& =|l_{Li}(z_{Li})-l_{Li}(z_{Lj})|\leq \|\nabla l_{Li}\|_{\infty}d_M(z_{Li},z_{Lj})\\
& \lesssim L\|l_{Li}\|_{\infty}d_M(z_{Li},z_{Lj})=Ld_M(z_{Li},z_{Lj}).
\end{align*}
Therefore,
\[
d_M(z_{Li},z_{Lj})\geq \frac{C}{L},
\]
i.e. $\Z$ is uniformly separated. This implies that $\Z_{\eps}$ is also uniformly separated 
because
\[
d_M(z_{L_{\eps}i},z_{L_{\eps}j})\geq \frac{C}{L_{\eps}}\overset{L_{\eps}\leq (1+\eps)L}{\geq}
\frac{C/(1+\eps)}{L}.
\]
Using Theorem \ref{unionsepfam} we get for any $f_L\in E_L$,
\[
\frac{1}{k_L}\sum^{k_{L_{\eps}}}_{j=1}|f_L(z_{L_{\eps}j})|^2\lesssim \int_{M}|f_L|^2dV.
\]
In order to prove that $\Z_{\eps}$ is M-Z, we only need to prove the converse inequality, i.e.
\[
\frac{1}{k_L}\sum^{k_{L_{\eps}}}_{j=1}|f_L(z_{L_{\eps}j})|^2\gtrsim \|f_L\|^2_2.
\]
Consider the Lagrange interpolation operator defined in $\mathcal C(M)$ as
\[
\Lambda_{L}(f)(z):=\sum^{k_L}_{j=1}f(z_{Lj})l_{Lj}(z).
\]
Note that
\[
\|\Lambda_L(f)\|_{\infty}\leq k_{L}\|f\|_{\infty}.
\]
This estimate isn't enough. In order to have better control on the norms, we will make use of 
a weighted interpolation formula. Fix a point $z\in M$ and let $p(z,\cdot)$ be a function in the space 
$E_{\frac{\eps}{C}L}$ such that $p(z,z)=1$, where $C$ is the constant appearing in 
\eqref{productproperty}. Then given $f_L\in E_L$ one has
\[
R(w)=f_L(w)p(z,w)\in E_{L_{\eps}}.
\]
Note that $R(z)=f_L(z)p(z,z)=f_L(z)$. Thus, we have a weigthed representation formula
\[
f_L(z)=\sum^{k_{L_{\eps}}}_{j=1}p(z,z_{L_{\eps}j})f_L(z_{L_{\eps}j})l_{L_{\eps}j}(z).
\]
We define the operator $Q_L$ from $\mathbb C^{k_{L_{\eps}}}\to E_{L_{2\eps}}$ as
\[
Q_L[v](z)=\sum^{k_{L_{\eps}}}_{j=1}v_jp(z,z_{L_{\eps}j})l_{L_{\eps}j}(z),
\quad \forall v\in \mathbb C^{k_{L_{\eps}}}.
\]
We want to prove that
\begin{equation}\label{ineqFeketeMZ}
\int_{M}|Q_L[v](z)|^2dV(z)\lesssim \frac{1}{k_{L}}\sum^{k_{L_{\eps}}}_{j=1}|v_j|^2,
\end{equation}
with constant independent of $L$. Once we have proved this estimate, choosing 
$v_j=f_L(z_{L_{\eps}j})$ we will have
\begin{align*}
Q_L[(f_L(z_{L_{\eps}j}))_{j}](z)& 
=\sum^{k_{L_{\eps}}}_{j=1}f_L(z_{L_{\eps}j})p(z,z_{L_{\eps}j})l_{L_{\eps}j}(z)\\
& =\sum^{k_{L_{\eps}}}_{j=1}R(z_{L_{\eps}j})l_{L_{\eps}j}(z)=R(z)=f_L(z).
\end{align*}
Hence, applying the claimed inequality \eqref{ineqFeketeMZ} we will obtain
\[
\|f_L\|^2_2\lesssim \frac{1}{k_{L}}\sum^{k_{L_{\eps}}}_{j=1}|f_L(z_{L_{\eps}j})|^2,
\]
and thus $\Z_{\eps}$ is M-Z.\\
In order to prove \eqref{ineqFeketeMZ}, we need to choose the weight $p$ with care. 
We shall construct $p\in E_{L\eps/C}$ with a fast decay off the diagonal.\\
\\
Let $\delta>0$ and consider the kernels $B_L(z,w):=B^{\delta}_{L}(z,w)$ defined in 
Section \ref{estimatesReprKernel}. Let
\[
p(z,w)=\frac{B_{L\frac{\eps}{C}}(z,w)}{B_{L\frac{\eps}{C}}(z,z)}
\in E_{L\frac{\eps}{C}}.
\]
Observe that
\begin{itemize}
	\item $p(z,z)=1$.
	\item 
	\begin{align*}
	\int_{M}|p(z,w)|dV(w)& =\frac{1}{B_{L\frac{\eps}{C}}(z,z)}
	\|B_{L\frac{\eps}{C}}(z,\cdot)\|_1\\
	& \lesssim \frac{1}{k_L},
	\end{align*}
	where we have used $\|B_{L}(z,\cdot)\|_1\lesssim 1$ (see 
	\cite[Equation (2.11), Theorem 2.1]{FM-Bernstein} for a proof).
\end{itemize}
Now we are ready to prove \eqref{ineqFeketeMZ}. Note that
\begin{align*}
\int_{M}|Q_L[v](z)|dV(z)& \leq \int_{M}\sum^{k_{L_{\eps}}}_{j=1}|v_{j}||p(z,z_{L_{\eps}j})||l_{L_{\eps}j}(z)|dV(z)\\
& \leq \sum^{k_{L_{\eps}}}_{j=1}|v_{Lj}|\|p(\cdot,z_{L_{\eps}j})\|_1dV(z)
\lesssim \frac{1}{k_L}\sum^{k_{L_{\eps}}}_{j=1}|v_{Lj}|.
\end{align*}
On the other hand,
\[
|Q_L[v](z)|\leq\sup_{j}|v_j|\sum^{k_{L_{\eps}}}_{j=1}|p(z,z_{L_{\eps}j})|.
\]
Let $s$ be the separation constant of $Z_{L_{\eps}}$ and
\[
h(z,w)=\frac{1}{(1+L_{\eps}d_M(z,w))^N}\leq 1.
\]
Note that,
\[
\inf_{w\in B(z_{L_{\eps}j},s/L_{\eps})}h(z,w)\geq C_{s}h(z,z_{L_{\eps}j}).
\]
Therefore,
\begin{align*}
\sum^{k_{L_{\eps}}}_{j=1}|p(z,z_{L_{\eps}j})|&
=\frac{1}{B_{L\frac{\eps}{C}}(z_{L_{\eps}j},z_{L_{\eps}j})}
\sum^{k_{L_{\eps}}}_{j=1}|B_{L\frac{\eps}{C}}(z_{L_{\eps}j},z)|\\
& \lesssim \sum^{k_{L_{\eps}}}_{j=1}
\frac{1}{(1+L\frac{\eps}{C}d_M(z,z_{L_{\eps}j}))^N}\\
& \lesssim \frac{L^{m}_{\eps}}{s^m}
\int_{\cup^{k_{L_{\eps}}}_{j=1}B(z_{L_{\eps}j},s/L_{\eps})}h(z,w)dV(w)\\
&=\frac{L^{m}_{\eps}}{s^m}
\int_{\cup^{k_{L_{\eps}}}_{j=1}B(z_{L_{\eps}j},s/L_{\eps})\cap B(z,2s/L_{\eps})}
h(z,w)dV(w)\\
& +\frac{L^{m}_{\eps}}{s^m}
\int_{\cup^{k_{L_{\eps}}}_{j=1}B(z_{L_{\eps}j},s/L_{\eps})\cap B(z,2s/L_{\eps})^c}
h(z,w)dV(w)\\
& \leq C_{s,\eps}+C_{s}L^{m}_{\eps}\int_{M\setminus B(z,2s/L_{\eps})}h(z,w)dV(w)
\lesssim 1,
\end{align*}
where we have used that
\[
\int_{M\setminus B(z,r/L_{\eps})}h(z,w)dV(w)\lesssim \frac{1}{L^{m}_{\eps}(1+r)^{N-m}}.
\]
This computation follows by integrating $h(z,w)$ using the distribution function.\\
Hence, we have proved that
\[
\|Q_L[v]\|_{\infty}\lesssim \sup_j|v_j|.
\]
The claimed estimate \eqref{ineqFeketeMZ} follows by the Riesz-Thorin interpolation theorem.
\end{proof}
\noindent The following result relates the Fekete points with the interpolating families.
\begin{thm}\label{Feketeinterp}
Given $\eps>0$, let $L_{-\eps}=[(1-\eps)L]$ and let
\[
\Z_{-\eps}(L)=\Z(L_{-\eps})=\left\{z_{L_{-\eps}1},\ldots,z_{L_{-\eps}k_{L_{-\eps}}}\right\},
\]
where $\Z(L)$ is a set of Fekete points of \textit{degree} $L$. Then the array 
$\Z_{-\eps}=\left\{\Z_{-\eps}(L)\right\}_{L}$ is an interpolating family.
\end{thm}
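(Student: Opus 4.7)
The plan is to mirror the weighted-Lagrange construction from the proof of Theorem \ref{FeketeMZ}. Given admissible data $c=\{c_j\}_{j=1}^{k_{L_{-\eps}}}$ with $\frac{1}{k_L}\sum_j |c_j|^2 < \infty$, I will produce an interpolant $f_L \in E_L$ explicitly, then bound $\|f_L\|_2$ by Riesz--Thorin interpolation between an $\ell^\infty\to L^\infty$ and an $\ell^1\to L^1$ estimate.

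Let $l_{L_{-\eps}j}$ denote the Lagrange polynomials associated with $\Z(L_{-\eps})$: they satisfy $l_{L_{-\eps}j}\in E_{L_{-\eps}}$, $l_{L_{-\eps}j}(z_{L_{-\eps}k}) = \delta_{jk}$, and $\|l_{L_{-\eps}j}\|_\infty=1$. As in the M-Z direction, Bernstein's inequality \eqref{BernsteinIneqforEL} applied to these polynomials forces $\Z_{-\eps}$ to be uniformly separated with constant $\simeq 1/L$. Now pick $\delta = \delta(\eps)>0$ small enough (a definite fraction of $\eps$) so that the product property \eqref{productproperty} yields $E_{L_{-\eps}} \cdot E_{\delta L_{-\eps}} \subset E_{L_{-\eps}(1+C\delta)} \subset E_L$, and build the localizer
\[
p(z,w) = \frac{B_{\delta L}(z,w)}{B_{\delta L}(w,w)},
\]
with $B_{\delta L} = B^{1/2}_{\delta L}$ as in Section \ref{estimatesReprKernel}. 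Then $p(\cdot,w)\in E_{\delta L}$, $p(w,w)=1$, and by Lemma \ref{kernelestimates} combined with $B_{\delta L}(w,w)\simeq (\delta L)^m$ one has $|p(z,w)|\lesssim (1+\delta L\,d(z,w))^{-N}$ for any $N\ge m$ and $\int_M |p(z,w)|\,dV(z)\lesssim 1/k_L$, with constants independent of $L$.

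Define the candidate interpolant
\[
f_L(z) = \sum_{j=1}^{k_{L_{-\eps}}} c_j\, p(z,z_{L_{-\eps}j})\, l_{L_{-\eps}j}(z).
\]
The product property places $f_L$ in $E_L$, and $f_L(z_{L_{-\eps}k})=c_k$ follows from $p(w,w)=1$ together with $l_{L_{-\eps}j}(z_{L_{-\eps}k})=\delta_{jk}$. Viewing $c\mapsto f_L$ as a linear operator, the bound $\|l_{L_{-\eps}j}\|_\infty=1$ gives
\[
\|f_L\|_\infty \le \sup_j |c_j|\,\sup_{z\in M}\sum_{j=1}^{k_{L_{-\eps}}} |p(z,z_{L_{-\eps}j})| \lesssim \sup_j |c_j|,
\]
where the second step uses the separation of $\Z_{-\eps}$ and the fast decay of $p$, exactly as in the bound $\sum_j |p(z,z_{L_{\eps}j})|\lesssim 1$ from the proof of Theorem \ref{FeketeMZ}. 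For $L^1$, moving absolute values inside and again using $\|l_{L_{-\eps}j}\|_\infty=1$,
\[
\int_M |f_L|\,dV \le \sum_j |c_j|\int_M |p(z,z_{L_{-\eps}j})|\,dV(z)\lesssim \frac{1}{k_L}\sum_j |c_j|.
\]
Riesz--Thorin interpolation between these endpoint bounds delivers $\|f_L\|_2^2 \lesssim \frac{1}{k_L}\sum_j|c_j|^2$, which is the stability estimate of Lemma \ref{propinterpfunc} and proves that $\Z_{-\eps}$ is $L^2$-interpolating.

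The main technical obstacle lies in the product-property bookkeeping: one must fix $\delta$ as a definite fraction of $\eps$ and verify that the $\delta$-dependent factors appearing in $B_{\delta L}(w,w)\simeq (\delta L)^m$ -- and hence in the $L^\infty$-sum and in the $L^1$-integral of $p$ -- all collapse into constants that are independent of $L$ (though they may depend on $\eps$). Once this is secured, the argument is the mirror image of the Fekete--M-Z proof: there the Lagrange polynomials of degree $L_\eps > L$ supplied the reverse sampling inequality, whereas here \emph{lowering} the degree to $L_{-\eps}<L$ leaves just enough bandwidth room to insert a smooth localizer $p$ and produce a genuine $E_L$-interpolant with control on its $L^2$ norm.
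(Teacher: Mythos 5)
Your proof is correct and follows essentially the same route as the paper: the same weighted Lagrange interpolant $f_L(z)=\sum_j c_j\,p(z,z_{L_{-\eps}j})\,l_{L_{-\eps}j}(z)$ with a Bochner--Riesz-type localizer $p$ placed in a band of size $\simeq\eps L$ by the product property, followed by the same $\ell^\infty\to L^\infty$ and $\ell^1\to L^1$ endpoint bounds and Riesz--Thorin interpolation. The only cosmetic differences are the normalization of $p$ by $B_{\delta L}(w,w)$ rather than $B(z,z)$ and the small $\delta L$ versus $\delta L_{-\eps}$ bookkeeping, neither of which affects the argument.
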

\begin{proof}
Given any array of values $\left\{v_{L_{-\eps}j}\right\}^{k_{L_{-\eps}}}_{j=1}$, we consider
\[
R_{L}[v](z)=\sum^{k_{L_{-\eps}}}_{j=1}v_{L_{-\eps}j}p(z,z_{L_{-\eps}j})l_{L_{-\eps}j}(z)\in E_L,
\]
where $p(\cdot,z)\in E_{L\eps/C}$ defined in the proof of the previous Theorem. Note that
\begin{align*}
R_{L}[v](z_{L_{-\eps}k})& =\sum^{k_{L_{-\eps}}}_{j=1}v_{L_{-\eps}j}p(z_{L_{-\eps}k},z_{L_{-\eps}j})
l_{L_{-\eps}j}(z_{L_{-\eps}k})\\
& =v_{L_{-\eps}k}p(z_{L_{-\eps}k},z_{L_{-\eps}k})=v_{L_{-\eps}k}.
\end{align*}
Also, as in the proof of the previous theorem we have
\[
\sum^{k_{L_{-\eps}}}_{j=1}|p(z,z_{L_{-\eps}j})|\lesssim 1
\]
and
\[
\int_{M}|p(z,z_{L_{-\eps}j})|dV(z)\lesssim \frac{1}{k_L}.
\]
Thus, as before we have that
\[
|R_L[v](z)|\leq \sup_{j}|v_{L_{-\eps}j}|\sum^{k_{L_{-\eps}}}_{j=1}|p(z,z_{L_{-\eps}j})|
\lesssim \sup_j|v_{L_{-\eps}j}|.
\]
Hence
\[
\|R_L[v]\|_{\infty}\lesssim \sup_{j}|v_{L_{-\eps}j}|.
\]
Also,
\[
\|R_L[v]\|_1\leq \sum^{k_{L_{-\eps}}}_{j=1}|v_{L_{-\eps}j}|\|p(\cdot,z_{L_{-\eps}j})\|_1
\lesssim \frac{1}{k_L}\sum^{k_{L_{-\eps}}}_{j=1}|v_{L_{-\eps}j}|.
\]
By the Riesz-Thorin interpolation theorem we get
\[
\|R_L[v]\|^2_2\lesssim \frac{1}{k_L}\sum^{k_{L_{-\eps}}}_{j=1}|v_{L_{-\eps}j}|^2.
\]
\end{proof}

\subsection{Equidistribuition of the Fekete families}
Now we are ready to prove the equidistribuition of the Fekete points. Since the Fekete 
families are, essentially, interpolating and M-Z, we will make use of the density result, 
proved in the previous section, that gives a necessary condition for interpolation and M-Z. 
In what follows, $\sigma$ will denote the normalized volume measure, i.e. 
$d\sigma=dV/\vol(M)$. Our main result is:
\begin{thm}
Let $\Z=\left\{\Z(L)\right\}_{L\geq 1}$ be any array such that $\Z(L)$ is a set of Fekete 
points of degree $L$ and $\mu_L=\frac{1}{k_L}\sum^{k_L}_{j=1}\delta_{z_{Lj}}$. Then 
$\mu_L$ converges in the weak-$\ast$ topology to the normalized volume measure 
on $M$.
\end{thm}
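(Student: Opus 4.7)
The plan is to combine the two theorems just established. Theorem \ref{FeketeMZ} says the Fekete array becomes M-Z after a small \emph{increase} in degree (by a factor $1+\eps$), while Theorem \ref{Feketeinterp} says it becomes interpolating after a small \emph{decrease} (by a factor $1-\eps$). Feeding these into Theorem \ref{charactdensitygeneral} will produce two-sided bounds on the ratio $\mu_L(B(\xi,R/L))/\sigma(B(\xi,R/L))$ whose upper and lower constants both tend to $1$ as $\eps\to 0$, and a smoothing argument will then upgrade these ball-by-ball estimates to weak-$\ast$ convergence against arbitrary continuous test functions.

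Concretely, fix $\eps>0$. By Theorem \ref{FeketeMZ}, $\Z_\eps=\{\Z(L_\eps)\}_L$ is $L^2$-M-Z, so Theorem \ref{charactdensitygeneral} produces a uniformly separated subfamily $\tilde\Z_\eps\subset\Z_\eps$ with $D^{-}(\tilde\Z_\eps)\geq 1$; since $\tilde\Z_\eps\subset\Z_\eps$ this forces $D^{-}(\Z_\eps)\geq 1$. In parallel, $D^{+}(\Z_{-\eps})\leq 1$. Reindexing from the auxiliary level $L$ to the Fekete level $L':=L_{\pm\eps}\simeq(1\pm\eps)L$, and using the H\"ormander asymptotic $k_L\simeq L^{m}$ from Section \ref{estimatesReprKernel} (which gives $k_L/k_{L'}\to(1\pm\eps)^{-m}$), these two density statements translate into the following bound on the Fekete measure: for every $\eps,\delta>0$ there exist $R_0,L_0$ such that
\[
(1+\eps)^{-m}(1-\delta)\;\leq\;\frac{\mu_L(B(\xi,R/L))}{\sigma(B(\xi,R/L))}\;\leq\;(1-\eps)^{-m}(1+\delta)
\]
holds uniformly in $\xi\in M$ whenever $R\geq R_0$ and $L\geq L_0$.

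To pass to weak-$\ast$ convergence, set $M_rf(y):=\sigma(B(y,r))^{-1}\int_{B(y,r)}f\, d\sigma$. By uniform continuity $\|M_rf-f\|_\infty\to 0$ as $r\to 0$. For $f\in\mathcal{C}(M)$ with $f\geq 0$, Fubini's theorem together with the volume comparison $\sigma(B(y,r))=\sigma(B(x,r))(1+O(r))$ for $d(x,y)\leq r$ (a standard consequence of the smoothness of the exponential map on a compact Riemannian manifold) gives
\[
\int_M M_{R/L}f\, d\mu_L=\int_M f(x)\,\frac{\mu_L(B(x,R/L))}{\sigma(B(x,R/L))}\bigl(1+O(R/L)\bigr)\, d\sigma(x).
\]
Inserting the ball-ratio bound above, letting $L\to\infty$ (so $R/L\to 0$), then $\delta\to 0$, then $\eps\to 0$, and using that $\bigl|\int M_{R/L}f\, d\mu_L-\int f\, d\mu_L\bigr|\leq \|M_{R/L}f-f\|_\infty\to 0$ because $\mu_L$ is a probability measure, one obtains $\int f\, d\mu_L\to\int f\, d\sigma$ for $f\geq 0$. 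The general case $f\in\mathcal{C}(M)$ follows by splitting $f=f^{+}-f^{-}$.

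The one step that demands real bookkeeping is the translation in the second paragraph: a density inequality phrased for the auxiliary family $\Z_{\pm\eps}$ at the pseudo-level $L$ must be rewritten as an estimate on the Fekete counting measure $\mu_{L'}$ at the true Fekete level $L'\simeq(1\pm\eps)L$, and the factor $k_L/k_{L'}\to(1\pm\eps)^{-m}$ has to be tracked carefully to produce the constants $(1\pm\eps)^{-m}$ appearing in the ball-ratio inequality. Everything else --- the passage from $\tilde\Z$ to $\Z$ in Theorem \ref{charactdensitygeneral}, the Fubini averaging, and the $\liminf$/$\limsup$ bookkeeping --- is essentially mechanical once the ball-ratio bound is in hand.
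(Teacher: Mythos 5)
Your proposal is correct and follows the same strategy as the paper: derive two-sided bounds on $\mu_L(B(\xi,R/L))/\sigma(B(\xi,R/L))$ from Theorems \ref{FeketeMZ}, \ref{Feketeinterp} and \ref{charactdensitygeneral}, then upgrade to weak-$\ast$ convergence via Fubini and local volume comparison. Your treatment of the reindexing step from the auxiliary level $L$ to the Fekete level $L_{\pm\eps}$, with the resulting factor $k_L/k_{L_{\pm\eps}}\to(1\pm\eps)^{-m}$, is actually done more explicitly than in the paper, which simply records the density bounds as $1\mp\eps$; both versions are harmless because these constants tend to $1$ as $\eps\to 0$.
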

\begin{proof}
We know that for any $\eps>0$ the array $\Z_{\eps}=\left\{\Z_{\eps}(L)\right\}_{L\geq 1}$ is 
M-Z, so if we use the density results (see Theorem \ref{charactdensitygeneral}), we get for any 
$\eps>0$, a large $R(\eps)$ and $L(R(\eps))$ such that for all $R\geq R(\eps)$ and 
$L\geq L(R(\eps))$ and $\xi\in M$,
\begin{equation}
\frac{\frac{1}{k_L}\#(\Z(L)\cap B(\xi, R/L))}{\sigma(B(\xi,R/L))}\geq (1-\eps).
\end{equation}
Similarly, since $\Z_{-\eps}$ is interpolating (because $\Z$ is a family of Fekete) we know 
that there exist $R(\eps)$ and $L(R(\eps))$ such that for all $R\geq R(\eps)$ and 
$L\geq L(\eps)$ and $\xi\in M$,
\begin{equation}
\frac{\frac{1}{k_L}\#(\Z(L)\cap B(\xi, R/L))}{\sigma(B(\xi,R/L))}\leq (1+\eps).
\end{equation}
Note that
\[
\mu_L(B(\xi,R/L))=\frac{1}{k_L}\#(\Z(L)\cap B(\xi,R/L)).
\]
Thus, for any $\eps>0$ there is a large $R$ such that for any $L$ big enough and $\xi\in M$,
\begin{equation}
(1-\eps)\sigma(B(\xi,r_L))\leq \mu_L(B(\xi,r_L))\leq (1+\eps)\sigma(B(\xi,r_L)),
\end{equation}
where $r_L=R/L$. Hence, we have that
\begin{equation}\label{quotientmuLandsigma}
\lim_{L\to\infty}\frac{\mu_L(B(z,r_L))}{\sigma(B(z,r_L))}=1,\quad r_L\to 0,
\end{equation}
uniformly in $z\in M$. This is enough to prove the equidistribuition of the Fekete points. 
We proceed now with the details. Let $f\in\mathcal C(M)$. We will use the notation
\[
\nu(f):=\int_{M}f(z)d\nu(z),
\]
where $\nu$ is a measure and $f_r$ will denote the mean of $f$ over a ball $B(z,r)$ 
with respect to the volume measure, i.e.
\[
f_{r}(z)=\frac{1}{\sigma(B(z,r))}\int_{B(z,r)}f(w)d\sigma(w).
\]
We want to show that $\mu_L(f)\to \sigma(f)$, when 
$L\to\infty$, for all $f\in\mathcal C(M)$.
\begin{align*}
|\mu_L(f)-\sigma(f)|& =|(\mu_L-\sigma)(f-f_{r_L})|+|(\mu_L-\sigma)(f_{r_L})|\\
& \leq (\mu_L(M)+\sigma(M))\|f-f_{r_L}\|_{\infty}+|(\mu_L-\sigma)(f_{r_L})|\\
& \leq 2\|f-f_{r_L}\|_{\infty}+|(\mu_L-\sigma)(f_{r_L})|.
\end{align*}
We will estimate the second term using \cite[Lemma 2]{Blum} that says
\begin{equation}\label{volumquocienteuclid}
\sup_{z\in M}\left|\frac{\sigma(B(z,r))}{|\mathbb B(0,cr)|}-1\right|=O(r^2),
\end{equation}
uniformly in $z\in M$, where $c$ is a constant depending only on the manifold and $|\cdot|$ denotes the Euclidean volume. Similarly, one has
\begin{equation}\label{euclidquocientvolum}
\sup_{z\in M}\left|\frac{|\mathbb B(0,cr)|}{\sigma(B(z,r))}-1\right|=O(r^2),
\end{equation}
because, by the compactness of $M$,
\begin{equation}\label{quotientboundvolumeuclid}
C_1\leq \frac{\sigma(B(z,r))}{|\mathbb B(0,cr)|}\leq C_2,
\end{equation}
thus,
\[
\left|\frac{|\mathbb B(0,cr)|}{\sigma(B(z,r))}-1\right|=
\left|\frac{1-\frac{\sigma(B(z,r))}{|\mathbb B(0,cr)|}}{\frac{\sigma(B(z,r))}{|\mathbb B(0,cr)|}}\right|
\leq \frac{Cr^2}{C_1}=O(r^2).
\]
Similarly,
\begin{equation}\label{sigmaquocientsigma}
\sup_{w,z\in M}\left|\frac{\sigma(B(w,r))}{\sigma(B(z,r))}-1\right|=O(r^2).
\end{equation}
Using Fubini, we obtain:
\begin{align*}
|(\mu_L-\sigma)(f_{r_L})|&
\leq \int_{M}|f(w)|\left|\int_{B(w,r_L)}\frac{d\mu_L(z)-d\sigma(z)}{\sigma(B(z,r_L))}\right|d\sigma(w)
\end{align*}
Now we deal with the second integral.
\begin{align*}
& \int_{B(w,r_L)}\frac{d\mu_L(z)-d\sigma(z)}{\sigma(B(z,r_L))}=
\int_{B(w,r_L)}\frac{d\mu_L(z)-d\sigma(z)}{\sigma(B(w,r_L))}
\frac{\sigma(B(w,r_L))}{\sigma(B(z,r_L))}\\
&=\int_{B(w,r_L)}\frac{d\mu_L(z)-d\sigma(z)}{\sigma(B(w,r_L))}+
\int_{B(w,r_L)}\frac{d\mu_L(z)-d\sigma(z)}{\sigma(B(w,r_L))}
\left(\frac{\sigma(B(w,r_L))}{\sigma(B(z,r_L))}-1\right)
\end{align*}
Thus,
\begin{align*}
&\left|\int_{B(w,r_L)}\frac{d\mu_L(z)-d\sigma(z)}{\sigma(B(z,r_L))}\right|\leq 
\frac{1}{\sigma(B(w,r_L))}\left|\mu_L(B(w,r_L))-\sigma(B(w,r_L))\right|\\
&+\int_{B(w,r_L)}\frac{1}{\sigma(B(w,r_L))}
\left|\frac{\sigma(B(w,r_L))}{\sigma(B(z,r_L))}-1\right|d\mu_L(z)+d\sigma(z).
\end{align*}
Hence, using \eqref{sigmaquocientsigma},
\begin{align*}
& |(\mu_L-\sigma)(f_{r_L})| \leq \sup_{w\in M}\left|\frac{\mu_L(B(w,r_L))}{\sigma(B(w,r_L))}-1\right|\|f\|_1\\
&+\sup_{z,w\in M}\left|\frac{\sigma(B(w,r_L))}{\sigma(B(z,r_L))}-1\right|\int_{M}|f(w)|
\left(\frac{\mu_L(B(w,r_L))}{\sigma(B(w,r_L))}+1\right)d\sigma(w)\\
&\leq \|f\|_1\left(\sup_{w\in M}\left|\frac{\mu_L(B(w,r_L))}{\sigma(B(w,r_L))}-1\right|+
Cr^2_L\left(\sup_{w\in M}\left|\frac{\mu_L(B(w,r_L))}{\sigma(B(w,r_L))}\right|+1\right)\right).
\end{align*}
Briefly, we have obtained
\begin{align*}
&|\mu_L(f)-\sigma(f)|\leq 2\|f-f_{r_L}\|_{\infty}\\
&+\|f\|_1\left(\sup_{w\in M}\left|\frac{\mu_L(B(w,r_L))}{\sigma(B(w,r_L))}-1\right|+
Cr^2_L\left(\sup_{w\in M}\left|\frac{\mu_L(B(w,r_L))}{\sigma(B(w,r_L))}\right|+1\right)\right)
\end{align*}
Letting $L\to\infty$ and using \eqref{quotientmuLandsigma}, we obtain the desired result:
\[
\mu_L(f)\to\sigma(f),\quad L\to\infty,\forall f\in \mathcal C(M).
\]
\end{proof}

\subsection{Examples of manifolds}

The basic examples are the compact two-point homogeneous spaces. These spaces, 
essentially are $\mathbb S^m$, the projective spaces over the field $\field=\mathbb R,
\mathbb C,\mathbb H$ and the Cayley Plane. In these spaces we can multiply two 
functions of the spaces $E_L$ and obtain another function of some bigger space 
$E_{L}$. Indeed, in the case of the Sphere, $E_L$ represents the spherical harmonics 
of degree less than $L$, usually denoted by $\Pi_L$. In such spaces, we know that
\[
\Pi_{2L}=\text{span}\Pi_L\Pi_L,
\]
(see \cite[Lemma 4.5]{MarThesis}). Moreover, in $\mathbb S^m$,
\[
\Pi_L\cdot \Pi_{\eps L}\subset \Pi_{L(1+\eps)}.
\]
Thus, the product property holds trivially in $\mathbb S^m$.\\
\\
\textbf{Projective Spaces.}\\
\\
The case of the Projective spaces is similar to the Sphere. In \cite[Sections 3.2 and 3.3]{Shat}, 
there is a description and an orthogonal decomposition of the harmonic polynomials on the 
projective spaces.\\
\\
Let $\field$ be the field of $\R$, $\C$ or $\mathbb H$. Consider the sphere $\mathbb S^{m-1}
\subset \field^{m}\approx \R^{d m}$, where $d=\textup{dim}_{\R}\field$. We 
define the projective space $\field\mathbb P^{m-1}$ over the field $\field$ (of dimension 
$m-1$) as the quotient
\[
\field\mathbb P^{m-1}=\mathbb S^{m-1}/\sim,
\]
where $x\sim y$ if and only if $y=\gamma x$ with $\gamma\in\field$ and $|\gamma|=1$. Consider 
the space of homogeneous polynomials of degree less than $L$ on the projective spaces:
\[
\textup{Pol}_{L}=\left\{p(x)|_{\mathbb S^{m-1}};\ x\in \R^{d m}, \textup{deg}(p)\leq L, 
p(\gamma x)=|\gamma|^Lp(x), \forall \gamma\in\field\right\}.
\]
It is immediate that $\textup{Pol}_L$ verify the product property \eqref{productproperty}. We 
will show that the spaces $E_L$ associated to $\field\mathbb P^{m-1}$ are identified with 
the spaces $\textup{Pol}_L$. This proves that the projective spaces are admissible. It is 
observed in \cite[Section 3.2]{Shat}, that $\textup{Pol}_{L}$ coincide with its subspace 
of harmonic polynomials of degree less than $L$:
\[
\textup{Pol}_{L}=\textup{Harm}_{L}=\left\{p\in \textup{Pol}_{L};\ \Delta_{\R^{d m}}p\equiv 0\right\}
\]
and an orthogonal decomposition holds:
\[
\textup{Harm}_{L}=\textup{Harm}(0)\oplus\textup{Harm}(2)\oplus 
\ldots \oplus\text{Harm}(2[L/2]),
\]
where $\textup{Harm}(2k)$ is the subspace of $\textup{Pol}_L$ of harmonics of degree $2k$. 
We claim that the spaces $E_L$ associated to the projective spaces are identified with the spaces 
$\textup{Harm}_{L}$. Thus, we need to show that $\textup{Harm}(2k)$ are the eigenspaces of 
$\Delta_{\field\mathbb P^{m-1}}$. For this purpose, it is sufficient to prove that its reproducing 
kernel, $f(x,y)$, is an eigenfunction because then for any $Y\in\text{Harm}(2k)$,
\begin{align*}
\Delta_{\field\mathbb P^{m-1}}Y(x)& =\Delta_{\field\mathbb P^{m-1}}\langle Y,f(x,\cdot)\rangle
=\langle Y, \Delta_{\field\mathbb P^{m-1}}f(x,\cdot)\rangle\\
&=-\lambda^2 \langle Y, f(x,\cdot)\rangle=-\lambda^2 Y(x).
\end{align*}
Let $h_{2k}$ be the dimension of $\text{Harm}(2k)$ and $(s_{ki})^{h_{2k}}_{I=1}$
be an orthonormal basis in $\text{Harm}(2k)$. Its kernel, can be expressed as the 
function
\[
f(x,y)=\sum^{h_{2k}}_{i=1}\overline{s_{ki}(x)}s_{ki}(y),\quad x,y\in \mathbb S^{m-1}.
\]
It is proved, in \cite[Section 3.3]{Shat}, that $f(x,y)$ is a function of $|\langle x,y\rangle|^2$,
\[
f(x,y)=q_{k}(|\langle x,y\rangle|^2),
\]
where $q_{k}$ is a function $[0,1]\to\C$. Moreover, in \cite[Section 3.3]{Shat}, we can find 
an explicit form of this function:
\[
\sum^{h_{2k}}_{i=1}\overline{s_{ki}(x)}s_{ki}(y)=
b^{d}_{k}P^{(\alpha,\beta)}_{k}(2|\langle x,y\rangle|^2-1)
=b^{d}_{k}P^{(\alpha,\beta)}_{k}(\cos(\rho(x,y))),
\]
where $\rho$ is the geodesic distance, $b^{d}_{k}$ is a constant of normalization and
\[
\alpha=\frac{d m-d-2}{2},\quad \beta=\frac{d-2}{2},\quad d=\text{dim}_{\R}\field.
\]
Note that, since the reproducing kernel $f(x,y)$ depends only on $|\langle x,y\rangle|^2$, 
we only need to take account of the radial part of the Laplacian, i.e.
\begin{equation}\label{radialpart}
\frac{1}{A(r)}\frac{\partial}{\partial r}\left(A(r)\frac{\partial}{\partial r}\right),
\end{equation}
where $A(r)=c'\sin^{d(m-2)}(r/\sqrt{2})\sin^{d-1}(\sqrt{2}r)$ (see \cite[p. 168]{Rag1}). 
Since we want to calculate the radial part of the Laplacian of functions of the form 
$f(\cos(\sqrt{2}r))$, we will make a change of variable $t=\cos(\sqrt{2}r)$ in \eqref{radialpart}. 
We proceed with the details taking into account these basic identities:
\[
\sin(\theta/2)=\pm\sqrt{\frac{1-\cos(\theta)}{2}},\quad \sin(\arccos(x))=\sqrt{1-x^2}.
\]
\begin{align*}
A(r) & =c'\sin^{d(m-2)}(\sqrt{2}r/2)\sin^{d-1}(\sqrt{2}r)\\
&=c'(1-\cos(\sqrt{2}r))^{\frac{d (m-2)}{2}}\sin^{d-1}(\arccos(t))\\
& =c'(1-t)^{\frac{d(m-2)}{2}}(1-t^2)^{\frac{d-1}{2}}
=c'(1-t)^{\frac{d(m-1)-1}{2}}(1+t)^{\frac{d-1}{2}}.
\end{align*}
Now the radial part of the Laplacian can be written also in the variable $t$ and 
it turns out to be:
\begin{align*}
& \frac{1}{A(r)}\frac{\partial}{\partial r}\left(A(r)\frac{\partial}{\partial r}\right)\\
&=c'(1-t)^{-\frac{d(m-1)-2}{2}}(1+t)^{-\frac{d-2}{2}}
\frac{\partial}{\partial t}\left((1-t)^{\frac{d(m-1)}{2}}
(1+t)^{\frac{d}{2}}\frac{\partial}{\partial t}\right).
\end{align*}
Thus, defining
\[
\alpha=\frac{d(m-1)-2}{2},\quad \beta=\frac{d-2}{2},
\]
we get that the radial part of the Laplacian is of the form
\[
c'(1-t)^{-\alpha}(1+t)^{-\beta}\frac{\partial}{\partial t}
\left((1-t)^{\alpha+1}(1+t)^{\beta+1}\frac{\partial}{\partial t}\right).
\]
It is well known (see \cite{Szego}) that the precise eigenfunctions 
of this operator are the Jacobi polynomials $P^{(\alpha,\beta)}(t)$ with eigenvalues 
$-k(k+\alpha+\beta+1)=-k(k+d m/2-1)$.\\
\\
Observe that since the polynomials are dense in $L^2(\field\mathbb P^{m-1})$, 
\[
L^{2}(\field\mathbb P^{m-1})=\bigoplus_{l\geq 0}\textup{Harm}(2l),
\]
For further details check \cite[Page 87]{Rag2}. Therefore, we know that all the eigenvalues of 
$\Delta_{\field\mathbb P^{m-1}}$ are of the form $-k(k+d m/2-1)$. A simple calculation 
shows that the spaces $E_L$ in the projective spaces are identified with the space 
of spherical harmonics (of the projective spaces) with degree less than $L$. More precisely,
\[
E_L=\textup{Harm}_{L^{\ast}}=\bigoplus^{[L^{\ast}/2]}_{l=0}\textup{Harm}(2l)=\textup{Pol}_{L^{\ast}},
\]
where $L^{\ast}=\sqrt{(d m/2-1)^2+4L^2}-(d m/2-1)>0$ for $L\geq (d m-2)/3$(note 
that $\frac{L^{\ast}}{2L}\to 1$, as $L\to\infty$). Therefore, $E_L$ satisfies the product property 
\eqref{productproperty} because the spaces $\textup{Pol}_{L^{\ast}}$ verify it. As a consequence, 
the projective spaces $\field \mathbb P^{m-1}$ are admissible.\\
\\
\noindent\textbf{Other examples.}\\
\\
Another example with a different nature is the Torus, represented as the unit rectangle $[0,1]\times [0,1]$ with the identification 
$(x,y)\sim (x,y+1)$ and $(x,y)\sim (x+1,y)$.\\
The eigenfunctions of the Laplacian are of the form $e^{2\pi i(mx+ny)}$ with $m,n\in\mathbb N$. 
Now we are ready to prove the product property. Let $f_1\in E_{L}$, i.e. $f_1$ is a linear 
combination of eigenvectors of eigenvalues less than $L^2$, i.e. we are taking pairs $(n,m)$ 
such that
\[
4\pi^2\left(n^2+m^2\right)\leq L^2,
\]
and let $f_2$ be a linear combination of eigenvectors of eigenvalue less than $\eps^2L^2$ 
($0<\eps<1$), i.e. we are taking pairs $(k,l)$ such that
\[
4\pi^2\left(k^2+l^2\right)\leq \eps^2L^2,
\]
We can compute the product of $f_1$ and $f_2$:
\[
f_1(x,y)f_2(x,y)=\sum_{n,m,k,l}c_{n,m}d_{k,l}e^{i2\pi((n+k)y+x(m+l))}.
\]
Thus, we have eigenvalues
\[
V^2:=4\pi^2\left((n+k)^2+(m+l)^2\right).
\]
We will estimate $V$ by computing $(a+b)^2$ and using the fact that
\[
\begin{cases}
	n,m\leq \frac{L}{2\pi}\\
	k,l\leq \frac{\eps L}{2\pi}\\
	\sqrt{1+x}\leq 1+x/2,\quad \forall x\geq 0\\
\end{cases}.
\]
Then we get that $V^2\leq L^2(1+\eps^2+4\eps)\leq L^2(1+5\eps)$. 
Hence, $V\leq L\sqrt{1+5\eps}\leq L(1+5/2\eps)$. Therefore, a Torus is admissible.\\
\\
Similar computations show that the Klein bottle is also admissible.\\
\\
\textbf{Product of admissible manifolds.}\\
\\
More examples can be constructed by taking products of manifolds that satisfy 
the pro\-duct assumption because if $f_1$ and $f_2$ are functions defined on two 
manifolds $M$ and $N$, respectively, then
\[
\Delta_{M\times N}(f_1\cdot f_2)=f_2\Delta_{M}f_1+f_1\Delta_N(f_2).
\]
More precisely, let $M$ and $N$ be admissible manifolds, i.e.
\[
E^{M}_{L}\cdot E^{M}_{\eps L}\subset E^{M}_{L(1+C_1\eps)},
\]
and
\[
E^{N}_{mL}\cdot E^{N}_{\eps L}\subset E^{N}_{L(1+C_2\eps)},
\]
where
\[
E^{M}_L=\langle \left\{\phi_i;\quad \Delta_M\phi_i=-\lambda^2_i\phi_i, \lambda_i\leq L\right\}\rangle,
\]
and
\[
E^{N}_L=\langle \left\{\psi_i;\quad \Delta_N\psi_i=-\mu^2_i\psi_i, \mu_i\leq L\right\}\rangle.
\]
Thus, if we consider the product manifold $M\times N$, then
\[
E^{M\times N}_L=\langle \left\{\phi_i\psi_j;\quad \lambda^2_i+\mu^2_j\leq L^2\right\}\rangle,
\]
It is a straightforward computation that $M\times N$ satisfies the condition of admissibility:
\[
E^{M\times N}_{L}\cdot E^{M\times N}_{\eps L}\subset E^{M\times N}_{L(1+C\eps)}
\]
with $C=2\max(C_1,C_2)$.
\begin{remark}
Note that the example of the torus can be reduced to this later case because it is the product of 
two $\mathbb S^1$.
\end{remark}

\def\cprime{$'$}
\providecommand{\bysame}{\leavevmode\hbox to3em{\hrulefill}\thinspace}
\providecommand{\MR}{\relax\ifhmode\unskip\space\fi MR }
\providecommand{\MRhref}[2]{%
  \href{http://www.ams.org/mathscinet-getitem?mr=#1}{#2}
}
\providecommand{\href}[2]{#2}

\end{document}